\DeclareSymbolFont{cyrletters}{OT2}{wncyr}{m}{n}
\DeclareMathSymbol{\Sha}{\mathalpha}{cyrletters}{"58}
\font\teneufm=eufm10 \font\seveneufm=eufm7
\font\fiveeufm=eufm5
\def\GG{\mathbb{G}}
\def\1{\mbox{\bf 1}}
\def\type{\mathrm{\bf type}}
 \DeclareMathOperator{\Hom}{Hom}
\DeclareMathOperator{\Aut}{Aut}
\DeclareMathOperator{\Autext}{Autext}
\DeclareMathOperator{\Int}{Int}
\DeclareMathOperator{\Isom}{Isom}
\DeclareMathOperator{\Isomext}{Isomext}
\DeclareMathOperator{\Isomint}{Isomint}
\DeclareMathOperator{\im}{im} 
\DeclareMathOperator{\End}{End} \DeclareMathOperator{\Id}{Id}
\DeclareMathOperator{\Ind}{Ind}
\DeclareMathOperator{\SU}{\rm SU}
\DeclareMathOperator{\GL}{\rm GL}
\DeclareMathOperator{\SL}{\rm SL}
\newtheorem{theorem}{Theorem}[subsection]
\newtheorem{corollary}[theorem]{Corollary}
\newtheorem{lemma}[theorem]{Lemma}
\newtheorem{proposition}[theorem]{Proposition}
\newtheorem{stheorem}{Theorem}[section]
\newtheorem{scorollary}[stheorem]{Corollary}
\newtheorem{slemma}[stheorem]{Lemma}
\newtheorem{sproposition}[stheorem]{Proposition}
\newtheorem{sremark}[stheorem]{Remark}
\newtheorem{sexamples}[stheorem]{Examples}
\theoremstyle{definition}
\newtheorem{remark}[theorem]{Remark}
\newtheorem{remarks}[theorem]{Remarks}
\numberwithin{equation}{section}
\def\ZZ{\mathbb{Z}}
\def\CC{\mathbb{C}}
\def\QQ{\mathbb{Q}}
\def\C{\mathbb C}
\def\R{\mathbb R}
\def\2int{\mathop{2\int}\nolimits}
\def\Dyn{\mathrm{\bf Dyn}}
\def\uDyn{\underline{\mathrm{Dyn}}}
\def\Transpt{\mathrm{Transpt}}
\def\uPsi{\underline{\Psi}}
\def\End{\mathop{\rm  End}\nolimits}
\def\Spec{\mathop{\rm Spec}\nolimits}
\def\Hom{\mathop{\rm Hom}\nolimits}
\def\Ind{\mathop{\rm Ind}\nolimits}
\def\Gal{\mathop{\rm Gal}\nolimits}
\def\Int{\mathop{\rm Int}\nolimits}
\def\Ind{\mathop{\rm Ind}\nolimits}
\def\Coind{\mathop{\rm Coind}\nolimits}
\def\Br{\mathop{\rm Br}\nolimits}
\def\Aut{\text{\rm{Aut}}}
\def\Int{\mathop{\rm Int}\nolimits}
\def\Isom{\mathop{\rm Isom}\nolimits}
\def\trd{\mathop{\rm trd}\nolimits}
\def\resp.{\mathop{\rm resp.}\nolimits}
\def\Ker{\mathop{\rm Ker}\nolimits}
\def\lgr{\longrightarrow}
\def\la{\longleftarrow}
\font\math=cmmi10
\def\varpi{\hbox{\math\char'44}}
\def\simlgr{\buildrel\sim\over\lgr}
\def\simla{\buildrel\sim\over\la}
\def\pa{\S\kern.15em }
\def\un{\uppercase\expandafter{\romannumeral 1}}
\def\deux{\uppercase\expandafter{\romannumeral 2}}
\def\trois{\uppercase\expandafter{\romannumeral 3}}
\def\quatre{\uppercase\expandafter{\romannumeral 4}}
\def\cinq{\uppercase\expandafter{\romannumeral 5}}
\def\six{\uppercase\expandafter{\romannumeral 6}}
\def\cskip{ \hskip -0.6 em}
\def\dskip{ \hskip -0.8 em}
\def\ccskip{ \hskip -1.2 em}
\def\hfl#1#2#3{\smash{\mathop{\hbox to#3{\rightarrowfill}}\limits
^{\scriptstyle#1}_{\scriptstyle#2}}}
\def\gfl#1#2#3{\smash{\mathop{\hbox to#3{\leftarrowfill}}\limits
^{\scriptstyle#1}_{\scriptstyle#2}}}
\begin{document}

\title[Maximal tori]{On maximal tori of algebraic groups of type $G_2$ }

\author{C. Beli}
\address{
   Institute of Mathematics Simion Stoilow of the Romanian Academy,
  Calea Grivitei 21,
 RO-010702 Bucharest, Romania.
}
\email{Beli.Constantin@imar.ro}

\author{P. Gille}
\address{
   Institute of Mathematics Simion Stoilow of the Romanian Academy,
  Calea Grivitei 21,
 RO-010702 Bucharest, Romania.
}
\thanks{C. Beli and P. Gille were supported by the Romanian IDEI project PCE$_{-}$2012-4-364 of the Ministry of National Education
CNCS-UEFISCIDI}
\email{pgille@imar.ro}

\author{T.-Y. Lee}
\address{Ecole Polytechnique F\'ed\'erale de Lausanne,
EPFL, Station 8,
CH-1015 Lausanne, Switzerland.
}
\email{ting-yu.lee@epfl.ch}
\date{\today}

\begin{abstract}
\noindent Given an octonion algebra  $C$ over a field $k$,
its automorphism group is an algebraic  semisimple $k$--group of type $G_2$.
We study the maximal tori of $G$ in terms of the algebra $C$.

\medskip

\noindent {\em Keywords:} Octonions, tori, Galois cohomology, homogeneous spaces.

\medskip

\noindent {\em MSC 2000:} 20G15, 17A75, 11E57, 20G41.
\end{abstract}

\maketitle

\section{Introduction} \label{sec_intro}

For classical algebraic  groups, and in particular for arithmetic fields, the investigation
of maximal tori is an interesting topic in the theory of algebraic groups and
arithmetic groups, see the papers of Prasad-Rapinchuk \cite[\S 9]{PR1}, \cite{PR2}, and also
Garibaldi-Rapinchuk \cite{GR}.
It is also related to the Galois cohomology of quasi-split semisimple groups
by Steinberg's section theorem; that connection is an important ingredient of the paper.

Let $k$ be a field, let $k_s$ be a separable closure and denote by $\Gamma_k=\Gal(k_s/k)$
the absolute Galois group of $k$.
 In this paper, we study maximal tori of groups of type $G_2$.
We recall that a semisimple algebraic $k$-group $G$ of type $G_2$ is
the group of automorphisms of a unique octonion algebra $C$ \cite[33.24]{KMRT}.
We come now to the following invariant of maximal tori \cite{G2,R}.
Given a $k$--embedding of $i: T \to G$ of a rank two torus,  we have a natural action
of $\Gamma_k$ on the root system $\Phi\bigl(G_{k_s}, i(T_{k_s}) \bigr)$ and the yoga of twisted forms
defines then a cohomology class   $\type(T,i) \in H^1(k,W_0)$
which is called the type of the couple $(T,i)$.
Here  $W_0\cong \ZZ/2\ZZ \times S_3$ is the Weyl group of the Chevalley group of type $G_2$.
By Galois descent \cite[29.9]{KMRT}, a  $W_0$-torsor is nothing but a couple $(k',l)$ where  $k'$ (resp. \ccskip $l$) is
a quadratic (resp. \ccskip cubic) \'etale $k$--algebra.
The main problem is then the following: Given an octonion algebra $C$ and such a couple $(k',l)$,
under which additional  conditions is there a $k$--embedding $i: T \to G=\Aut(C)$ of type   $[(k',l)] \in H^1(k,W_0)$ ?

\smallskip

We give a precise answer when the cubic extension $l$ is not a field (\S \ref{subsec_biquad}).
When $l$ is a field, we use subgroups of type $A_2$ of $G$ to relate with maximal tori of
special unitary groups where we can apply results of Knus, Haile, Rost  and Tignol \cite{HKRT}.
This provides indeed a criterion which is quite complicated (\ref{prop_criterion}).

The problem above can be formulated in terms of existence of $k$--points
for a certain homogeneous space $X$ under $G$ associated to $k',l$, see \cite[\S 1]{Le} or \S \ref{subsec_var}.
We recall here Totaro's general question \cite[question 0.2]{To}.

\medskip

{\it  For  smooth connected affine $k$--group  $H$
over the  field $k$ and a homogeneous $G$-variety $Y$ such that $Y$
has a zero-cycle of degree $d >0$, does
$Y$ necessarily have a closed  \'etale point of degree dividing $d$?
}

\medskip

Starting with the Springer's odd extension theorem for quadratic forms,
there are several cases where the question has a positive answer, mainly for principal homogeneous spaces (i.e. torsors).
We quote here the results
by Totaro \cite[th. 5. 1]{To} and  Garibaldi-Hoffmann \cite{GH} for certain exceptional groups, Black \cite{B} for classical adjoint groups
and  Black-Parimala \cite{BP} for semisimple simply connected classical groups of rank $\leq 2$.

If the base field $k$ is  large enough  (e.g. $\QQ(t)$, $\QQ((t)))$, we can construct an homogeneous space $X$ under $G$ of the shape above
having a quadratic point and a cubic point but no $k$--point (Th. \ref{theo_cycle}). This provides a new class of counterexamples
to  the  question in the case $d=1$ which are geometrically speaking simpler than those of Florence \cite{Fl}
and Parimala \cite{Pa}.

\medskip

Finally, in case  of a number  field, we show that this kind of varieties
satisfies the Hasse principle. In this case, our results are effective, that is
we can describe the type of the maximal tori of  a given group of type $G_2$, for
example for the ``compact'' $G_2$ over the rational numbers (see Example \ref{ex_eff}).

\bigskip

Let us review the contents of the paper. In section \ref{sec_image}, we
recall the notion of type and oriented type for a $k$--embedding
 $i: T \to G$ of a maximal $k$--torus in a reductive $k$--group
$G$. We study then  the  image of the map $H^1(k,T) \to H^1(k,G)$ of Galois cohomology
and relate in the quasi-split case with Steinberg's theorem on Galois cohomology.
Section \ref{sec_octonions} gathers basic facts on octonion algebras which are used in
the core of the paper, namely  sections \ref{sec_embed} and \ref{sec_hermitian}.
The number field case is considered in the short section \ref{sec_hasse}.
Finally, Appendix \ref{sec_tori} deals with the Galois cohomology of of $k$-tori
and quasi-split reductive  $k$--groups over  Laurent series fields.

\medskip

 A. Fiori investigated independently maximal tori of algebraic groups of type $G_2$ \cite{Fi}
and their rational conjugacy classes. Though his scope is different,
certain tools are common with our paper, for example the definition and the study of the subgroup of type $A_2$ attached
to a maximal torus (proposition 5.5  in \cite{Fi}, \S 5.1 here).

\medskip

\noindent{\bf Acknowledgements.} We express our thanks to  Andrei Rapinchuk and to the referee for  valuable comments.
We thank Alexander  Merkurjev for pointing us some  application to Bruhat-Tits' theory (prop. \ref{prop_BT}).

\bigskip

\section{Maximal tori of reductive groups and image of the cohomology}\label{sec_image}

Let $G$ be a reductive $k$--group.
We are interested in maximal  subtori of $G$ and also in the images
of the map $H^1(k,T) \to H^1(k,G)$. We shall discuss refinements of the application ot
 Steinberg's theorem on rational conjugacy classes to Galois cohomology.

\subsection{Twisted root data}
\subsubsection{Definition} In the papers \cite[0.3.5]{Le} and \cite[6.1]{G3} in the spirit of  \cite{SGA3}, the notion of twisted root datas is defined over
an arbitrary base scheme $S$. We focus here on the case of the base field $k$ and
use the equivalence of categories between \'etale sheaves over $\Spec(k)$
and the category of Galois sets, namely sets equipped with a continuous  action of the absolute Galois group
$\Gamma_k$.

We recall from \cite[7.4]{Sp} that a root datum is a quadruple $\Psi=(M,R, M^\vee, R^\vee)$ where $M$ is a lattice, $M^\vee$ its dual,
$R \subset M$ a finite subset (the roots) , $R^\vee$ a finite subset of $M^\vee$ (the  coroots) and a bijection
$\alpha \mapsto \alpha^\vee$ of $R$ onto $R^\vee$ which satisfy the next axioms (RD1) and (RD2).

For each $\alpha \in R$, we define endomorphisms $s_\alpha$ of $M$ and
$s^\vee_\alpha$ of $M^\vee$ by
$$
s_\alpha(m) = m - \langle m, \alpha^\vee\rangle \,  \alpha ; \enskip
s^\vee_\alpha(f) = f - \langle  \alpha, f \rangle \,  \alpha^\vee \quad ( m \in M, f \in M^\vee).
$$
\noindent (RD1) For each $\alpha \in R$, $\langle \alpha, \alpha^\vee \rangle=2$;

\smallskip

\noindent (RD2) For each $\alpha \in R$, $s_\alpha(R)=R$ and  $s^\vee_\alpha(R^\vee)=R^\vee$.

\medskip

We denote by $W(\Psi) $ the subgroup of $\Aut(M)$ generated by
the $s_\alpha$, it is called the Weyl group of $\Psi$.

\subsubsection{Isomorphisms, orientation}
An isomorphism of root data $\Psi_1=(M_1,R_1, M_1^\vee, R_1^\vee) \simlgr \Psi_2=(M_2,R_2, {M_2}^\vee, {R_2}^\vee)$
consists in an isomorphism $f: M_1 \simlgr M_2$ such that $f$ (resp. \cskip $f^\vee$) induces
a bijection $R_1 \simlgr R_2$ (resp. \cskip ${R_2}^\vee \simlgr R_1^\vee$). Let $\Isom( \Psi_1, \Psi_2)$ be the scheme of isomorphisms between $\Psi_1$
and $\Psi_2$.
We define the quotient $\Isomext(\Psi_1, \Psi_2)$ by
$\Isomext(\Psi_1, \Psi_2)=  W(\Psi_2) \backslash \Isom( \Psi_1, \Psi_2)$ which is isomorphic to $\Isom( \Psi_1, \Psi_2)/ W(\Psi_1)$.

An orientation $u$ between $\Psi_1$ and $\Psi_2$ is an element $u \in \Isomext(\Psi_1, \Psi_2)$.
We can then define the set $\Isomint_u(\Psi_1, \Psi_2)$  of inner automorphisms with respect to the orientation
$u$ as the preimage of $u$ by the projection $\Isom( \Psi_1, \Psi_2) \to \Isomext( \Psi_1, \Psi_2)$.

We denote by $\Aut(\Psi)= \Isom( \Psi, \Psi)$ the group of automorphisms of the root datum $\Psi$ and we have an exact sequence
$$
1 \to W(\Psi)  \to \Aut(\Psi) \to \Autext(\Psi) \to 1
$$
where $\Autext(\Psi) = \Isomext(\Psi, \Psi)$ stands for the quotient group of automorphisms of $\Psi$
(called  the group of exterior or outer automorphisms of $\Psi$).
The choice of an ordering on the roots permits to define a set of positive roots $\Psi_+$,
its basis and  the Dynkin index $\mathrm{Dyn}(\Psi)$ of $\Psi$.
Furthermore, we have an isomorphism $\Aut(\Psi, \Psi_+) \simlgr \Autext(\Psi)$ so that the above sequence
is split.

\subsubsection{Twisted version}

 A twisted root datum is a root datum equipped with a continuous action of $\Gamma_k$.
 To distinguish with the absolute case, we shall use the notation $\underline \Psi$.
The Weyl group $W(\uPsi)$ is then a finite group equipped with an action of $\Gamma_k$.
If $\uPsi_1$, $\uPsi_2$ are two twisted root data,
the sets $\Isom(\uPsi_1, \uPsi_2)$, $\Isomext(\uPsi_1, \uPsi_2)$ are Galois sets.
An orientation between $\uPsi_1$, $\uPsi_2$ is an element $u \in \Isomext(\uPsi_1, \uPsi_2)(k)$
and the set $\Isomint_u(\uPsi_1, \uPsi_2)$ is then a Galois set.

\subsection{Type of a maximal torus}
We denote by $G_0$ the  split form of $G$. We denote by $T_0$ a maximal
$k$-split torus of $G_0$ and by $\Psi_0=\Psi(G_0,T_0)$ the associated
root datum.  We denote by $W_0$ the Weyl group of
$\Phi_0$ and by $\Aut(\Psi_0)$ its automorphism group.

Let $i: T \to G$ be a $k$--embedding as a maximal torus.
The root datum  $\uPsi(G, i(T))= \Psi(  G(T)_{k_s}, i(T)_{k_s})$ is
equipped with an action of the  absolute Galois group
$\Gamma_k$, so defines a twisted root datum.
It is a $k$--form of the constant root datum $\uPsi_0$  and we define
the type of $(T,i)$ as the isomorphism class of
$$
\bigl[ \uPsi(G, i(T)) \bigr] \in H^1(k, \Aut(\Psi_0)).
$$
Recall that  by Galois descent, those $k_s/k$--forms are
classified by the Galois cohomology pointed set $H^1( k, \Aut(\Psi_0))$.

If two embeddings $i$, $j$ have the same image, then $\type(T,i)=  \type(T,j) \in H^1(k, \Aut(\Psi_0))$.
If we compose $i: T \to G$  by an automorphism $f \in \Aut(G)(k)$,
we have  $\type(T,i)=  \type(T,f \circ i) \in H^1(k, \Aut(\Psi_0))$.

\begin{remark} {\rm  If $G$ is semisimple and  has no outer isomorphism
(and it is the case for groups of type $G_2$),
$W_0=\Aut(\Psi_0)$ and the next considerations will not add anything.
}
\end{remark}

We would like to have an invariant  with value in the Galois  cohomology of some Weyl group.
The strategy is to  ``rigidify'' by adding an extra data to $i: T \to G$, namely  an orientation with respect to a quasi-split form of $G$.

Given a $k$--embedding $i:T \to G$, we denote by $\uDyn(G,i(T))$ the Dynkin diagram
$k$--scheme of $\uPsi(G,i(T))$; it is  finite \'etale and then encoded in the Galois set $\Dyn(G_{k_s},i(T)_{k_s})$.
There is a canonical isomorphism: $\uDyn(G)\cong\uDyn(G,i(T))$. (\cite{SGA3} Exp. 24, 3.3.)

We denote by   $G'$  a quasi-split $k$--form of $G$. Let $(T', B')$ be a Killing couple of
$G'$ and denote by  $\uPsi'= \uPsi( G', T')$ the associated  twisted root datum,  by $W'=N_{G'}(T')/T'$
its Weyl group which is a twisted constant finite $k$--group.

Suppose that $G$ is semisimple simply connected or adjoint;
in this case the homomorphism $\Autext(G) \to \Aut_{Dyn}(\uDyn(G)) $ is an isomorphism \cite[XXIV.3.6]{SGA3}.
 We fix then an isomorphism $v: \uDyn(G') \simlgr \uDyn(G)$. Together with the canonical
 isomorphism $\uDyn(G)\cong\uDyn(G,i(T))$, it induces an isomorphism
 $\tilde v: \uDyn(G') \simlgr \uDyn(G,i(T))$. For $G$ semisimple simply connected or adjoint, the isomorphism $\tilde v$ defines equivalently an orientation
 $$
 u \in \Isomext\Bigl( \uPsi(G', T') , \uPsi(G, i(T)) \Bigr).
 $$
 Then the Galois set $\Isomint_u\Bigl( \uPsi(G', T') , \uPsi(G, i(T) )\Bigr)$ is
 a right $W'$--torsor and  its class in $H^1(k, W')$ is called the oriented type
of $i: T \to G$ with respect to the orientation $v$. It is denoted by $\type_v(T,i)$ and we bear in mind
that it depends of the choice of
$G'$ and of $v$.

\subsection{The quasi-split case}

We deal here with the quasi-split $k$--group $G'$ and
with the exact sequence $1 \to T' \to N_{G'}(T') \buildrel \pi \over  \to  W' \to 1$.
Here we have a canonical isomorphism $id: \uDyn(G') \cong  \uDyn(G')$ and then
a natural way to define an orientation for a $k$--embedding $j: E \to G'$ of a maximal
$k$--torus.
Keeping the notations above, let us state the following result.

\begin{theorem}\label{GKR} (Kottwitz)

(1) The map $\Ker\Bigl( H^1(k,N_{G'}(T')) \to H^1(k,G') \Bigr) \buildrel \pi_* \over \lgr
H^1(k,W')$ is onto.

\smallskip

(2) For each $\gamma \in H^1(k,W')$, there exists a $k$--embedding
$j:E \to G'$ of a maximal $k$--torus such that $\type_{id}\bigl( (E,j) \bigr)=\gamma$.
 \end{theorem}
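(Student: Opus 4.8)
The plan is to prove (1) and (2) simultaneously by first recording the standard cocycle dictionary between oriented types of maximal $k$--tori and the kernel appearing in (1), and then to realize every Weyl class geometrically by means of Steinberg's section theorem. For the dictionary I would start from an embedding $\iota\colon E\to G'$ of a maximal $k$--torus and choose $g\in G'(k_s)$ conjugating $T'_{k_s}$ onto $\iota(E)_{k_s}$; then $n_\sigma:=g^{-1}\,\sigma(g)$ lies in $N_{G'}(T')(k_s)$ for all $\sigma\in\Gamma_k$, and $\sigma\mapsto n_\sigma$ is a cocycle whose class lies in $\Ker\bigl(H^1(k,N_{G'}(T'))\to H^1(k,G')\bigr)$, since its image in $G'$ is the coboundary $g^{-1}\sigma(g)$. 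Conjugation by $g$ identifies the two root data, and unwinding the definitions of \S2.2 shows that $\pi_*[n]\in H^1(k,W')$ is exactly $\type_{id}(E,\iota)$, the orientation $u$ being the one induced by $id\colon\uDyn(G')\simlgr\uDyn(G')$. Conversely a kernel class is represented by some $n_\sigma=g^{-1}\sigma(g)$, and $g\,T'_{k_s}\,g^{-1}$ descends to a maximal $k$--torus with oriented type $\pi_*[n]$. This makes (1) and (2) equivalent and reduces everything to showing that every $\gamma\in H^1(k,W')$ is the oriented type of some maximal $k$--torus of $G'$.

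The geometric input is the Chevalley quotient $\chi\colon G'\to G'/\!\!/G'\cong T'/W'$, whose regular semisimple locus carries the \'etale $W'$--cover $p\colon T'^{\,\mathrm{rs}}\to (T'/W')^{\mathrm{rs}}$. Each $c\in (T'/W')^{\mathrm{rs}}(k)$ produces, through the fibre $p^{-1}(c)$, a $W'$--torsor and hence a class $\theta(c)\in H^1(k,W')$. I would first check that $\theta$ is surjective: the preimage $\theta^{-1}(\gamma)$ is the image of the $k$--points of the twisted cover ${}_\gamma T'^{\,\mathrm{rs}}$, and since ${}_\gamma T'$ is a $k$--torus its rational points are Zariski dense when $k$ is infinite, so they meet the dense open regular locus; thus ${}_\gamma T'^{\,\mathrm{rs}}(k)\ne\emptyset$ and $\gamma$ lies in the image of $\theta$. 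The case of a finite base field is classical, every type being realized over finite fields, so I may assume $k$ infinite.

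Now fix $c\in (T'/W')^{\mathrm{rs}}(k)$ with $\theta(c)=\gamma$. Here Steinberg's section theorem enters: for the quasi-split group $G'$ it produces a rational point $s(c)\in G'(k)$ lying in the regular semisimple conjugacy class over $c$ (after the usual reduction to the simply connected cover of the derived group). I would then set $E:=Z_{G'}(s(c))$, a maximal $k$--torus of $G'$ because $s(c)$ is regular semisimple and $k$--rational. Choosing $g\in G'(k_s)$ with $g^{-1}s(c)\,g=:t_0\in T'^{\,\mathrm{rs}}(k_s)$ and using $s(c)\in G'(k)$, one computes $\sigma(t_0)=w_\sigma^{-1}\!\cdot t_0$ with $w_\sigma=\pi\bigl(g^{-1}\sigma(g)\bigr)$; hence the $W'$--torsor $p^{-1}(c)$ and the torsor computing $\type_{id}(E,\iota)$ coincide, giving $\type_{id}(E,\iota)=\theta(c)=\gamma$. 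By the dictionary, the class $[g^{-1}\sigma(g)]\in\Ker\bigl(H^1(k,N_{G'}(T'))\to H^1(k,G')\bigr)$ maps to $\gamma$ under $\pi_*$, which establishes both (1) and (2).

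The hard part will not be the existence statement of Steinberg's section, which I take as given, but the bookkeeping that makes the orientation canonical: one must verify that conjugation by $g$ induces precisely the orientation attached to $id\colon\uDyn(G')\simlgr\uDyn(G')$, so that the resulting class in $H^1(k,W')$ is $\gamma$ itself and not some twist of it. This forces fixing compatible Tits representatives of $W'$ and tracking carefully the left/right conventions in the projection $\pi\colon N_{G'}(T')\to W'$. A secondary technical point is the reduction of Steinberg's section from the simply connected case to a general reductive $G'$, together with the separate and easy treatment of finite base fields.
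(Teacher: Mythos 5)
Your proposal is correct in substance, but it proves more than the paper does and organizes the argument differently. The paper does not reprove part (1): it cites Kottwitz \cite[cor.~2]{K} (and Raghunathan, Gille) for the surjectivity of $\pi_*$ on the kernel, and its only written proof is the derivation of (2) from (1) by twisting: given $\gamma$, one takes the $N'$-torsor $P$ produced by (1), twists the standard embedding $i'\colon T'\to G'$ by $P$, and transports back along the trivialization $\phi_*\colon G'\simeq{}^P G'$ to get an embedding of oriented type $\gamma$. You instead set up the cocycle dictionary $n_\sigma=g^{-1}\sigma(g)$ identifying oriented types of maximal tori with classes in $\Ker\bigl(H^1(k,N_{G'}(T'))\to H^1(k,G')\bigr)$ --- which makes (1) and (2) visibly equivalent, and is really just the explicit form of the paper's twisting argument --- and then prove the common content directly via the Chevalley quotient, the surjectivity of $c\mapsto[p^{-1}(c)]$ (using Zariski density of rational points of the unirational twisted torus ${}_\gamma T'$ over an infinite field), and Steinberg's section. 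This is exactly the content of the cited Kottwitz/Gille proof, so what your route buys is self-containedness at the cost of the technical debts you already flag (orientation bookkeeping, reduction to the simply connected cover, finite fields). One small correction: in a reductive group that is not simply connected, $Z_{G'}(s(c))$ can be disconnected for $s(c)$ regular semisimple, so you should define $E$ as the identity component $Z_{G'}(s(c))^{0}$ (equivalently the unique maximal torus containing $s(c)$); this does not affect the argument.
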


In  Kottwitz's paper, the  result  occurs only as
 a result on embeddings of maximal tori \cite[cor. 2]{K}.
 It was rediscovered by Raghunathan \cite{R} and independently by the second author  \cite{G2}.
 The proof of (1) uses Steinberg's theorem on rational conjugacy classes and we can explain quickly how one can derive (2) from (1).
 Given $\gamma \in H^1(k,W')$, assertion (1) provides a principal homogenous space $P$ under $N'=N_{G'}(T')$
 together with a trivialization $\phi: G' \simlgr P \wedge^{N'} G'$ such that
 $\pi_*[P]= \gamma$. Then $\phi$ induces a trivialization at the level of twisted
 $k$-groups $\phi_*: G' \simlgr {^P{G'}}$. Now if we twist $i': T' \to G'$ by $P$, we get
 a $k$--embedding ${^Pi'}: {^PT'} \to    {^P{G'}} \buildrel \phi_*  \over \simla G'$
 and one checks that  $\type_{id}\bigl( {^PT'}, {^Pi'} \bigr)=\gamma$.

\subsection{Image of the cohomology of tori}

We give now a slightly more precise form of Steinberg's theorem \cite[th. 11.1]{St}, see also \cite[III.2.3]{Se}.

\begin{theorem} \label{theo_steinberg}  Let $[z]  \in H^1(k, G')$.
Let $i: T \to {_zG'}$ be a maximal $k$--torus of the twisted $k$--group ${_zG'}$.
Then there exists a $k$--embedding $j: T \to G'$ and $[a] \in H^1(k,T)$ such that
$j_*[a]= [z]$ and such that  $\type_{can}(T,i)= \type_{id}( T,j)$.
\end{theorem}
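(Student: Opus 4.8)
The plan is to extract from the given torus a $1$-cocycle valued in $N':=N_{G'}(T')$ whose image in $G'$ is $[z]$ and whose image in $W'$ represents the oriented type of $(T,i)$, and then to use Theorem \ref{GKR} to replace it by a cocycle that is cohomologous to it in $W'$ but dies in $H^1(k,G')$; the ``difference'' of the two cocycles will be the sought class $[a]\in H^1(k,T)$.

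First I would make the type explicit. Write $z=(z_\sigma)\in Z^1(k,G'(k_s))$, so that the Galois action on $({}_zG')(k_s)=G'(k_s)$ is $\sigma\ast g=z_\sigma\,{}^\sigma g\,z_\sigma^{-1}$, and let $S:=i(T)(k_s)$, a maximal torus of $G'_{k_s}$ stable under $\ast$. Choosing $h\in G'(k_s)$ with $h^{-1}Sh=T'_{k_s}$ and setting $a_\sigma:=h^{-1}z_\sigma\,{}^\sigma h$, a direct check gives $a_\sigma\in N'(k_s)$, that $a=(a_\sigma)\in Z^1(k,N')$, and that $\mathrm{int}(h^{-1})$ is a $k$-isomorphism $i(T)\simlgr {}^{\bar a}T'$, where $\bar a:=\pi(a)\in Z^1(k,W')$. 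By construction $[a]=[z]$ in $H^1(k,G')$, and unwinding the definition of the canonical orientation shows $[\bar a]=\type_{can}(T,i)=:\gamma$. In particular $T\cong {}^{\bar a}T'$.

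Next I would invoke Theorem \ref{GKR}(1) to pick $P\in Z^1(k,N')$ with $\pi_*[P]=\gamma$ that is trivial in $H^1(k,G')$, together with $p\in G'(k_s)$ trivializing it, say $P_\sigma=p\,({}^\sigma p)^{-1}$; as in the derivation of \ref{GKR}(2) this yields a $k$-embedding $j:{}^{P}T'\to G'$ with $\type_{id}({}^{P}T',j)=\gamma$. Since $[\pi(a)]=[\pi(P)]=\gamma$, after conjugating $a$ by a suitable $n\in N'(k_s)$ (which changes neither $[a]\in H^1(k,G')$ nor the isomorphism class of the torus) I may assume $\pi(a)=\pi(P)$ \emph{as cocycles}. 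Then $c_\sigma:=a_\sigma P_\sigma^{-1}$ lands in $T'(k_s)$, and one checks that $c=(c_\sigma)$ is a $1$-cocycle for the twisted torus ${}^{P}T'\cong T$; this $[c]\in H^1(k,{}^{P}T')=H^1(k,T)$ is the class $[a]$ of the statement. The two tori ${}^{\bar a}T'$ and ${}^{P}T'$ are identified through $\gamma$, so $j$ and $i$ are embeddings of the \emph{same} torus $T$.

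It remains to verify $j_*[c]=[z]$, which is the heart of the matter. Twisting by $P$ identifies $H^1(k,{}^{P}G')$ with $H^1(k,G')$ via $[d]\mapsto[d_\sigma P_\sigma]$ (the distinguished twisted class going to $[P]$, which is trivial), and by construction $j$ is obtained from ${}^{P}i'$ by transport along this identification, so $j_*$ equals $({}^{P}i')_*$ followed by $[d]\mapsto[d_\sigma P_\sigma]$. Now $({}^{P}i')_*[c]$ is represented by $c$ regarded in ${}^{P}G'$, so its image is represented by $c_\sigma P_\sigma=a_\sigma$, whence $j_*[c]=[a]=[z]$. Together with $\type_{id}(T,j)=\gamma=\type_{can}(T,i)$ this proves the theorem. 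The main obstacle is bookkeeping: matching the canonical orientation with $[\bar a]$, lining up the two tori and the two cocycles $a$ and $P$ so that their quotient is genuinely a cocycle of the twisted torus, and keeping the torsion and trivialization conventions consistent so that the final identity $c_\sigma P_\sigma=a_\sigma$ comes out on the nose.
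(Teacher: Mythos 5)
Your argument is correct (modulo the convention-matching you flag yourself), but it is a genuinely different route from the paper's. The paper proves the theorem by a direct application of Steinberg's theorem on rational conjugacy classes: it passes to the simply connected cover, picks a regular element $g^{sc}$ of the given torus, uses the rationality of its conjugacy class (Steinberg, and Borel--Springer in the imperfect case) to produce a rational regular element $g_1$, and takes $T_1=Z_{G'}(g_1)$ as the target torus; the cocycle $a_\sigma=h\,z_\sigma\,h^{-\sigma}$ then automatically lands in $T_1(k_s)$. The equality of oriented types is checked afterwards by a separate rigidity trick (replacing $k$ by the function field of the $T_1$--torsor so as to reduce to the case $[a]=1$). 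You instead take Theorem \ref{GKR}(1) as a black box: you extract from $i(T)$ a cocycle $a$ with values in $N_{G'}(T')$ whose image in $H^1(k,G')$ is $[z]$ and whose image in $H^1(k,W')$ is the type, use \ref{GKR}(1) to produce a second normalizer-valued cocycle $P$ with the same Weyl image but trivial in $H^1(k,G')$, and read off the class $[c]=[aP^{-1}]\in H^1(k,{}_PT')$ by the twisting formalism, so that the type equality holds by construction rather than by a separate argument. Your computations (that $a_\sigma$ normalizes $T'$, that $c$ is a cocycle of the twisted torus, and that the twisting bijection sends $[c]$ to $[c_\sigma P_\sigma]=[a]=[z]$) all check out. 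The trade-off: the paper's proof is self-contained relative to Steinberg's original theorem and exhibits $j(T)$ concretely as the centralizer of a rational regular element, while yours shows the statement is a formal consequence of the surjectivity in \ref{GKR}(1) plus standard cocycle yoga and dispenses with both the regular-element construction and the rigidity trick; of course, since \ref{GKR}(1) is itself proved via Steinberg's theorem, the two arguments ultimately rest on the same foundation.
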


In the result, the first orientation is the canonical one, namely
arising from the  canonical  isomorphism
$\uDyn(G') \simlgr \uDyn\bigl(_z(G') \bigr)$.

\begin{proof}
If the base field is finite, there is nothing to do since $H^1(k,G')=1$ by Lang's theorem.
We can then assume than $k$ is infinite.
We denote by $P(z)$ the $G'$--homogeneous space defined by $z$ and by
$\phi: G'_{k_s} \simlgr P(z)_{k_s}$ a trivialization satisfying $z_\sigma = \phi^{-1} \circ \sigma(\phi)$ for each
$\sigma\in\Gamma_k$. It induces a trivialization $\varphi : G'_{k_s} \simlgr  ({_z(G')})_{k_s}$
satisfying $\mathrm{int}(z_\sigma) = \varphi^{-1} \circ \sigma(\varphi)$ for each
$\sigma\in\Gamma_k$.

We denote by $(G')^{sc}$ the simply connected cover of $DG'$ and by $f: (G')^{sc} \to G'$ the natural $k$--homomorphism.
Let $T^{sc}$ be ${(_zf)}^{-1}(i(T)).$
Let $g^{sc}$ be a regular element in $T^{sc}(k)$ and consider the ${G'}^{sc}(k_s)$--conjugacy class $\mathcal{C}$
of $ \varphi^{-1}(g^{sc})$ in $(G')^{sc}(k_s)$.
This conjugacy class is rational in the sense that it is stabilized by $\Gamma_k$
since ${\bigl( \varphi^{-1}(g^{sc}) \bigr)}= z_\sigma \,  {^\sigma\bigl(\varphi^{-1}(g^{sc}) \bigr)} \, z_\sigma^{-1} $
for each $\sigma \in \Gamma_k$.
According to Steinberg \cite[10.1]{St} (and \cite[8.6]{BoS} in the non perfect case),
${{\mathcal C} \cap (G')^{sc}(k)}$ is not empty, so that there exists $g_1^{sc} \in
(G')^{sc} (k)$ and $h^{sc} \in (G')^{sc}(k_s)$ such that $\varphi^{-1}(g^{sc}) =   (h^{sc})^{-1} \,    g_1^{sc} \, h^{sc}$.
We put
$g={_zf}(g^{sc})$, $g_1=f(g_1^{sc})$, $h= f(h^{sc})$,
 $T_1= Z_{G'}(g_1)$ and $i_1: T_1 \to   G'$.

Since $g \in ({_z(G')})(k)$ and $g_1 \in G'(k)$, we have $
h^{-1}\,    g_1 \, h =  z_\sigma \, \, {^\sigma(h^{-1} \,    g_1 \, h)} \, z_\sigma^{-1}= z_\sigma\,
 h^{-\sigma} \, g_1 \, {^\sigma h} \, z_\sigma^{-1}$
for each $s \in \Gamma_k$ whence
$$
g_1= a_\sigma \,  g_1 \,  a_\sigma^{-1}
$$
where $a_\sigma= h \, z_\sigma \, h^{-\sigma}$ is a $1$--cocycle cohomologous to
$z$ with values in $T_1(k_s)=Z_{G'}(g_1)(k_s)$.
It remains to show the equality on the oriented types.
By the rigidity trick (see proof of prop. 3.2 in \cite{G2}), up to replace $k$ by the function field
of the $T_1$--torsor defined by $a$, we can assume that $[a]= 1 \in H^1(k,T_1)$.
We write $a_\sigma= b^{-1}\, {^\sigma b}$ for some $b \in T_1(k_s)$ and we have
then $z_\sigma= (bh)^{-1} \, {^\sigma(bh)}$ and  $\varphi^{-1}(g) =   (bh)^{-1} \,    g_1 \, bh $.

Putting  $h_2= bh \in G'(k_s)$, we have then $z_\sigma= h_2^{-1} \, {^\sigma h_2}$ and  $\varphi^{-1}(g) =   h_2^{-1} \,    g_1 \, h_2$.
We get  $k$--isomorphisms $\phi_2= \phi \circ L_{h_2^{-1}}: G' \to P(z)$ and $\varphi_2= \varphi \circ \mathrm{int}(h_2^{-1}):
G' \simlgr {_z(G')}$, such that the following diagram commutes
$$
 \xymatrix{
 T_1 \ar[r]^{i_1} \ar[d]^{\varphi_2}_{\wr}  & {G'} \ar[d]^{\varphi_2}_{\wr} \\
 T \ar[r]^{i} & {_z(G')}  .
 }
 $$
 Thus $\type_{can}(T,i)= \type_{id}(T_1,i_1) \in H^1(k,W')$.
\end{proof}

\subsection{Image of the cohomology of tori, II}

We recall the following well-known fact.

\begin{lemma}\label{lem_conj_isom} Let $H$ be a reductive $k$-group and
 $T$ be a $k$--torus of the same rank as $H$.
Let  $i,j: T \to H$ be  $k$--embeddings of a
maximal $k$--torus $T$. If $j=\Int(h)\circ i$ for some $h\in H(k_s)$, then we have
$h^{-1}\leftidx^{\sigma}h\in i(T)(k_s)$ for all $\sigma$ in the absolute Galois group $\Gamma_k$.
\end{lemma}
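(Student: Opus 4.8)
The plan is to differentiate the relation $j = \Int(h)\circ i$ under the Galois action, exploiting that both $i$ and $j$ are defined over $k$ and hence commute with $\Gamma_k$. Fix $\sigma \in \Gamma_k$. Since $i$ and $j$ are $k$--morphisms they are fixed by $\sigma$, so ${}^{\sigma}i = i$ and ${}^{\sigma}j = j$, whereas conjugating an inner automorphism produces ${}^{\sigma}\bigl(\Int(h)\bigr) = \Int({}^{\sigma}h)$. Applying $\sigma$ to $j = \Int(h)\circ i$ therefore gives
\[
j = \Int({}^{\sigma}h)\circ i,
\]
and comparing with the original relation yields $\Int(h)\circ i = \Int({}^{\sigma}h)\circ i$ as morphisms $T \to H$.

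From this I would read off that the element $g := h^{-1}\,{}^{\sigma}h \in H(k_s)$ centralises $i(T)$. Indeed, evaluating the last equality on a point $t$ of $T$ gives $h\,i(t)\,h^{-1} = {}^{\sigma}h\,i(t)\,({}^{\sigma}h)^{-1}$, and substituting this into
\[
g\,i(t)\,g^{-1} = h^{-1}\bigl({}^{\sigma}h\,i(t)\,({}^{\sigma}h)^{-1}\bigr)h = h^{-1}\bigl(h\,i(t)\,h^{-1}\bigr)h = i(t)
\]
shows that $g$ commutes with every point of $i(T)$, i.e. $g \in Z_H(i(T))(k_s)$.

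Finally I would invoke the structural fact that the centraliser of a maximal torus in a reductive group is the torus itself. Since $T$ has the same rank as $H$ and $i$ is an embedding as a maximal torus, $i(T)$ is a maximal torus of $H$, so $Z_H(i(T)) = i(T)$; hence $g = h^{-1}\,{}^{\sigma}h \in i(T)(k_s)$, and as $\sigma$ was arbitrary this holds for all $\sigma \in \Gamma_k$, proving the lemma. The computation here is entirely formal: the only substantive input is the equality $Z_H(i(T)) = i(T)$, that is, that Cartan subgroups of reductive groups are maximal tori. Accordingly the sole point demanding care is ensuring that $i(T)$ really is a \emph{maximal} torus, which is exactly what the equal-rank hypothesis guarantees; I do not anticipate any genuine obstacle beyond this.
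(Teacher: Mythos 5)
Your argument is correct and is essentially the same as the paper's: both apply $\sigma$ to the relation $j=\Int(h)\circ i$, use that $i$ and $j$ are defined over $k$ to deduce $j=\Int(\leftidx^{\sigma}h)\circ i$, and conclude that $h^{-1}\,\leftidx^{\sigma}h$ lies in $C_H(i(T))=i(T)$ since the centraliser of a maximal torus in a reductive group is the torus itself. No issues.
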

\begin{proof}
For any $\sigma\in\Gamma$ and any $t\in T(k_s)$, we have $j(\leftidx^{\sigma}t)=
\leftidx^{\sigma}h\cdot i(\leftidx^{\sigma} t)\cdot\leftidx^{\sigma}h^{-1}$.
Therefore, we have $j=\Int(\leftidx^\sigma h)\circ i=\Int(h)\circ i$ and
$h^{-1} \, \leftidx^{\sigma}h$ is a $k_s$-point of the centralizer $C_{H}(i(T))=i(T)$
\end{proof}

\newpage

\begin{lemma}\label{lem_image} Let $H$ be a reductive $k$-group and
let $T$ be a $k$--torus of the same rank as $H$. Let $v$ be  an orientation of $H$ with respect to a quasi-split form $H'$.
Let  $i, j: T \to H$ be  $k$--embeddings of a
maximal $k$--torus $T$  which are $H(k_s)$--conjugate. Then we have
$\mathrm{Im}(i_*)=\mathrm{Im}(j_*) \subseteq H^1(k,H)$ and $\type_v(T,i)=\type_v(T,j)$.
\end{lemma}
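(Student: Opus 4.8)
The plan is to leverage Lemma \ref{lem_conj_isom}, which already does the essential geometric work. By hypothesis $i$ and $j$ are $H(k_s)$--conjugate, so there is $h \in H(k_s)$ with $j = \Int(h) \circ i$. Lemma \ref{lem_conj_isom} then tells us that the cochain $\sigma \mapsto h^{-1}\,\leftidx^{\sigma}h$ takes values in $i(T)(k_s)$. Since $i(T)$ is abelian and $h^{-1}\,\leftidx^{\sigma}h$ satisfies the cocycle relation inside the (now commutative) group $i(T)(k_s)$, this defines a class $[c] \in H^1(k, i(T))$, which we transport to $H^1(k,T)$ via $i^{-1}$, calling it $[c_T]$.

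The first main point is the equality of images. For any $[a] \in H^1(k,T)$, I would compare $i_*[a]$ and $j_*[a]$ in $H^1(k,H)$. Writing out the twisted cocycle for $j_*[a]$ using $j = \Int(h)\circ i$, one finds that $j_*[a]$ is obtained from $i_*[a]$ by the coboundary-type modification coming from $\sigma \mapsto h^{-1}\,\leftidx^{\sigma}h$; concretely, the cocycle $\sigma \mapsto j(a_\sigma)$ is cohomologous in $H$ to $\sigma \mapsto i(a_\sigma)$ because the two differ by the coboundary of $h$. Thus $j_*[a] = i_*[a]$ for every $[a]$, which gives $\mathrm{Im}(i_*) = \mathrm{Im}(j_*)$. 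The key observation making this clean is that the conjugating element $h$ itself provides the coboundary realizing the equality in $H^1(k,H)$, even though $h$ is not defined over $k$; the failure of $h$ to be rational is precisely absorbed into $i(T)(k_s)$ and does not affect the class in $H^1(k,H)$.

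The second point is the equality of oriented types $\type_v(T,i) = \type_v(T,j)$. Here I would argue that $i$ and $j$ induce the \emph{same} twisted root datum $\uPsi(H, i(T)) = \uPsi(H, j(T))$, since $i(T) = j(T)$ as $k$--subtori (conjugation by $h$ sends $i(T)$ to itself by Lemma \ref{lem_conj_isom}, as $h^{-1}\,\leftidx^{\sigma}h \in i(T)$ forces $\Int(h)$ to commute with the Galois action on $i(T)$, hence $j(T)=i(T)$). Because the orientation $v$ depends only on the Dynkin datum and the fixed identification $\uDyn(H') \simlgr \uDyn(H)$, and because the oriented type is computed as the class of the $W'$--torsor $\Isomint_u$ attached to the common twisted root datum, both embeddings yield the identical torsor and hence the identical class in $H^1(k,W')$.

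The step I expect to require the most care is verifying that the cochain $\sigma \mapsto h^{-1}\,\leftidx^{\sigma}h$ genuinely implements a coboundary in $H^1(k,H)$ relating $i_*[a]$ and $j_*[a]$ for all $[a]$ simultaneously, rather than merely for the trivial class. The subtlety is bookkeeping: one must track how the base-point twist by $a$ interacts with the conjugation by $h$, and confirm that the standard formula $(j_*[a])_\sigma = h\,(i_*[a])_\sigma\,\leftidx^{\sigma}h^{-1}$ holds on the nose after unwinding the definitions of $i_*$ and $j_*$. Once this identity is in hand the conclusion is immediate, so the real work is the careful cocycle manipulation, which is routine but easy to get wrong in the ordering of the twists.
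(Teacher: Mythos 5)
Both halves of your argument rest on the right starting point (Lemma \ref{lem_conj_isom}), but each contains a genuine error. For the equality of images: the identity you want to verify, $(j_*[a])_\sigma = h\,(i_*[a])_\sigma\,\leftidx^{\sigma}h^{-1}$, does not hold on the nose. Writing $c_\sigma = h^{-1}\leftidx^{\sigma}h \in i(T)(k_s)$, one has $j(a_\sigma)=h\,i(a_\sigma)\,h^{-1}=h\bigl(i(a_\sigma)\,c_\sigma\bigr)\leftidx^{\sigma}h^{-1}$, so the correct relation is $j_*[a]=i_*\bigl([a]\cdot[c_T]\bigr)$ where $[c_T]=[i^{-1}(c)]\in H^1(k,T)$: a \emph{translate} of $[a]$, not $[a]$ itself. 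Since $H^1(k,H)$ is only a pointed set, the fact that $i_*[c_T]$ is trivial does not let you cancel the translate, and the pointwise equality $j_*[a]=i_*[a]$ fails in general (the fiber of $i_*$ through $[a]$ is governed by the kernel of the map twisted by $i(a)$, which need not contain $[c_T]$). Your conclusion survives only because translation by $[c_T]$ is a bijection of the abelian group $H^1(k,T)$, whence $\mathrm{Im}(i_*)=\mathrm{Im}(j_*)$; this is exactly the paper's argument, which takes a cocycle $\alpha$ with values in $j(T)$ and replaces it by the cohomologous $\beta_\sigma=h^{-1}\alpha_\sigma\leftidx^{\sigma}h=(h^{-1}\alpha_\sigma h)\,c_\sigma\in i(T)(k_s)$, then concludes by symmetry.

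The second half has a more serious flaw: the claim $i(T)=j(T)$ is false. Lemma \ref{lem_conj_isom} gives $h^{-1}\leftidx^{\sigma}h\in i(T)(k_s)$, which expresses that $j(T)=h\,i(T)\,h^{-1}$ is defined over $k$ (already known, since $j$ is a $k$--embedding); it does not say that $h$ normalizes $i(T)$. Already for $h\in H(k)$ not in $N_H(i(T))(k)$ one gets a $k$--embedding $j=\Int(h)\circ i$ with $j(T)\neq i(T)$. So the two twisted root data $\uPsi(H,i(T))$ and $\uPsi(H,j(T))$ are not literally the same object and your argument collapses. What is needed (and what the paper does) is to use $\Int(h)$ to identify them: $h$ is a $k_s$--point of the transporter $\Transpt_H(T_1,T_2)$, which is a right $N_H(T_1)$--torsor inducing $\Transpt_H(T_1,T_2)\wedge\Isomint_v(\uPsi',\uPsi(H,T_1))\simeq\Isomint_v(\uPsi',\uPsi(H,T_2))$; the point is then that the image of this torsor in $H^1(k,W_1)$ is trivial precisely because its defect cocycle $h^{-1}\leftidx^{\sigma}h$ lies in $T_1(k_s)$, hence maps to the identity in the Weyl group. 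That push-forward to $W_1$ and its vanishing is the step missing from your proposal.
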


\begin{proof}
Let $j=\Int(h)\circ i$ for some $h\in H(k_s)$. By Lemma~\ref{lem_conj_isom}, we have $h^{-1}\leftidx^{\sigma}h\in i(T)(k_s)$.
Let $[\alpha]\in\mathrm{Im}(j_*)$  and $\alpha$ be a cocycle with values in $j(T(k_s))$ which
represents $[\alpha]$. Define $\beta_\sigma=h^{-1}\alpha_\sigma\leftidx^{\sigma}h$.
Then  $\beta$ is cohomologous to $\alpha$ and $\beta_\sigma=(h^{-1}\alpha_\sigma h)\cdot
(h^{-1}\leftidx^{\sigma}h)\in i(T(k_s))$. Hence
$[\alpha]=[\beta]\in\mathrm{Im}(i_*)$, which shows that $\mathrm{Im}(i_*)=\mathrm{Im}(j_*) \subseteq H^1(k,H)$.

Let $T_1=i(T)$ and $T_2=j(T)$. Let $\Transpt_G(T_1,T_2)$ be the strict transporter from $T_1$ to $T_2$
(\cite{SGA3}, Exp. $\mathrm{VI}_{\mathrm{B}}$, Def. 6.1 (ii)).
Note that $\Transpt_G(T_1,T_2)$ is a right $N_G(T_1)$-torsor.
We have a canonical isomorphism

\begin{center}
$\Transpt_G(T_1,T_2)\wedge\Isomint_v(\uPsi',\uPsi(G,T_1)) \, \simlgr \,  \Isomint_v(\uPsi',\uPsi(G,T_2))$.
\end{center}

\noindent Since $j=\Int(h)\circ i$, we have $h\in\Transpt_G(T_1,T_2)(k_s)$ and $h$ defines a trivialization
$\phi_h: N_G(T_1) \to \Transpt_G(T_1, T_2)$ which sends
the neutral element to $h$. Let $W_1=N_G(T_1)/T_1$. Since $\phi_{h}^{-1}\circ \sigma(\phi_h)=h^{-1}\, \leftidx^{\sigma}h\in T_1(k_s)$,
the image of the class of $\Transpt_G(T_1,T_2)$ in $H^1(k,W_1)$ is trivial. Hence $\Isomint_v(\uPsi',\uPsi(G,T_1))\simeq \Isomint_v(\uPsi',\uPsi(G,T_2))$,
i.e. \cskip $\type_v(T,i)=\type_v(T,j)$.
\end{proof}

\begin{proposition} \label{prop_image} Let $T$ be a $k$-torus of the same rank as $G$. Let $i_1, i_2: T \to G$ be
$k$--embeddings of  $T$ in $G$. Let $v$ be  an orientation of $G$ with respect to a quasi-split form $G'$.
If  $\type_v(T,i_1)= \type_v(T,i_2) \in H^1(k,W')$, then there is a
$k$--embedding $j:T \to G$ such that $j(T)=i_1(T)$ and $j$, $i_2$ are $G(k_s)$-conjugate. In particular, the
images of $i_{1,*}$, $i_{2,*}$, $j: H^1(k,T ) \to H^1(k,G)$ coincide.
\end{proposition}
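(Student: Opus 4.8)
The plan is to reduce the statement to the triviality of a single cohomology class attached to a transporter, and then to extract that triviality from the equality of oriented types by a twisting (torsion) argument; the last assertion is then a formal consequence of Lemma~\ref{lem_image}.

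Write $T_1=i_1(T)$ and $T_2=i_2(T)$, and set $W_1=N_G(T_1)/T_1$. Since $T$ has the same rank as $G$, both $T_\nu$ are maximal $k$--tori, so $Z_G(T_\nu)=T_\nu$, and over $k_s$ they are $G(k_s)$--conjugate; hence $\Transpt_G(T_1,T_2)$ is a right $N_G(T_1)$--torsor. First I observe that producing $j$ is equivalent to finding $g\in\Transpt_G(T_1,T_2)(k_s)$ with $g^{-1}\,{}^{\sigma}g\in T_1(k_s)$ for all $\sigma\in\Gamma_k$: given such a $g$, set $j=\Int(g^{-1})\circ i_2$; then $j(T)=g^{-1}T_2g=T_1=i_1(T)$ and $i_2=\Int(g)\circ j$ is $G(k_s)$--conjugate to $j$, while the condition $g^{-1}\,{}^{\sigma}g\in T_1(k_s)$ is exactly what forces ${}^{\sigma}j=j$ (because $Z_G(T_2)=T_2$), i.e. $j$ is defined over $k$. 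Such a $g$ exists precisely when the image of $[\Transpt_G(T_1,T_2)]$ under $H^1(k,N_G(T_1))\to H^1(k,W_1)$ is trivial, so it remains to prove this triviality.

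For this I would invoke the canonical isomorphism of torsors established in the proof of Lemma~\ref{lem_image},
$$
\Transpt_G(T_1,T_2)\wedge^{N_G(T_1)}\Isomint_v\bigl(\uPsi',\uPsi(G,T_1)\bigr)\;\simlgr\;\Isomint_v\bigl(\uPsi',\uPsi(G,T_2)\bigr).
$$
Put $P_\nu=\Isomint_v(\uPsi',\uPsi(G,T_\nu))$; each $P_\nu$ is a $(W_\nu,W')$--bitorsor (left action of the Weyl group $W_\nu=W(\uPsi(G,T_\nu))$ by post--composition, right action of $W'$ by pre--composition, both preserving the orientation $v$), and $N_G(T_\nu)$ acts on $P_\nu$ through $W_\nu$. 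Let $Q=\Transpt_G(T_1,T_2)\wedge^{N_G(T_1)}W_1$ be the associated right $W_1$--torsor, so that $[Q]\in H^1(k,W_1)$ is the class whose triviality we need. Since $N_G(T_1)$ acts on $P_1$ through $W_1$, associativity of the contracted product rewrites the left-hand side as $Q\wedge^{W_1}P_1$. Twisting by the bitorsor $P_1$ is a bijection $\Theta\colon H^1(k,W_1)\simlgr H^1(k,W')$, $[R]\mapsto[R\wedge^{W_1}P_1]$, which sends the neutral class to $[P_1]$ (as $W_1\wedge^{W_1}P_1\cong P_1$); the displayed isomorphism reads $\Theta([Q])=[P_2]$. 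By hypothesis $[P_1]=\type_v(T,i_1)=\type_v(T,i_2)=[P_2]$, hence $\Theta([Q])=\Theta(1)$, and injectivity of $\Theta$ gives $[Q]=1$, as required.

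This yields $j$ with $j(T)=i_1(T)$ and $j,i_2$ being $G(k_s)$--conjugate. For the final assertion, Lemma~\ref{lem_image} applied to $j$ and $i_2$ gives $\Im(j_*)=\Im(i_{2,*})$; and since $j(T)=i_1(T)$, the map $\phi:=i_1^{-1}\circ j$ is a $k$--automorphism of $T$, so $j_*=i_{1,*}\circ\phi_*$ with $\phi_*$ bijective on $H^1(k,T)$, whence $\Im(j_*)=\Im(i_{1,*})$; combining, all three images coincide. The main obstacle is the middle step: one must fix the left/right bitorsor actions of $W_1$ and $W'$ on $P_1$ so that they are compatible with the canonical isomorphism of Lemma~\ref{lem_image}, and verify that $\Theta$ is genuinely a bijection (equivalently, that $P_1$ is an invertible bitorsor). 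Once this action bookkeeping is in place, the passage to $[Q]=1$ is purely formal.
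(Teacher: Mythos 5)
Your proposal is correct and follows essentially the same route as the paper: both reduce the construction of $j$ to the vanishing of the class of $\Transpt_G(T_1,T_2)$ in $H^1(k,W_1)$, both deduce that vanishing from the canonical isomorphism $\Transpt_G(T_1,T_2)\wedge^{N_G(T_1)}\Isomint_v(\uPsi',\uPsi(G,T_1))\simeq\Isomint_v(\uPsi',\uPsi(G,T_2))$ together with the hypothesis on oriented types, and both finish with the same automorphism-of-$T$ argument plus Lemma~\ref{lem_image}. Your bitorsor-twisting bijection $\Theta$ merely makes explicit the step the paper states in one line (``hence $\overline{\eta}$ is the trivial class''), and it is correctly justified.
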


\begin{proof}

Let $T_1=i_1(T)$ and $T_2=i_2(T)$ and again put $W_i=N_G(T_i)/T_i$ for $i=1,2$.
Let $\eta$ denote the class of the $N_G(T_1)$--torsor $\Transpt_G(T_1,T_2)$ in $H^1(k, N_G(T_1))$ and $\overline{\eta}$ be
the image of $\eta$ in $H^1(k,W_1)$.
We have a canonical isomorphism $\Transpt_G(T_1,T_2)\wedge\Isomint_v(\uPsi',\uPsi(G,T_1))\simlgr \Isomint_v(\uPsi',\uPsi(G,T_2))$. Since
$\type_v(T,i_1)=\type_v(T,i_2)$, we have $\Isomint_v(\uPsi',\uPsi(G,T_1))\simeq\Isomint_v(\uPsi',\uPsi(G,T_2))$,
hence $\overline{\eta}$ is the trivial class in $H^1(k,W_1)$. Therefore
the $N_G(T_1)$-torsor $\Transpt_G(T_1,T_2)$ admits a reduction to $T_1$.
More precisely,  there exist a $T_1$--torsor $E_1$ and an isomorphism $E_1 \wedge^{T_1} N_G(T_1) \simlgr \Transpt_G(T_1,T_2)$ of $N_G(T_1)$--torsors.
We take a point $e_1 \in E_1(k_s)$ and consider  its image $g$ in $G(k_s)$ under the mapping
$E_1 \wedge^{T_1} N_G(T_1) \simlgr \Transpt_G(T_1,T_2) \hookrightarrow G$.
Then  $g^{-1}\, \leftidx^{\sigma}g$ is a $k_s$-point of the centralizer $C_{G}(T_1)=T_1$ for all $\sigma\in\Gamma_k$.
We define a $k$-embedding $j:T\to G$ as $j(t)=(\Int(g^{-1})\circ i_2)(t)$.
To see that $j$ is indeed defined over $k$, we check as follows:
\begin{equation*}
\begin{split}
j(\leftidx^{\sigma}t)&=(\Int(g^{-1})\circ i_2)(\leftidx^{\sigma}t)\\
&=\Int(g^{-1})(\leftidx^{\sigma}i_2(t))\\
&=h\cdot\leftidx^{\sigma}((\Int(g^{-1})\circ i_2)(t))\cdot h^{-1}\\
&=\leftidx^{\sigma}(j(t)).
\end{split}
\end{equation*}
 By our construction, we have $j(T)=i_1(T)$ and $i_2$, $j$ are conjugated.
Let $f=(j|_{T_1})^{-1}\circ i_1$. Then $f$ is an automorphism of $T$ and $i_1=j\circ f$. Hence the images of $i_{1,*}$ and $ j_*$ coincide.
By Lemma~\ref{lem_image}, the images of $j$ and $ i_{2,*}$ coincide.
\end{proof}


This applies to the quasi-split case and enables us to slightly refine Theorem \ref{theo_steinberg}.

\begin{corollary}\label{cor_steinberg}
With the notations of Theorem \ref{theo_steinberg},    choose (by Theorem \ref{GKR})
for each class $\gamma \in H^1(k,W')$
 a $k$--embedding
$i(\gamma): E(\gamma) \to G'$  of oriented type $\gamma$.
Then the map
$$
\bigsqcup\limits_{\gamma \in H^1(k,W')} H^1(k, E(\gamma))  \enskip
\xrightarrow{\quad \sqcup \, i(\gamma)_* \quad} \enskip
 H^1(k,G')
$$
 is onto.
\end{corollary}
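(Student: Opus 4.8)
The plan is to combine Theorem~\ref{theo_steinberg} with Proposition~\ref{prop_image}. Fix an arbitrary class $[z] \in H^1(k,G')$; I must produce $\gamma \in H^1(k,W')$ with $[z] \in \mathrm{Im}(i(\gamma)_*)$. Since every reductive $k$--group contains a maximal $k$--torus, I first choose a $k$--embedding $i: T \to {_zG'}$ of a maximal $k$--torus of the twisted group. Theorem~\ref{theo_steinberg} then provides a $k$--embedding $j: T \to G'$ and a class $[a] \in H^1(k,T)$ with $j_*[a] = [z]$ and $\type_{can}(T,i) = \type_{id}(T,j)$. Writing $\gamma := \type_{id}(T,j) \in H^1(k,W')$, this already realizes $[z]$ in $\mathrm{Im}(j_*)$ for an embedding $j$ of oriented type $\gamma$.

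It remains to compare $j$ with the reference embedding $i(\gamma): E(\gamma) \to G'$ chosen (via Theorem~\ref{GKR}) to have oriented type $\gamma$. The obstacle is that $j$ and $i(\gamma)$ are a priori embeddings of different tori, while Proposition~\ref{prop_image} compares two embeddings of one fixed torus. To bridge this, I would use that the oriented type $\gamma$ determines the ordinary type, i.e. the isomorphism class of the twisted root datum in $H^1(k,\Aut(\Psi_0))$, and hence the $\Gamma_k$--module structure of the character lattice; thus $T$ and $E(\gamma)$ are $k$--isomorphic tori. Fixing an isomorphism $\psi: T \simlgr E(\gamma)$, I replace $i(\gamma)$ by $i(\gamma)\circ\psi: T \to G'$. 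This alters neither the oriented type nor the image in $H^1(k,G')$, because $\psi_*: H^1(k,T) \simlgr H^1(k,E(\gamma))$ is a bijection, so that $\mathrm{Im}\bigl((i(\gamma)\circ\psi)_*\bigr) = \mathrm{Im}(i(\gamma)_*)$.

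Now $i(\gamma)\circ\psi$ and $j$ are two $k$--embeddings of the same torus $T$ sharing the oriented type $\gamma$, so Proposition~\ref{prop_image} yields $\mathrm{Im}\bigl((i(\gamma)\circ\psi)_*\bigr) = \mathrm{Im}(j_*)$. Consequently
$$
[z] = j_*[a] \in \mathrm{Im}(j_*) = \mathrm{Im}\bigl((i(\gamma)\circ\psi)_*\bigr) = \mathrm{Im}(i(\gamma)_*),
$$
and since $[z]$ was arbitrary the map $\bigsqcup_\gamma i(\gamma)_*$ is onto. The step I expect to demand the most care is the bookkeeping of orientations: one must confirm that the canonical orientation produced by Theorem~\ref{theo_steinberg} coincides with the identity orientation relative to which $\gamma$ and the reference embeddings $i(\gamma)$ are defined, so that all three occurrences of ``oriented type $\gamma$'' genuinely live in the same $H^1(k,W')$ and Proposition~\ref{prop_image} applies verbatim.
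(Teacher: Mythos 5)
Your argument is correct and is exactly the route the paper intends: the corollary is stated immediately after Proposition~\ref{prop_image} with only the remark that it ``applies to the quasi-split case,'' i.e.\ one combines Theorem~\ref{theo_steinberg} with Proposition~\ref{prop_image} precisely as you do. Your extra care in identifying $T$ with $E(\gamma)$ via a $k$--isomorphism (so that Proposition~\ref{prop_image} applies to two embeddings of the same torus) and in checking that all oriented types are taken with respect to the identity orientation on $G'$ fills in details the paper leaves implicit, but does not change the approach.
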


\subsection{Varieties of embedding  $k$--tori}\label{subsec_var}

Let $T$ be a $k$-torus and  $\uPsi$ be a twisted root datum of $\Psi_0$ attached to $T$, i.e. \dskip the character group of $T$
 is isomorphic to the character group encoded in $\Psi$. In this section, we will define a $k$-variety $X$ such that the existence
 of a $k$-point of $X$ is equivalent to the existence of a $k$-embedding of $T$ into $G$ with respect to $\uPsi$.

We start with a functor. The \emph{embedding functor} $\mathcal{E}(G,\uPsi)$ is defined as follows:
for any $k$-algebra $C$,
\[ \mathcal{E}(G,\uPsi)(C)=\left\{\begin{array}{l}
\mbox{$f:T_{C}\hookrightarrow G_{C}$}\left|\begin{array}{l}\mbox{$
f$ is both a closed
immersion and a group }\\
 \mbox{homomorphism which induces an isomorphism
}\\
\mbox{$f^{\Psi}:\uPsi_{C}\xrightarrow{\sim}\uPsi(G_{C},f(T_{C}))$
such that }\\
\mbox{ $f^{\Psi}(\alpha)=\alpha\circ f^{-1}|_{f(T_{C'})}$ is in
$\uPsi(G_{C'},f(T_{C'}))$,}\\
\mbox{for all $C'$-roots $\alpha$, for all $C$-algebra
$C'$.}
\end{array}\right.\end{array}\right\}.\]
In fact, the functor $\mathcal{E}(\uPsi,G)$ is representable by a $k$-scheme (\cite{Le} Theorem 1.1).
Define the Galois set $\Isomext(\uPsi, G )$ by $\Isomext(\uPsi, G )= \Isomext(\uPsi, \uPsi(G,E) )$
where $E$ stands  for an arbitrary maximal $k$--torus of $G$.
Given  an orientation $v \in   \Isomext(\uPsi, G )(k)$,
we define the \emph{oriented
embedding functor} as follows: for any $k$-algebra $C$,
\[\mathcal{E}(G,\uPsi,v)(C)=\left\{\begin{array}{l}\mbox{$f:T_{C}\hookrightarrow G_{C}$}\left|
\begin{array}{l}\mbox{$f\in\mathcal{E}(G,\uPsi)(C)$,
and the image of $f^{\Psi}$ } \\
\mbox{ in ${\Isomext}(\uPsi,G)(C)$ is
$v$.}\end{array}\right.\end{array}\right\}.\]
\bigskip

We have the following result:

\begin{theorem} In the sense of the \'{e}tale
topology, $\mathcal{E}(G,\uPsi,v)$ is a left homogeneous space under the
adjoint action of $G$, and a torsor over the variety of the maximal tori of G under the
right $W(\uPsi)$-action. Moreover, $\mathcal{E}(G,\uPsi,v)$ is
representable by an affine $k$-scheme.
\end{theorem}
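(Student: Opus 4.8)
The plan is to exhibit the two group actions functorially, to reduce the homogeneity and torsor assertions to the separably closed case and descend, and to derive representability and affineness from the representability of the full embedding functor together with Matsushima's criterion. First I would record the actions. The adjoint action is $g\cdot f=\Int(g)\circ f$; since $\Int(g)$ induces an \emph{inner} automorphism of the root datum $\uPsi(G,f(T))$, it acts trivially on $\Isomext(\uPsi,G)$ and hence preserves the orientation, so it maps $\mathcal{E}(G,\uPsi,v)$ into itself. The right action of $W(\uPsi)$ is $f\cdot w=f\circ w$, where $w$ is regarded as an automorphism of $T$ through its action on the character lattice; because $\Isomext(\uPsi,\uPsi(G,T'))=\Isom(\uPsi,\uPsi(G,T'))/W(\uPsi)$, precomposition by $w$ leaves the class in $\Isomext$ unchanged, so this action too preserves $v$. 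Both are compatible with the projection $p\colon\mathcal{E}(G,\uPsi,v)\to\mathcal{T}$, $f\mapsto f(T)$, onto the variety $\mathcal{T}$ of maximal tori of $G$, the left action covering the natural $G$-action on $\mathcal{T}$ and the right action fixing the fibres. The conjugation stabiliser of an $f$ is $C_G(f(T))=f(T)$, a maximal torus, which already indicates that $\mathcal{E}(G,\uPsi,v)$ will be a twisted form of $G/S$ for a maximal $k$-torus $S\subset G$. Recall finally that the full functor $\mathcal{E}(G,\uPsi)$ is representable (\cite{Le}, Theorem 1.1), so that $p$ may be treated as a morphism of schemes once $\mathcal{E}(G,\uPsi,v)$ is shown to be representable.

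Next I would prove that $p$ is a $W(\uPsi)$-torsor for the étale topology. It is enough to base change to $k_s$ and descend, all the structures being defined over $k$. Over $k_s$ the torus $T$, the group $G$ and the datum $\uPsi$ split, and all maximal tori of $G_{k_s}$ are $G(k_s)$-conjugate, so $\mathcal{T}_{k_s}\cong(G/N_G(S))_{k_s}$ and the fibre of $p$ over a maximal torus $T'\subset G_{k_s}$ is, by construction, the set $\Isomint_v\bigl(\uPsi,\uPsi(G,T')\bigr)$. By the definition of $\Isomint_v$ as the preimage of the single point $v$ under $\Isom\to\Isom/W(\uPsi)=\Isomext$, this fibre is a coset, hence a $W(\uPsi)$-torsor for the right action; it is nonempty over $k_s$ because $v$ is a $k$-point and $\Isom\to\Isomext$ is surjective after base change to $k_s$. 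Spreading out a $k_s$-section over an étale neighbourhood then shows that $p$ is an étale-locally trivial $W(\uPsi)$-torsor over $\mathcal{T}$.

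To obtain the $G$-homogeneity I would argue transitivity of the $G_{k_s}$-action on $\mathcal{E}(G,\uPsi,v)(k_s)$. Since $G(k_s)$ is already transitive on $\mathcal{T}(k_s)$, it suffices to move within a single fibre $p^{-1}(T')$; and there conjugation by $N_G(T')(k_s)$ acts through $W(G,T')=N_G(T')/T'$ by post-composition on $f^\Psi$, an action that is simply transitive because the fibre is equally a torsor under $W(\uPsi(G,T'))=W(G,T')$. Thus the whole fibre is reached inside $G$, so $G(k_s)$ acts transitively and $\mathcal{E}(G,\uPsi,v)$ is a homogeneous space under $G$ in the étale topology with torus stabilisers, that is, a twisted form of $G/S$. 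The main obstacle is precisely this torsor step: verifying nonemptiness and local triviality of the fibres $\Isomint_v$, and identifying the abstract right $W(\uPsi)$-action with inner conjugation by the normaliser; once these are in place the homogeneity follows formally.

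It remains to address representability and affineness. The assignment $f\mapsto[f^\Psi]$ defines a morphism $\mathcal{E}(G,\uPsi)\to\Isomext(\uPsi,G)$ to a finite étale $k$-scheme, and $v\in\Isomext(\uPsi,G)(k)$ is an open and closed point; hence $\mathcal{E}(G,\uPsi,v)$, the fibre over $v$, is an open and closed subscheme of the representable scheme $\mathcal{E}(G,\uPsi)$, and is in particular representable by a $k$-scheme. For affineness I would not argue through $\mathcal{E}(G,\uPsi)$ (an open subscheme of an affine scheme need not be affine) but through the torsor structure: the base $\mathcal{T}\cong G/N_G(S)$ is affine by Matsushima's criterion, since $N_G(S)$ has the torus $S$ as identity component and is therefore reductive. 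As $p$ is a torsor under the finite group scheme $W(\uPsi)$ it is a finite, hence affine, morphism; an affine scheme over an affine base being affine, $\mathcal{E}(G,\uPsi,v)$ is affine, which completes the argument.
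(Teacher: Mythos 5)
Your argument is correct, but it is worth noting that the paper does not prove this theorem at all: its ``proof'' is a one-line citation of Theorem 1.6 of \cite{Le}, so what you have written is essentially a self-contained reconstruction of Lee's argument. Your route --- identifying the fibre of $f\mapsto f(T)$ over a maximal torus $T'$ with $\Isomint_v\bigl(\uPsi,\uPsi(G,T')\bigr)$, which is simultaneously a right $W(\uPsi)$-coset (precomposition) and a left $W(\uPsi(G,T'))$-coset (postcomposition, realised by conjugation by $N_G(T')(k_s)$), then deducing the torsor property and transitivity over $k_s$ and descending --- is exactly the right mechanism, and correctly separates the two distinct simply transitive actions on each fibre. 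The representability step (the fibre over the open-and-closed point $v$ of the finite \'etale scheme $\Isomext(\uPsi,G)$ inside the representable $\mathcal{E}(G,\uPsi)$) legitimately leans on \cite[Th.~1.1]{Le}, which the paper itself quotes, and your affineness argument via the finiteness of the torsor map over the affine variety of maximal tori is sound; the one refinement I would suggest is that ``Matsushima's criterion'' is a characteristic-zero statement, so in general you should either invoke Richardson--Haboush or, more simply, cite the fact that the scheme of maximal tori of a reductive group scheme is smooth and affine (SGA3, Exp.~XII), which the paper's framework already presupposes. What your approach buys is transparency: the reader sees why the orientation $v$ cuts the full embedding scheme down to a single $G$-orbit with torus stabilisers, whereas the paper's citation hides this entirely.
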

\begin{proof}
We refer to Theorem 1.6 of \cite{Le}.
\end{proof}

\begin{remark}{\rm The definition of varieties of embeddings  is quite abstract but is simplified a lot
if there is a $k$--embedding  $i:T \to G$ of oriented type isomorphic to $(\uPsi,v)$. Indeed in this case,
the $k$--variety $\mathcal{E}(G,\uPsi,v)$ is $G$--isomorphic to the homogeneous space $G/i(T)$ and we observe that the map
$G/i(T) \to G/N_G(i(T))$ is a $W_G(i(T))$--torsor over the variety of maximal tori of $G$.
 }
\end{remark}

\begin{remark} {\rm We sketch another way to prove Theorem \ref{theo_steinberg}.
With the notations of that result, let  $z \in Z^1(k,G')$ and put $G={_zG'}$.
Let $T $ be a maximal $k$--torus  of $G$ and consider the twisted root data
$\uPsi=\uPsi(G, T)$ attached to $T$. Let $v$ be the canonical element in ${\Isomext}(\uPsi,G)(k)$ and let $v'=c\circ v$,
where $c\in{\Isomext}(G,G')(k)$ corresponds to the canonical orientation $\uDyn(G)\cong\uDyn(G')$.
 We denote by $X$ (resp. \cskip $X'$) the $k$--variety of
oriented embeddings of $T$ in $G$ (resp. \cskip $G'$) with respect to $\uPsi$
and $v$ (resp. \cskip $v'$).
Note that $G'$ acts on $X'$ and we  have a natural isomorphism $X \simlgr {_zX'}$.
Theorem \ref{GKR}.(2) shows that $X'(k) \not = \emptyset$ and the choice of a $k$--point $x'$
of $X'$ defines a $G'$--equivariant isomorphism $G'/T \simlgr X'$. In the other hand,
the embedding $i$ defines  a $k$--point $x \in X(k)$.
Since $X \cong {_zX'}$, we have that  $ {_z(G'/T)}(k) \not = \emptyset$, hence the class
$[z] \in H^1(k,G)$ admits a reduction to
$i': T \hookrightarrow G'$  such that $\type_{can}(T,i)= \type_{id}(T,i') \in H^1(k,W')$.
}
\end{remark}

\bigskip

\section{Generalities on octonion algebras}\label{sec_octonions}

Let $C$ be an octonion algebra.
We denote by $G$ the automorphism group of $C$, it is a semisimple $k$--group of type
$G_2$.
We denote by $N_C$ the norm of $C$, it is $3$-fold Pfister form.
In particular, $N_C$ is hyperbolic (equivalently isotropic) iff
$G$ is split  (equivalently isotropic).

\subsection{Behaviour under field extensions}

If $l/k$ is a field extension of odd degree, the Springer odd extension theorem \cite[18.5]{EKM} implies that
$C$ is split if and only if $C_l$ is split.
More generally, we have the following criterion.

\begin{lemma}\label{lem_coprime}
Let $(k_j)_{j=1,...,n}$ be a  family of finite field extensions such that is $g.c.d.([k_j:k])$ is odd.
Then $C$ is split if and only if $C_{k_j}$ is split for $j=1,...,n$. \qed
\end{lemma}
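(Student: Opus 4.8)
The forward implication is immediate, since octonion algebras remain split under any base change. I focus on the converse, and the plan is to encode splitness of $C$ as the vanishing of a single cohomological invariant and then exploit the standard restriction--corestriction formula together with a B\'ezout argument.

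Recall that $C$ is split if and only if its norm form $N_C$, a $3$-fold Pfister form, is hyperbolic; and since a Pfister form is hyperbolic as soon as it is isotropic, this is in turn equivalent to the vanishing of the Arason invariant $e_3(N_C)=:\alpha \in H^3(k,\ZZ/2\ZZ)$. (In characteristic $2$ one replaces this by the degree-$3$ invariant of the quadratic Pfister form $N_C$ in Kato--Milne cohomology; the reference \cite{EKM} treats both cases.) Thus the statement reduces to: if $\Res_{k_j/k}(\alpha)=0$ in $H^3(k_j,\ZZ/2\ZZ)$ for every $j$, then $\alpha=0$.

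For each $j$, applying the corestriction and using $\Cor_{k_j/k}\circ\Res_{k_j/k}=[k_j:k]\cdot\mathrm{id}$ gives $[k_j:k]\,\alpha=\Cor_{k_j/k}(0)=0$. Writing the g.c.d. $d=\gcd_j [k_j:k]$ as an integer combination $d=\sum_j c_j [k_j:k]$ (B\'ezout), I obtain $d\,\alpha=0$. Since $\alpha$ is $2$-torsion and $d$ is odd, multiplication by $d$ is the identity on $\langle\alpha\rangle$, whence $\alpha=0$ and $C$ is split.

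The argument is essentially a repackaging of Springer's theorem: the hypothesis that $d$ is odd is equivalent to the existence of at least one index $j_0$ with $[k_{j_0}:k]$ odd, and for that single extension the Springer odd degree theorem \cite[18.5]{EKM} already forces $C$ to be split. Consequently the points requiring care are the input facts rather than any new obstacle: the equivalence between hyperbolicity of a $3$-fold Pfister form and the vanishing of its degree-$3$ cohomological invariant (valid in all characteristics by \cite{EKM}), and the compatibility of restriction--corestriction in the relevant cohomology theory. I would nevertheless record the corestriction-plus-B\'ezout proof as the main argument, since it uses the coprimality hypothesis directly and makes transparent why only the parity of the g.c.d. is relevant.
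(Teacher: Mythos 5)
Your argument is correct, but the paper's own proof is exactly the observation you relegate to your final paragraph: since $\gcd_j[k_j:k]$ is odd precisely when at least one $[k_j:k]$ is odd, one picks such a $j_0$ and invokes Springer's odd extension theorem \cite[18.5]{EKM} for that single extension — two lines, no cohomology. Your ``main'' route through the Arason invariant is a valid alternative, but it is strictly heavier: it needs the injectivity of $e_3$ on $3$-fold Pfister forms (Arason's theorem, and its Kato--Milne analogue in characteristic $2$), plus a corestriction satisfying $\Cor_{k_j/k}\circ\Res_{k_j/k}=[k_j:k]$ for \emph{arbitrary} finite extensions — in characteristic $2$ some of the $k_j$ of even degree may be inseparable, so one must work with transfers in Kato--Milne cohomology or Milnor $K$-theory rather than plain profinite group cohomology; this is true but requires citation. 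Moreover the B\'ezout step buys nothing beyond the parity observation, since $d\,\alpha=0$ for $2$-torsion $\alpha$ and $d$ odd is used only through the fact that $d$ is odd, which is already equivalent to the existence of a single odd-degree $k_{j_0}$. In short: your proof is right, but the restriction--corestriction machinery is a detour around a one-line reduction to Springer's theorem, which is what the paper does.
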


\begin{proof}
The direct sense is obvious.
Conversely, assume that $C_{k_j}$ is split for $j=1,...,n$. Then there exists an index $j$
such that $[k_j:k]$ is odd, hence $C$ splits.
\end{proof}

\begin{remark}
 { This is a special case of the following  more general result by Garibaldi-Hoffmann \cite[th. 0.3]{GH}
 answering positively Totaro's question.
Let $(k_j)_{j=1,...,n}$ be a  family of finite field extensions  and put  $d=g.c.d.([k_j:k])$.
Let $C$, $C'$ be Cayley $k$--algebras such that $C_{k_j}$ and $C'_{k_j}$ are isomorphic  for $j=1,...,n$, then there exists
a separable finite field extension $K/k$ of
 degree dividing $d$ such that $C_K$ is isomorphic to $C'_K$. This is the case of groups of type $G_2$ in that theorem
 which includes also the case of certains groups of type $F_4$ and $E_6$.
 }
\end{remark}

We recall also the behaviour with respect to quadratic \'etale algebras.

\begin{lemma}\label{lem_quadratic}
Let $k'/k$ be a quadratic \'etale algebra. Then
the following are equivalent:

\smallskip

(i) $C \otimes_k k'$ splits;

\smallskip

(ii) There is an isometry $(k',n_{k'/k}) \to (C,N_C)$  where $n_{k'/k}:k' \to k$
stands for the norm map;

\smallskip

(iii) There exists a embedding of unital composition $k$--algebra $k' \to C$.

\end{lemma}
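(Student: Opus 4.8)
The plan is to prove the three equivalences by establishing a cycle of implications, relying on the standard theory of composition algebras and Pfister forms. The key structural fact I would invoke is that an octonion algebra $C$ carries its norm form $N_C$, a $3$-fold Pfister form, and that $C$ is determined up to isomorphism by $N_C$; moreover composition subalgebras correspond to restrictions of $N_C$ to subspaces that are themselves nondegenerate and multiplicative. I expect to prove (i) $\Rightarrow$ (ii) $\Rightarrow$ (iii) $\Rightarrow$ (i), since each step then uses only one direction of the dictionary between subalgebras and subforms.

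For (i) $\Rightarrow$ (ii): the hypothesis is that $N_C$ becomes hyperbolic after base change to $k'$. Write $N_C = \langle\langle a,b,c\rangle\rangle$ as a Pfister form. For the quadratic \'etale algebra $k'$ I would let $q' = (k', n_{k'/k})$ be its norm form, a binary form representing $1$; over $k$ it is a $1$-fold Pfister form $\langle\langle d\rangle\rangle$ (with $k'=k[\sqrt{d}]$ in the field case, and $q'$ hyperbolic when $k'=k\times k$). The condition that $C_{k'}$ splits means $(N_C)_{k'}$ is hyperbolic, which by the standard Pfister-form criterion is equivalent to $q'$ being a subform of (equivalently, divides) $N_C$. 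Concretely, $(N_C)_{k'}$ hyperbolic forces $N_C \cong q' \otimes q''$ for some $2$-fold Pfister $q''$, and in particular $q'$ embeds isometrically into $N_C$. Since $q'$ is exactly $(k', n_{k'/k})$, this yields the desired isometry into $(C, N_C)$.

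For (ii) $\Rightarrow$ (iii): given an isometric embedding $(k', n_{k'/k}) \hookrightarrow (C, N_C)$, I would upgrade it to an algebra embedding. The element $1 \in k'$ has norm $1$, and after composing with an automorphism (or translating) I may assume $1 \in k'$ maps to the identity $1_C \in C$; here I would use that the automorphism group $G = \Aut(C)$ acts transitively enough on norm-one vectors, or more simply invoke the general principle that an isometry of the subspace respecting the unit and the norm extends to a subalgebra structure. The image $k' \cdot 1_C + (\text{image of the trace-zero part})$ is then a two-dimensional subspace closed under multiplication, making it a unital composition subalgebra isomorphic to $k'$. This is the step I expect to be the main obstacle, because passing from a bare isometry to a multiplicative (algebra) embedding requires knowing that the norm form together with the basepoint $1$ reconstructs the multiplication on the $2$-dimensional subspace; the clean way is to cite the standard classification of composition subalgebras of an octonion algebra (every nondegenerate subspace containing $1$ on which $N_C$ is multiplicative is a composition subalgebra), so the real content is matching the quadratic \'etale algebra structure of $k'$ with that induced by $N_C|_{k'}$.

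Finally (iii) $\Rightarrow$ (i): if $k' \hookrightarrow C$ is a unital composition subalgebra, then $N_C$ restricts to the norm $n_{k'/k}$ on $k'$, so $q' = (k',n_{k'/k})$ is a subform of $N_C$; equivalently the Pfister form $N_C$ is divisible by the $1$-fold Pfister form $q'$. A Pfister form divisible by $\langle\langle d\rangle\rangle$ becomes hyperbolic over $k[\sqrt d] = k'$ (and trivially so when $k'$ is split), hence $(N_C)_{k'}$ is hyperbolic and $C_{k'}$ splits by the criterion recalled at the start of Section~\ref{sec_octonions}. Throughout I would handle the split case $k' \cong k\times k$ in parallel: there $n_{k'/k}$ is the hyperbolic plane, condition (i) becomes the statement that $C$ itself is split, and all three conditions reduce to $N_C$ being isotropic, so the equivalences hold by the same Pfister-form facts.
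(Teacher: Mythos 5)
Your proposal is correct and follows essentially the same route as the paper: the subform/divisibility criterion for Pfister forms over a quadratic extension (the paper cites \cite[34.8]{EKM} together with multiplicativity of $N_C$) for (i)$\Rightarrow$(ii), Witt-type transitivity of $O(N_C)(k)$ on norm-one vectors to normalize the image of $1_{k'}$ to $1_C$ for (ii)$\Rightarrow$(iii), and isotropy of $N_C$ over $k'$ for (iii)$\Rightarrow$(i). The only cosmetic difference is that the paper first disposes of the case where $C$ itself is split, whereas you carry the split case of $k'$ along in parallel.
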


\begin{proof}
If $C$ is split, all three facts hold so that we can assume that $C$ is not split.

\smallskip

\noindent $(i) \Longrightarrow (ii):$
Since $C$ is not split, il follows that $k'$ is a field.
Since  $N_C$ is split over $k'$, there exists a non-trivial and  non-degenerate
symmetric bilinear form $B$ such that $B \otimes n_{k'/k}$ is a subform of $N_C$
 \cite[34.8]{EKM}.
Since $N_C$ is multiplicative,   there is an isometry $(k',n_{k'/k}) \to (C,N_C)$.

 \smallskip

 \noindent $(ii) \Longrightarrow (iii): $
 Since the orthogonal  group $O(N_C)(k)$ acts transitively on the sphere $\{ x \in  C \, \mid \, N_C(x)=1 \}$,
 we can assume that our isometry $(k',n_{k'/k}) \to (C,N_C)$ maps $1_{k'}$ to $1_C$.
 It is then a map of unital composition $k$--algebras.

 \smallskip

 \noindent $(iii) \Longrightarrow (i): $
 If $k'=k \times k$, then $N_C$ is isotropic and $C$ is split.
 Hence  $k'$ is a field and $N_C$ is $k'$--isotropic so that $C_{k'}$ is split.
 \end{proof}

\subsection{Cayley-Dickson process}

We know that $C$, up to $k$--isomorphism, can  be obtained by the Cayley-Dickson doubling process, that is
$C \cong C(Q,c)= Q \oplus Q a $ where $Q$ is a $k$-quaternion algebra and $c \in k^\times$ \cite[1.5]{SV}.
We denote by $\sigma_Q= \trd_Q - id_Q$ the canonical involution of $Q$ and
recall that the multiplicativity rule on $C$ (resp. \cskip the norm $N_C$, the canonical involution
$\sigma_C$) is given by
$$
(x+ya)(u+va)= (xu+ c \sigma_Q(v)y ) + ( vx+y \sigma_Q(u)) a  \quad (x,y,u,v \in Q);
$$
$$
N(x+y a)= N(x)- c N(y),
$$
$$
\sigma_C(x+y a)= \sigma_Q(x) - y a.
$$
Then $N_C$ is isometric to the 3-Pfister form $n_Q \otimes \langle 1, -c \rangle$
and that form determines the octonion algebra \cite[1.7.3]{SV}.
Also it provides an embedding $j$  of the  $k$-group $H(Q)= \bigl(\SL_1(Q) \times_k
\SL_1(Q) \bigr)/ \mu_2$ in $\Aut(C(Q,c))$.
This map is given by $(g_1,g_2). (q_1,q_2)= (g_1 \, q_1 \, g_1^{-1} ,g_2 \,q_2 \, g_1^{-1})$.
Another corollary of the determination of an octonion algebra by its norm is the following well-known fact.

\begin{corollary} \label{cor_SV} Let $C$ be a octonion $k$--algebra and let
$Q$ be a quaternion algebra. Then the following are equivalent

\smallskip

(i) There exists $c \in k^\times$ such that $C\cong C(Q, c)$;

\smallskip

(ii) There exists an isometry $(Q,N_Q) \to (C,N_C)$.

\end{corollary}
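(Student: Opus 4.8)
The plan is to follow the pattern of Lemma~\ref{lem_quadratic} and reduce everything to the norm form, using the identity $N_{C(Q,c)} \cong n_Q \otimes \langle 1, -c\rangle$ recorded just above together with the fact that an octonion algebra is determined up to isomorphism by its norm \cite[1.7.3]{SV}. First I would dispose of the split case: if $C$ is split then $N_C$ is hyperbolic, the $4$-dimensional form $N_Q$ embeds isometrically into the $8$-dimensional hyperbolic form $N_C$, so (ii) holds; and taking $c=1$ one sees that $n_Q \otimes \langle 1,-1\rangle$ is hyperbolic, so $C(Q,1)$ is split and hence isomorphic to $C$, giving (i). Thus one may assume $C$ is a division algebra, so that $N_C$ is an anisotropic $3$-fold Pfister form.

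For $(i) \Rightarrow (ii)$ I would simply exhibit the natural embedding. Writing $C \cong C(Q,c) = Q \oplus Qa$, the map $x \mapsto x + 0\cdot a$ realizes $Q$ as a unital subalgebra of $C$, and the multiplicativity formula displayed above gives $N_C(x) = N_Q(x)$ for $x \in Q$; hence this embedding is an isometry $(Q, N_Q) \to (C, N_C)$.

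The substance is the converse $(ii) \Rightarrow (i)$. Given an isometry $(Q, N_Q) \hookrightarrow (C, N_C)$, the quaternion norm $n_Q = N_Q$ is a $2$-fold Pfister subform of the $3$-fold Pfister form $N_C$ (and $Q$ is division since $N_C$ is anisotropic). The key step is then the divisibility property of Pfister forms: a $3$-fold Pfister form containing a given $2$-fold Pfister form as a subform is isometric to the product of that subform with a binary form, that is, $N_C \cong n_Q \otimes \langle 1, -c\rangle$ for some $c \in k^\times$ \cite{EKM}. Once this is in hand, $N_C \cong N_{C(Q,c)}$, and since an octonion algebra is determined up to isomorphism by its norm form \cite[1.7.3]{SV}, we conclude $C \cong C(Q,c)$, which is (i).

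The main obstacle is precisely this Pfister divisibility step: extracting the scalar $c$ from the bare subform relation $n_Q \subset N_C$. Concretely, writing $N_C \cong n_Q \perp \psi$ with $\dim \psi = 4$, one must show that $\psi$ is similar to $n_Q$, say $\psi \cong -c\, n_Q$ for a suitable $c$ represented by $\psi$; this rests on the roundness of Pfister forms and on the fact that a $4$-dimensional form of trivial discriminant is similar to a $2$-fold Pfister form. This is the same circle of ideas already invoked through \cite[34.8]{EKM} in the proof of Lemma~\ref{lem_quadratic}, applied now to the quaternionic (rank two) subform rather than to the binary norm $n_{k'/k}$.
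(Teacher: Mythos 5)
Your proof is correct and follows essentially the same route as the paper: the paper also reduces $(ii)\Rightarrow(i)$ to the linkage/divisibility property of Pfister forms \cite[24.1.(1)]{EKM} to write $N_C \cong N_Q \otimes \langle 1,-c\rangle$, and then concludes from the fact that an octonion algebra is determined by its norm form \cite[1.7.3]{SV}. Your separate treatment of the split case and the explicit embedding for $(i)\Rightarrow(ii)$ are harmless elaborations of steps the paper leaves implicit.
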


\begin{proof}
 $(i) \Longrightarrow (ii)$ is obvious. Assume that
 exists an isometry $(Q,N_Q) \to (C,N_C)$. By the linkage propery of Pfister forms \cite[24.1.(1)]{EKM}, there
 exists a bilinear $1$--Pfister form $\phi$ such that $N_C \cong N_Q \otimes \phi$.
 Since $N_C$ represents $1$, we can assume that $\phi$ represents $1$ so
 that $\phi \cong \langle 1,-c \rangle$. Therefore $C$ and $C(Q,c)$ have isometric norms and are isomorphic.
\end{proof}

\begin{remark}{\rm
In odd characteristic, Hooda provided  an alternative  proof, see  \cite[th. 4.3]{H} and also
a nice generalization ({\it ibid}, prop. 4.2).
}
\end{remark}

\begin{lemma}\label{lem_doubling} Let $C$ be a non-split octonion $k$--algebra. If $D\subseteq
C$ is a unital composition subalgebra and $u\in C\setminus D$ then $D \oplus  Du$
is a unital  composition subalgebra as well.
\end{lemma}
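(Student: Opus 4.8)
The plan is to reduce the given $u$ to an element orthogonal to $D$ for the norm form, and then to recognise $D\oplus Du$ as the Cayley--Dickson double of $D$ inside $C$. Since $C$ is non-split, its norm $N_C$ is an \emph{anisotropic} $3$-Pfister form, so the restriction of $N_C$ to any subspace is anisotropic, hence non-degenerate. Writing $b_C(x,y)=N_C(x+y)-N_C(x)-N_C(y)$ for the polar form, anisotropy yields the orthogonal decomposition $C=D\perp D^{\perp}$. First I would decompose $u=d+a$ with $d\in D$ and $a\in D^{\perp}$; as $u\notin D$ we have $a\neq 0$, so $N_C(a)\neq 0$. Because $D$ is a subalgebra, $D\,d\subseteq D$, so for every $d'\in D$ one has $d'u=d'd+d'a$ with $d'd\in D$, whence $Du\subseteq D+Da$; conversely $d'a=d'u-d'd\in D+Du$, so $Da\subseteq D+Du$. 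This gives the identity of subspaces $D+Du=D+Da$, and it suffices to treat $D+Da$ with $a\in D^{\perp}\setminus\{0\}$.

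Next I would check that the sum is direct and orthogonal. Using the adjunction identity $b_C(xa,z)=b_C(a,\sigma_C(x)\,z)$ valid in any composition algebra (where $\sigma_C$ is the conjugation), together with the fact that $D$ is stable under $\sigma_C$ (indeed $\sigma_C(x)=t_C(x)\cdot 1-x$ with $t_C(x)=b_C(x,1)\in k$ and $1\in D$), we get $\sigma_C(x)\,z\in D$ for $x,z\in D$; since $a\perp D$ this forces $b_C(xa,z)=0$, i.e. $Da\subseteq D^{\perp}$. By non-degeneracy $D\cap D^{\perp}=0$, so $D\cap Da=0$ and the sum $D\oplus Da$ is direct. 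Moreover $x\mapsto xa$ is injective on $D$ (if $xa=0$ then $N_C(x)N_C(a)=N_C(xa)=0$, forcing $x=0$), so $\dim_k(D\oplus Da)=2\dim_k D$. The same injectivity for $u$ gives $\dim_k Du=\dim_k D$, and comparing with $D+Du=D\oplus Da$ forces $D\cap Du=0$; hence the sum $D\oplus Du$ in the statement is genuinely direct and coincides with $D\oplus Da$.

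Finally I would identify $D\oplus Da$ as a composition subalgebra through the Cayley--Dickson process of \cite[1.5]{SV}. Since $t_C(a)=b_C(a,1)=0$ we have $a^2=-N_C(a)\cdot 1\in D$, and the doubling product rules $x(ya)=(yx)a$, $(ya)x=(y\,\sigma_C(x))a$ and $(xa)(ya)=-N_C(a)\,\sigma_C(y)\,x$ (valid since $\dim_k D\le 4$ makes $D$ associative) show that $D\oplus Da$ is closed under multiplication, contains $1$, and that $N_C$ restricts on it to $N_D\otimes\langle 1,N_C(a)\rangle$, a non-degenerate form as a subform of the anisotropic $N_C$. Hence $D\oplus Du=D\oplus Da$ is a unital composition subalgebra. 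I expect the multiplicative closure --- i.e. verifying the three product formulas --- to be the only genuinely computational point; it is exactly the content of \cite[1.5]{SV}, and in the extreme case $\dim_k D=4$ it is vacuous, since then $\dim_k(D\oplus Da)=8$ already forces $D\oplus Da=C$.
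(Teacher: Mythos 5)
Your argument is essentially the paper's own proof: both replace $u$ by its component $a=u-v$ orthogonal to $D$ (using that $b_C$ is regular on $D$), use anisotropy of $N_C$ to get $N_C(a)\neq 0$, invoke the Cayley--Dickson doubling \cite[Prop.\ 1.5.1]{SV} to see that $D\oplus Da$ is a unital composition subalgebra, and conclude via $D\oplus Da=D\oplus Du$. The only difference is that you verify the directness of the sum and the multiplication rules explicitly where the paper simply cites \cite{SV}; this is correct and harmless.
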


\begin{proof} Since $C$ is non-split the corresponding norm map $N_C$
is anisotropic. Let $b_C$ be the polar map of $N_C$. Since the map
$x\mapsto b_C(u,x)$ is linear and the restriction of $b_C$ on $D\times
D$ is regular, there is $v\in D$ such that $b_C(v,x)=b_C(u,x)$
$\forall x\in D$. Let $u'=u-v$. We have $b_C(u',x)=b_C(v,x)-b_C(u,x)=0$
$\forall x\in D$ so $u'\in D^\perp$. Since $v\in D$, $u\notin D$ we
have $u'\neq 0$ so $N_C(u')\neq 0$. By the doubling process \cite[Proposition 1.5.1]{SV}
 we have that $D\oplus D u'$ is a unital composition
subalgebra of $C$. But $u'=u-v$ and $v\in D$ so $D\oplus Du'=D\oplus
Du$.
\end{proof}

\subsection{On the dihedral group, I }\label{sub_diedral}

In this case $W_0= \Aut(\Psi_0)$ and $W_0=D_6= \ZZ/6\ZZ \rtimes \ZZ/2\ZZ= C_2 \times S_3 $
is the dihedral group of order $12$.
More precisely  $C_2=\langle c\rangle$ stands for its center.
The right way to see it is by its action on the root system
 $\Psi(G_0,T_0) \subset \widehat T_ 0= \ZZ \alpha_1 \oplus \ZZ \alpha_2 = \ZZ^2$
 as provided by the following picture

\vskip14mm

$$
\begin{picture}(0,-0)
\put(0,0){\circle*{3}}
\put(-30,0){\circle*{3}}
\put(0,0){\line(-1,0){30}}
\put(30,0){\circle*{3}}
\put(0,0){\line(1,0){30}}
\put(35,-5){$^{\alpha_1}$}
\put(15,26){\circle*{3}}
\put(0,0){\line(3,5){15}}
\put(-16,26){\circle*{3}}
\put(0,0){\line(-3,5){15}}
\put(15,-26){\circle*{3}}
\put(0,0){\line(3,-5){15}}
\put(-15,-26){\circle*{3}}
\put(0,0){\line(-3,-5){15}}
\put(0,52){\circle*{3}}
\put(0,0){\line(0,1){52}}
\put(-8,50){$^{\widetilde \alpha}$}
\put(0,-52){\circle*{3}}
\put(0,0){\line(0,-1){52}}
\put(45,26){\circle*{3}}
\put(0,0){\line(5,3){45}}
\put(45,-26){\circle*{3}}
\put(0,0){\line(5,-3){45}}
\put(-45,-26){\circle*{3}}
\put(0,0){\line(-5,-3){45}}
\put(-45,26){$^{\alpha_2}$}
\put(-44,26){\circle*{3}}
\put(0,0){\line(-5,3){45}}
\end{picture}
$$

\vskip20mm

\noindent where $\alpha_1, \alpha_2$ stand for a base of the root system $G_2$ and
$\widetilde \alpha= 3 \alpha_1 + 2 \alpha_2$.

Let $\{\varepsilon_i\}_{i=1}^{3}$ be an orthonormal basis  of $\QQ^3$. We can view the root space of $G_2$ as the
hyperplane in $\QQ^3$ defined by
$\Bigl\{\underset{i=1}{\overset{3}{\sum}}\xi_i\varepsilon_i \, | \, \underset{i=1}{\overset{3}{\sum}}\xi_i=0 \Bigr\}$,
and identify $\alpha_1$, $\alpha_2$ with $\varepsilon_1-\varepsilon_2$ and $-2\varepsilon_1+\varepsilon_2+\varepsilon_3$
respectivel (\cite{Bou} Planche IX).
For a root $\alpha$, let $s_\alpha$ be the reflection orthogonal to $\alpha$. Under the above identification, the element
$c=s_{2\alpha_1+\alpha_2}s_{\alpha_2}$ acts on the roots by $-id$ and $S_3=\langle s_{\alpha_1},s_{2\alpha_1+\alpha_2}\rangle$ acts
by permuting the $\varepsilon_i$'s. Note that although $s_{2\alpha_1+\alpha_2}s_{\alpha_2}$ acts on the subspace
$\Bigl\{\underset{i=1}{\overset{3}{\sum}}\xi_i\varepsilon_i \, | \, \underset{i=1}{\overset{3}{\sum}}\xi_i=0 \Bigr\}$ by $-id$,
$s_{2\alpha_1+\alpha_2}s_{\alpha_2}$ does not act as $-id$ on $\{\varepsilon_i\}_{i=1}^{3}$.

Also we  observe that  $\widehat T_0$ is a sublattice of index $2$ of
the lattice $\ZZ \frac{\alpha_1}{2} \oplus \ZZ \frac{\widetilde \alpha}{2}$.
It is related to the fact that the morphism $\SL_2 \times \SL_2 \to G_0$ defined by
the coroots $\alpha_1^\vee$ and   $\tilde{\alpha}^\vee$ has kernel the diagonal
subgroup $\mu_2$.

\begin{remarks}\label{rem_center}
{\rm (a) In the $G_2$ root system, for any long root $\beta$ and any short
root $\alpha$ orthogonal to $\beta$, we have $s_\alpha\circ s_\beta=c$. Also we  observe that  $\widehat T_0$ is a sublattice of index $2$ of
the lattice $\ZZ \frac{\alpha}{2} \oplus \ZZ \frac{\beta}{2}$.
It is related to the fact that the morphism $\SL_2 \times \SL_2 \to G_0$ defined by
the coroots $\alpha^\vee$ and   $\beta^\vee$ has kernel the diagonal
subgroup $\mu_2$.

\noindent (b) The roots $ \alpha_1$, $\widetilde \alpha$ generate a closed symmetric subsystem of type $A_1 \times A_1$ of
$G_2$. Any subroot system  (not necessarily close) of $G_2$ which is of  type $A_1 \times A_1$ is a $W_0$-conjugate  of the previous one.
}
\end{remarks}

\subsection{Subgroups of type $A_1 \times A_1$}
Given an octonion $k$-algebra $C$, 
we relate the Cayley-Dickson's decomposition with subgroups of $G=\Aut(C)$.

\begin{lemma} \label{lem_maximal} Let  $H$ be a semisimple $k$--subgroup
of $G$ of type $A_1 \times A_1$.
 Then there exists a quaternion algebra $Q$, $c  \in k^\times$, an
isomorphism $C \cong C(Q,c)$ and an isomorphism  $H \simlgr H(Q)$ such that the following
diagram commutes
\[
 \xymatrix{
H \enskip \ar[d]^{\wr}  \ar@{^{(}->}[rr] && \enskip G \ar[d]^{\wr} \\
H(Q) \enskip \ar@{^{(}->}[rr]^{j} && \enskip \Aut(  C(Q,c)).
}
\]
\end{lemma}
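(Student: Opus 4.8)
The plan is to reduce the statement to producing the \emph{right} quaternion subalgebra $Q\subseteq C$: once $Q$ is found, the Cayley--Dickson process writes $C\cong C(Q,c)$ and furnishes the standard embedding $j\colon H(Q)\hookrightarrow\Aut(C(Q,c))$, and it then remains to check that $H$ and $j(H(Q))$ are the \emph{same} subgroup of $G$. I would recover $Q$ intrinsically from $H$ as the fixed algebra of one of its two $A_1$-factors, the point being that the octonion structure breaks the symmetry between the two factors.

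First, over $k_s$ I would write $H_{k_s}=H_1\cdot H_2$ as the almost-direct product of its two simple factors. By Remark~\ref{rem_center}(b) the pair $H_{k_s}\subseteq G_{k_s}$ is $G(k_s)$-conjugate to the standard one attached to the split quaternion algebra $Q_0$, so it suffices to compute the fixed algebras $C^{H_i}$ in that model $C(Q_0,c)=Q_0\oplus Q_0a$. A direct computation there shows that the factor acting by conjugation on the first summand has one-dimensional fixed algebra $k\cdot 1$, whereas the factor acting trivially on $Q_0$ has four-dimensional fixed algebra $Q_0$. Thus exactly one factor, say $H_2$, satisfies $\dim_k C^{H_2}=4$ and the other satisfies $\dim_k C^{H_1}=1$. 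Since these dimensions differ, $\Gamma_k$ cannot interchange $H_1$ and $H_2$; hence both factors are defined over $k$, and $Q:=C^{H_2}$ is a $k$-subspace. Being the fixed-point set of a group of algebra automorphisms, $Q$ is a unital $k$-subalgebra; the $H$-invariant form $N_C$ is nondegenerate on each isotypic component, and since $Q$ is the sum of the trivial and the three-dimensional components it is orthogonal to the remaining summand, so $N_C|_Q$ is nondegenerate. A four-dimensional unital composition subalgebra is a quaternion algebra.

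Next I would apply the doubling process (Lemma~\ref{lem_doubling}, or \cite[1.5]{SV}): choosing $u\in Q^\perp$ with $N_C(u)\neq 0$ gives $C=Q\oplus Qu\cong C(Q,c)$ with $c$ determined by $N_C(u)$, hence the embedding $j\colon H(Q)\hookrightarrow\Aut(C(Q,c))\cong G$. To finish I would study $\Stab_G(Q)$. Restriction to $Q$ gives a homomorphism $\Stab_G(Q)\to\Aut(Q)=\PGL_1(Q)$ (Skolem--Noether), whose kernel consists of automorphisms fixing $Q$ pointwise; such an automorphism is a left $Q$-linear isometry of the rank-one module $Q^\perp$, so the kernel embeds into $\SL_1(Q)$ and $\dim\Stab_G(Q)\le 6$. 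On the other hand $j(H(Q))\subseteq\Stab_G(Q)$, and $H$ preserves $Q$ as well, since $H_2$ fixes $Q=C^{H_2}$ pointwise while $H_1$ commutes with $H_2$ and so stabilizes $C^{H_2}$. Both $H$ and $j(H(Q))$ are therefore connected of dimension $6$ inside a group of dimension $\le 6$, whence $H=j(H(Q))=\Stab_G(Q)^\circ$, and $j^{-1}|_H\colon H\xrightarrow{\sim}H(Q)$ is the desired isomorphism making the diagram commute.

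The main obstacle is the step that makes $Q$ canonical: verifying the asymmetry of the two fixed algebras (which forces both factors, and hence $Q$, to descend to $k$) and checking that $C^{H_2}$ is a genuine nondegenerate four-dimensional quaternion subalgebra. With that in hand the identification $H=\Stab_G(Q)^\circ=j(H(Q))$ is forced by the dimension bound on $\Stab_G(Q)$, the only other point requiring care being the control of the kernel of the restriction map to $\Aut(Q)$.
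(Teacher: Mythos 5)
Your proof is correct and follows essentially the same route as the paper: both recover $Q$ intrinsically as the fixed subalgebra of one of the two $A_1$-factors of $H$, descend it to $k$, apply the doubling process, and conclude $H=H(Q)$ by a dimension count. The only (minor) divergences are that the paper distinguishes the two factors via root lengths (short vs.\ long, which Galois preserves) where you use the Galois-invariant asymmetry $\dim C^{H_1}=1\neq 4=\dim C^{H_2}$, and that the paper cites $H(Q)=\Aut(C,Q)$ from Springer--Veldkamp where you rederive the bound $\dim\Stab_G(Q)\le 6$ by hand.
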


\begin{proof}
We start with a few  observations  on the split case $G=G_0=\Aut(C_0)$ where we have the $k$--subgroup  $H_0=(\SL_2 \times \SL_2)/\mu_2$
acting on $C_0$. The root subsystem $\Phi(H_0,T_0)$ is $\ZZ {\alpha_1} \oplus \ZZ {\widetilde \alpha}$
so that the first (resp. \cskip the second) factor $\SL_2$ of $H_0$ corresponds to a short (resp. \cskip long) root.
We denote by $H_{0,<} \cong \SL_2$ (resp. \cskip $H_{0,>}$) the ``short'' subgroup (resp. \cskip the ``long'' one)  of $H_0$.
Taking the decomposition $C_0= M_2(k) \oplus M_2(k)_\sharp$, the point is that we have $M_2(k)= (C_0)^{H_{0,>}}$.
In other words, we can recover
the composition subalgebra $M_2(k)$ of $C_0$ from $H_0$.

We come now to our problem. We are given a $k$--subgroup $H$ of $G=\Aut(C)$ of type $A_1 \times A_1$.
Let $T$ be a maximal $k$--torus of $H$.
Then the root system $\Phi(H_{k_s},T_{k_s})$ is a subsystem of $\Phi(G_{k_s},T_{k_s})\cong \Psi_0$ of type $A_1 \times A_1$
hence $W_0$--conjugated to the standard one (Remark \ref{rem_center}.(b)).
Since the Galois action preserves the length of a root, it follows
that we can define by Galois descent the $k$--subgroups $H_{<}$ and $H_{>}$ of $H$.
We define then  $Q= (C)^{H_{>}}$. By Galois descent, it is a quaternion subalgebra of $C$ which is normalized by
$H$. It leads to a Cayley-Dickson decomposition $C = Q \oplus L$ where $L$ is the orthogonal complement of
$Q$ in $C$. Then $L$ is a right $Q$--module and we choose $a \in L$ such that $L=Qa$.
The $k$--subgroup $H(Q)$ of $\Aut(C)$ is nothing but $\Aut(C,Q)$ \cite[\S 2.1]{SV}, so we have $H \subseteq H(Q)$.
For dimension reasons, we conclude that $H=H(Q)$ as desired.
\end{proof}

\section{Embedding a torus in a group of type $G_2$}\label{sec_embed}

We assume that $G$ is a semisimple $k$--group of type $G_2$.
As before in section \ref{sec_image}, we denote by $G_0$ its split form; $T_0$, $W_0$, etc...

\subsection{On the dihedral group, II }
We continue to discuss the action of the dihedral group $W_0$ (of order 12) on the root system
of type $G_2$  started  in \S \ref{sub_diedral}.
Let $\underset{i=1}{\overset{3}{\oplus}}\ZZ\epsilon_i$ be a $W_0$-lattice, where the $S_3$-component
of $W_0$ acts by permuting the
 $\epsilon_i$'s and the center acts by $-id$. Note that $G_0$ is of type $G_2$, so $G_0$ is both adjoint and
simply connected and the dual group of $G_0$ is isomorphic to $G_0$ itself.
Hence we have the following exact sequence of $W_0$-lattices, where $W_0$ acts on $\ZZ$ through its center $\ZZ/2\ZZ$ by $-id$:
\[
\xymatrix@C=0.5cm{
  0 \ar[r] & \widehat T_0 \ar[r]^{f \quad} & \enskip  \underset{i=1}{\overset{3}{\oplus}}\ZZ\epsilon_i \enskip \ar[rr]^{\quad deg} &&\ZZ \ar[r] & 0 },
\]
where $f(\alpha_1)=\epsilon_1-\epsilon_2$ and $f(\alpha_2)=-2\epsilon_1+\epsilon_2+\epsilon_3$.
We also consider its dual sequence:
\[
\xymatrix@C=0.5cm{
  0 \ar[r] & \ZZ \ar[r]&  \underset{i=1}{\overset{3}{\oplus}}\ZZ\epsilon_i^{\vee} \ar[r] & {\widehat T_0}^{\vee}\simeq \widehat T_0 \ar[r] & 0 }.
\]

\subsection{Subtori}\label{sub_tori}
Keep the notations in Section~\ref{sub_diedral}.
Let us fix an isomorphism $$\chi:\ZZ/2\ZZ\times S_3\to \langle c\rangle\times \langle s_{\alpha_1},s_{2\alpha_1+\alpha_2}\rangle=W_0,$$ where
$\chi((-1,1))=c$, $\chi((1,(12)))=s_{\alpha_1}$ and $\chi((1,(23)))=s_{2\alpha_1+\alpha_2}$.

We identify $\ZZ/2\ZZ\times S_3$ with $W_0$ by $\chi$ in the rest of this paper.
Under this identification, we have $$
H^1(k,W_0)= H^1(k,\ZZ/2\ZZ) \times H^1(k,S_3).
$$
Hence a  class of $H^1(k,W_0)$ is  represented uniquely  (up to $k$--isomorphism) by a
couple $(k',l)$ where $k'$ is a quadratic \'etale
algebra of $k$ and $l/k$ is a cubic \'etale algebra of $k$.

Given such a couple  $(k',l)$,  we denote by $\uPsi_{(k',l)}=[(k',l)]\wedge^{W_0}\Psi_0$ the associated
twisted root datum.
Let $l'= l\otimes_k k'$ and define the $k$--torus
$$
T^{(k',l)}=
\Ker\Bigl(  R_{k'/k} \bigl( R^1_{l'/k'}(\GG_{m,l'}) \bigr)
\xrightarrow{ \enskip N_{k'/k} \enskip} R^1_{l/k}(\GG_{m,l})     \Bigr)  .
$$

In the following, we prove that the torus encoded in $\uPsi_{(k',l)}$ is indeed $T^{(k',l)}$.
However, we should keep in mind that two non-isomorphic root data $\uPsi$ may encode the same torus (Remark \ref{rootdata_tori}).

\begin{lemma}\label{lem_shape} Let  $T$ be a $k$--torus of rank two and let $i:T \to G$ be a $k$-embedding
such that $\type(T,i)= \bigl[ (k',l) \bigr]$.

\smallskip

\noindent (1) The $k$--torus $T$ is $k$--isomorphic to
 $T^{(k',l)}$.

 \smallskip

\noindent (2) If there exists a quadratic \'etale algebra $l_2$ such that
$l= k \times l_2$, then
there is a $k$--isomorphism
$$
T \cong \Bigl(R^1_{k_1/k}(\GG_m) \times_k R^1_{k_2/k}(\GG_m) \Bigr) / \mu_2
$$
where $k_1, k_2$ are quadratic \'etale algebras such that $k_2=k'$ and
$[k_1]=[k_2 ] + [l_2] \in H^1(k, \ZZ/2\ZZ)$.
\end{lemma}

\begin{proof} (1) We have $W_0 = \ZZ /2 \ZZ \times S_3$ and from the subsection \ref{sub_diedral}, we have
a $W_0$--resolution
\[
\xymatrix@C=0.5cm{
  0 \ar[r] & \ZZ \ar[r]&  \underset{i=1}{\overset{3}{\oplus}}\ZZ\epsilon_i^{\vee} \ar[r] & \widehat T_0 \ar[r] & 0 }.
\]
It follows that $\widehat T_0$ is isomorphic to the $W_0$-module
 $\underset{i=1}{\overset{3}{\oplus}}\ZZ\epsilon_i^{\vee}/\langle(1,1,1)\rangle$.
Let $N$ be the $W_0$-lattice $\underset{i=1}{\overset{3}{\oplus}}\ZZ e_i/\langle(1,1,1)\rangle$
where $S_3$ acts by permuting the indices and $\ZZ/2\ZZ$ acts trivially.
Note that as $\ZZ$-lattices, we can identify $N$ with $\widehat T_0$.
Let $M=N\oplus N$ and equip $M$ with a $W_0$-action: $S_3$ acts on $N$ diagonally and $\ZZ/2\ZZ$ acts
on $M$ by exchanging the two copies of $N$.
Embed $N$ diagonally into $M$  and we get the following exact sequence of $W_0$-modules:
\[
\xymatrix@C=0.5cm{
  0 \ar[r] & N \ar[rr]^{f \qquad} && M=N\oplus N \ar[rr]^{\qquad g} && \widehat T_0 \ar[r] & 0 },
\]
where $f(x)=(x,x)$ and $g(x,y)=x-y$.  After twisting the above exact sequence  by the $W_0$--torsor attached to $(k',l)$ and
 taking the corresponding tori
we have the following:
\[
\xymatrix@C=0.5cm{
  1 \ar[r] &  T \ar[r] &  R_{k'/k} \bigl( R^1_{l'/k'}(\GG_{m,l'})\bigr) \ar[rr]^{\qquad n_{k'/k}} &&  R^1_{l/k}(\GG_{m,l}) \ar[r] & 1 }.
\]
Hence $T$ is nothing but the $k$--torus $T^{(k',l)}$.

\smallskip

\noindent (2) If $l =k\times l_2$, then there is an injective homomorphism $\iota:\ZZ/2\ZZ\rightarrow S_3$ and a class
$[z]\in\im(\iota_*:H^1(k,\ZZ/2\ZZ)\rightarrow H^1(k,S_3))$ such that $l$ corresponds to $[z]$. Let $\alpha$ be a short root such that the corresponding
reflection $s_\alpha$ is $\iota(-1)$, and let $\beta$ be a long root orthogonal to $\alpha$. As we mentioned in Remark~\ref{rem_center}.(a),  the center of $W_0$
is generated by $s_\alpha\circ s_\beta$. Therefore, the image of the map
$$\Id\times \iota:\ZZ/2\ZZ\times \ZZ/2\ZZ\hookrightarrow\ZZ/2\ZZ\times S_3=W_0$$ is generated by $\{s_\alpha,\ s_\beta\}$. Let us call it
$W^{(k',l_2)}$.
Let $H_0\simeq\bigl(\SL_2 \times_k\SL_2\bigr)/ \mu_2$ be the subgroup of $G_0$ generated by $T_0$ and the root
groups associated to $\pm\alpha$ and $\pm\beta$.
Then $H_0$ is of type $A_1\times A_1$ and the Weyl group of $H_0$ with respect
to $T_0$ is exactly $W^{(k',l_2)}$.
Hence there is $[x]\in\im(H^{1}(k,N_{H_0}(T_0))\rightarrow H^{1}(k,G_0))$ such that $(G,i(T))$ is isomorphic
to $\leftidx_{x}(G_0,T_0)$. Moreover, the embedding $i$ factorizes through $H=\leftidx_{x}(H_0)$.
Let the first (resp. \cskip second) copy of $\SL_2$ of $H_0$ correspond to the root group $\pm\beta$ (resp. \cskip $\pm\alpha$).
Let $\pi$ be the projection from $N_{H_0}(T_0)$ to $N_{H_0}(T_0)/T_0=W^{(k',l_2)}$.
Since $([k'],[l_2])\in H^1(k,\langle s_{\beta}\circ s_{\alpha}\rangle)\times H^1(k,\langle s_\alpha\rangle)=H^1(k,W^{(k',l_2)})$
is equal to
$([k'],[k']+[l_2])\in H^1(k,\langle s_{\beta}\rangle)\times H^1(k,\langle s_\alpha\rangle)=H^1(k,W^{(k',l_2)})$. Under the above
identification,
we have $\pi_*([x])=\bigl([k']+[l_2],[k'] \bigr)\in H^1(k,\langle s_{\alpha}\rangle)\times H^1(k,\langle s_\beta\rangle)$ .
Therefore we have
$$T\simeq \leftidx_{x}(T_0) \cong \Bigl(R^1_{k_1/k}(\GG_m) \times_k R^1_{k_2/k}(\GG_m) \Bigr) / \mu_2,$$ where
$[k_2]=k'$ and $[k_1]=[k_2]+[l_2]$.
\end{proof}

\begin{remark}\label{rootdata_tori} {\rm A natural question is whether the class of $[(k',l)]$ is determined
by the isomorphism class of the torus $T^{(k',l)}$ as $k$--torus. It is not  the case,
there are indeed examples of non equivalent pairs $(k',l)$ and $(k'_\sharp,l_\sharp)$ such that the $k$--tori
$T^{(k',l)}$ and $T^{(k'_\sharp,l_\sharp)}$ are  isomorphic whenever the field $k$ admits a biquadratic field extension $k_1 \otimes_k k_2$.
We put then $k_{1, \sharp}=k_2$ and $k_{2, \sharp}=k_1$.
With the notations of the proof of Lemma \ref{lem_shape}.(2),
we consider the $k$--tori $T=\Bigl(R^1_{k_1/k}(\GG_m) \times_k R^1_{k_2/k}(\GG_m) \Bigr) / \mu_2$ and
$T_\sharp= \Bigl(R^1_{k_{1, \sharp}/k}(\GG_m) \times_k R^1_{k_{2, \sharp}/k}(\GG_m) \Bigr) / \mu_2$.
Then the $k$--tori $T$ and $T_\sharp$ are obviously $k$--isomorphic. However, the root data $\uPsi_{(k',l)}$ and $\uPsi_{(k'_\sharp,l_\sharp)}$ are not isomorphic as $k_2 \not \cong k_{2, \sharp}=k_1$.

Since the pointed set $H^1(k,\GL_2(\ZZ))$ classifies two dimensonal $k$--tori,  the map  $H^1(k,W_0) \to  H^1(k,\GL_2(\ZZ))$ is in this case not injective.
It is due to the fact that the normalizer of
$C_2 \times (1 \times \ZZ/2\ZZ)$ in $\GL_2(\ZZ)$ is larger than the normalizer in $W_0$.

}
\end{remark}

We deal now with the Galois cohomology of those tori.

\begin{lemma}\label{lem_seq}
(1) We have an exact sequence
$$
0 \to \ker\bigl( l^\times \to k^\times \bigr)  / N_{l'/l}\Bigl( \ker\bigl( (l')^\times \xrightarrow{n_{l'/k'}} (k')^\times \bigr) \Bigr)  \, \to \,
H^1(k, T^{(k',l)}) \to
$$
\vskip-4mm
$$
(k')^\times/ N_{l'/k'}\bigl( (l')^\times \bigr)  \xrightarrow{ \enskip n_{k'/k} \enskip}
k^\times/ N_{l/k}(l^\times) \to 0
$$
and the map $n_{k'/k}$ admits a section.
 \smallskip

\noindent (2) Assume that $k'$ and $l$ are fields. Then $H^1\bigl(k,  \widehat{(T^{(k',l)})}^0 \bigr) =0$.
\end{lemma}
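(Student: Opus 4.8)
The plan for (1) is to run the long exact cohomology sequence of the defining extension of tori $1\to T^{(k',l)}\to A\to B\to 1$, with $A=R_{k'/k}\bigl(R^1_{l'/k'}(\GG_m)\bigr)$ and $B=R^1_{l/k}(\GG_m)$, which was produced in the proof of Lemma~\ref{lem_shape}.(1), and to compute each term by Shapiro's lemma together with Hilbert~90. First I would record the $k$-points $A(k)=\ker\bigl((l')^\times\xrightarrow{n_{l'/k'}}(k')^\times\bigr)$ and $B(k)=\ker\bigl(l^\times\xrightarrow{N_{l/k}}k^\times\bigr)$, the map $A(k)\to B(k)$ being induced by $N_{l'/l}$. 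Next, applying cohomology to $1\to R^1_{l'/k'}(\GG_m)\to R_{l'/k'}(\GG_m)\to\GG_{m,k'}\to 1$ over $k'$, using Shapiro and $H^1(l',\GG_m)=0$, identifies $H^1(k,A)\cong (k')^\times/N_{l'/k'}\bigl((l')^\times\bigr)$, and likewise $H^1(k,B)\cong k^\times/N_{l/k}(l^\times)$. The truncated sequence then reads $0\to B(k)/N_{l'/l}\bigl(A(k)\bigr)\to H^1(k,T^{(k',l)})\to \ker\bigl(H^1(k,A)\to H^1(k,B)\bigr)$, whose left-hand term is exactly the displayed quotient.

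The step needing care is to identify the map $H^1(k,A)\to H^1(k,B)$ with $n_{k'/k}$. I would deduce this from the commutative ladder comparing the two norm-one extensions --- the top one obtained by applying $R_{k'/k}$ to the $k'$-extension above, with right-hand vertical arrow $N_{k'/k}$ --- and from the functoriality of the connecting homomorphisms through which both identifications of $H^1$ were made. It then remains to prove that $n_{k'/k}\colon (k')^\times/N_{l'/k'}\bigl((l')^\times\bigr)\to k^\times/N_{l/k}(l^\times)$ is surjective and even admits a section; this is what closes the sequence with a zero on the right. For this I would introduce the map $\iota$ induced by $k\hookrightarrow k'$, which is well defined on the quotients because $N_{l/k}(z)$ is sent to $N_{l'/k'}(z)$, and observe that $n_{k'/k}\circ\iota$ is multiplication by $[k':k]=2$. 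Since $N_{l/k}(a)=a^{3}$ for $a\in k^\times$, the target is annihilated by $3$, so multiplication by $2$ is invertible on it and $\iota$ composed with that inverse is the wanted section. The genuine obstacle in (1) is precisely this interplay of the coprime degrees $2$ and $3$.

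For (2) I would work on the cocharacter lattice $\widehat{T}^0=X_*(T^{(k',l)})$. From the $W_0$-action described in~\S\ref{sub_diedral} one has $X^*(T^{(k',l)})\cong J_{l/k}\otimes_\ZZ\ZZ_{k'}$, where $J_{l/k}=\ZZ[\Hom(l,k_s)]/\ZZ$ is the quotient by the diagonal and $\ZZ_{k'}$ denotes the rank-one lattice on which $\Gamma_k$ acts through $\Gal(k'/k)$ by $\pm1$; dualizing gives $\widehat{T}^0\cong I_{l/k}\otimes_\ZZ\ZZ_{k'}$ with $I_{l/k}=\ker\bigl(\ZZ[\Hom(l,k_s)]\to\ZZ\bigr)$. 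The plan is then to tensor the augmentation sequence $0\to I_{l/k}\to\ZZ[\Hom(l,k_s)]\to\ZZ\to 0$ with $\ZZ_{k'}$ and take cohomology. By the projection formula $\ZZ[\Hom(l,k_s)]\otimes\ZZ_{k'}\cong\Ind_{\Gamma_l}^{\Gamma_k}\ZZ_{l'/l}$, so Shapiro gives $H^1\bigl(k,\ZZ[\Hom(l,k_s)]\otimes\ZZ_{k'}\bigr)\cong H^1(l,\ZZ_{l'/l})$, and since $k'$, $l$, hence also $l'$, are fields this group is $\ZZ/2\ZZ$. Because $(\ZZ_{k'})^{\Gamma_k}=0$, the long exact sequence identifies $H^1(k,\widehat{T}^0)$ with $\ker\bigl(\Cor_{l/k}\colon H^1(l,\ZZ_{l'/l})\to H^1(k,\ZZ_{k'})\bigr)$, the augmentation map inducing corestriction. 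Both groups being $\ZZ/2\ZZ$ and $\Cor_{l/k}\circ\Res_{l/k}=[l:k]=3$ acting as the identity on $\ZZ/2\ZZ$, the corestriction is an isomorphism and its kernel is trivial, giving $H^1(k,\widehat{T}^0)=0$. Here the delicate points are the computation $H^1(l,\ZZ_{l'/l})=\ZZ/2\ZZ$ and the vanishing of the kernel of corestriction, both again forced by $[l:k]$ being odd.
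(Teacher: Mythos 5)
Your proof of part (1) is correct and follows essentially the same route as the paper: the long exact cohomology sequence of $1 \to T^{(k',l)} \to R_{k'/k}\bigl(R^1_{l'/k'}(\GG_m)\bigr) \to R^1_{l/k}(\GG_m) \to 1$, with the outer terms identified via Shapiro and Hilbert 90, and the section of $n_{k'/k}$ obtained by composing the base-change map $k^\times/N_{l/k}(l^\times) \to (k')^\times/N_{l'/k'}\bigl((l')^\times\bigr)$ with the inverse of multiplication by $2$ on the $3$-torsion target --- this is precisely the paper's ``half of the diagonal map''.

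For part (2) you take a genuinely different, though closely parallel, decomposition. The paper resolves the cocharacter lattice by $0 \to \widehat{T}^0 \to \Coind_k^{k'}(I_{l'/k'}) \to I_{l/k} \to 0$, computes $H^1$ of the two right-hand terms to be $\ZZ/3\ZZ$ (via the augmentation sequence for $l/k$ and Shapiro), and concludes because the norm map between them is multiplication by $2$, hence injective on $3$-torsion. You instead write $\widehat{T}^0 \cong I_{l/k}\otimes \ZZ_{k'}$ --- which is consistent with the paper's sequence, since $\Coind_k^{k'}(I_{l'/k'}) \cong \ZZ[\Gamma_k/\Gamma_{k'}]\otimes I_{l/k}$ and the norm is the augmentation of the first factor --- then tensor the augmentation sequence for $l/k$ with the sign lattice $\ZZ_{k'}$ and reduce to showing that $\Cor_{l/k}\colon H^1(l,\ZZ_{l'/l}) \to H^1(k,\ZZ_{k'})$ is injective between two copies of $\ZZ/2\ZZ$, which follows from $\Cor\circ\Res = 3 = \id$ together with the equality of orders. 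The two arguments are dual in flavour: the paper peels off the quadratic datum first and works with $3$-torsion groups and multiplication by $2$, while you peel off the cubic datum first and work with $2$-torsion groups and multiplication by $3$; both hinge on $\gcd(2,3)=1$, and both use that $k'$, $l$, hence $l'$, are fields to kill the relevant $H^0$ terms (a point your write-up makes slightly more explicit than the paper's). Both proofs are correct.
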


\begin{proof} We put $T= T^{(k',l)}$.

\smallskip

\noindent (1)
The Hilbert 90 theorem produces an isomorphism  $k^\times/ N_{l/k}(l^\times)
\simlgr  H^1\bigl(k, R^1_{l/k}(\GG_{m,l}) \bigr)$.
Combined with the Shapiro isomorphism, we get an isomorphism
$(k')^\times/ N_{l'/k'}( {l'}^\times) \simlgr  H^1\bigl(k', R^1_{l'/k'}(\GG_{m,l'}) \bigr)
\simlgr H^1\bigl(k, R_{k'/k} \bigl( R^1_{l'/k'}(\GG_{m,l'}) \bigr)$.
By putting these two facts together,
the  long exact sequence of Galois cohomology reads
$$
\dots \to \ker\bigl( (l')^\times \to (k')^\times \bigr)\xrightarrow{ \enskip N_{l'/l} \enskip}
\ker\bigl( l^\times \to k^\times \bigr) \to
$$
\vskip-4mm
$$
H^1(k,T) \to (k')^\times/ N_{l'/k'}\bigl( (l')^\times \bigr)  \xrightarrow{ \enskip n_{k'/k}
\enskip}
k^\times/ N_{l/k}(l^\times) \, \to \, \dots
$$
Since $k^\times/ N_{l/k}(l^\times) $ is of $3$--torsion, half of the   ``diagonal map''
$k^\times/ N_{l/k}(l^\times)  \to  (k')^\times/ N_{l'/k'}\bigl( (l')^\times \bigr)$ provides
 a section of $(k')^\times/ N_{l'/k'}\bigl( (l')^\times \bigr)  \xrightarrow{ \enskip n_{k'/k} \enskip}
k^\times/ N_{l/k}(l^\times) $.

\smallskip

\noindent (2)
We have an exact sequence
$$
0 \, \to \,  \widehat T^0 \, \to \,  \Coind_k^{k'}( I_{l'/k'} ) \xrightarrow{ \enskip n_{k'/k} \enskip}
 I_{l/k} \, \to \, 0
$$
of Galois modules where $I_{l/k}= \Ker[ \Coind_k^l(\ZZ) \to \ZZ ]$.
It gives rise to the long exact sequence of groups
$$
0 \to H^0( k, \widehat T^0) \, \to \,  H^0\bigl( k,  \Coind_k^{k'}( I_{l'/k'} )\bigr)
\, \to \,  H^0(k,  I_{l/k}) \,  \to \, \dots
$$
\vskip-4mm
$$
H^1( k, \widehat T^0) \to H^1\bigl( k,  \Coind_k^{k'}( I_{l'/k'} )\bigr)
\to H^1(k,  I_{l/k}) \, \to  \, \dots
$$
We consider the exact sequence $0 \, \to \,  I_{l/k}  \, \to \,
\Coind_k^{l}(\ZZ) \,  \to \,  \ZZ \, \to \,  0$ and the corresponding sequence
$$
0 \,  \to \,  H^0(k,I_{l/k}) \, \to \, H^0\bigl(k,\Coind_k^{l}(\ZZ)\bigr) \,\to \,
H^0(k,\ZZ) \to \dots
$$
\vskip-4mm
$$
\to \, H^1(k,I_{l/k}) \, \to \, H^1\bigl(k,\Coind_k^{l}(\ZZ)\bigr) \, \to \, H^1(k,\ZZ).
$$
The group  $\ZZ=H^0\bigl(k,\Coind_k^{l}(\ZZ)\bigr)$ embeds in $\ZZ$ by multiplication by $3$;
also we have  $H^1(k, \Coind_k^l(\ZZ)) \simlgr H^1(l,\ZZ)=0$
by Shapiro's isomorphism. The above sequence induces an isomorphism
$\ZZ / 3 \ZZ \simlgr H^1\bigl(k,I_{l/k} \bigr)$.
On the other hand, we have
$H^1\Bigl(k, \Ind_k^{k'}( I_{l'/k'} )\Bigr)
\, \simlgr \, H^1(k',  I_{l'/k'} ) \simla \ZZ/3 \ZZ$.
The norm map  $n_{k'/k}:  H^1\bigl( \Coind_k^{k'}( I_{l'/k'}) \bigr) \to H^1(k, I_{l/k})$
is the multiplication by $2$ on $\ZZ/3\ZZ$ hence is injective.
By using the starting exact sequence, we conclude that
$H^1\bigl(k,  \widehat{T}^0 \bigr) =0$ as desired.
\end{proof}

\subsection{A  necessary condition}
There is a  basic restriction on the types of maximal tori of $G$.

\begin{proposition}\label{prop_basic}
(1) Let  $T$ be a $k$--torus of rank two and let $i:T \to G$ be a $k$--embedding
such that $\type(T,i)= [(k',l)]$. Then $G\times_k k'$ is split.

\medskip

\noindent (2) Assume that $l= k \times k \times k$. Then the
following are equivalent:

\smallskip

(i) There exists a $k$--embedding $i : T \to G$ of a rank two torus $T$
such that $\type(T,i)= [(k',k^3)]$;

\smallskip

(ii) $G_{k'}$ splits;

  \smallskip

(iii) There is an isometry $(k', n_{k'/k}) \hookrightarrow (C,N_C)$.

\end{proposition}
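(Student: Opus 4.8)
The plan is to prove the two parts by relating the cohomological \emph{type} of an embedding to splitting properties of the norm form $N_C$, using the structure results already established. For part (1), I would start from the observation that $\type(T,i)=[(k',l)]$ means the twisted root datum $\uPsi(G,i(T))$ is obtained from $\Psi_0$ by twisting via the $W_0$-torsor $[(k',l)]$. The key point is that the quadratic component $k'$ records exactly the action of the center $\langle c\rangle=C_2$ of $W_0$, which acts by $-\id$ on the root lattice. Over $k'$ this central factor is killed, so the root datum $\uPsi(G,i(T))\otimes_k k'$ becomes \emph{oriented-split} in the sense that its outer action (and in fact the relevant part of its twisting) trivializes. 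First I would make precise that passing to $k'$ splits off the $C_2$-factor, so that $G_{k'}$ acquires a maximal torus whose type lies in the image of $H^1(k',S_3)$ only; then I would invoke that $G$ is of type $G_2$ and has no outer automorphisms, together with the fact that a group of type $G_2$ is split as soon as it contains a maximal torus of a suitably ``inner'' type —more concretely, I would argue that $N_{C_{k'}}$ becomes isotropic. The cleanest route is to exhibit, from the embedded torus $T_{k'}$, a rank-one subtorus forcing $N_C$ to represent a nontrivial norm relation over $k'$ that makes it hyperbolic.

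For the mechanism behind splitting I would lean on the subgroup-of-type-$A_1\times A_1$ analysis (Lemma~\ref{lem_maximal}) and on Remark~\ref{rem_center}.(a): the center $c$ of $W_0$ is $s_\alpha\circ s_\beta$ for a short root $\alpha$ and an orthogonal long root $\beta$. The quadratic algebra $k'$ measures the Galois twist of $c$, hence governs whether the ``long'' $\SL_2$-factor (whose fixed algebra is a quaternion subalgebra $Q$ by the proof of Lemma~\ref{lem_maximal}) is defined over $k$ or only over $k'$. Over $k'$ the torus becomes split enough that $Q_{k'}$ must split, and by the Cayley--Dickson description $N_{C_{k'}}\cong n_{Q_{k'}}\otimes\langle 1,-c\rangle$, whence $N_{C_{k'}}$ is hyperbolic and $G_{k'}$ is split. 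This gives part (1).

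For part (2), with $l=k\times k\times k$, I would prove the cycle of implications $(i)\Rightarrow(ii)\Rightarrow(iii)\Rightarrow(i)$. The implication $(i)\Rightarrow(ii)$ is immediate from part (1) applied to this $l$. For $(ii)\Rightarrow(iii)$, I would apply Lemma~\ref{lem_quadratic}: since $G_{k'}$ split means precisely $C\otimes_k k'$ split, the equivalence $(i)\Leftrightarrow(ii)$ of that lemma yields an isometry $(k',n_{k'/k})\hookrightarrow(C,N_C)$, which is exactly condition (iii). For $(iii)\Rightarrow(i)$, I would run the construction in reverse: an isometry $(k',n_{k'/k})\hookrightarrow(C,N_C)$ gives, via Lemma~\ref{lem_quadratic}, an embedding of $k'$ as a unital composition subalgebra, and since $l=k^3$ corresponds to the split cubic part, the type $[(k',k^3)]$ is realized by embedding a maximal torus inside the subgroup of type $A_1\times A_1$ produced from the Cayley--Dickson doubling of $k'$. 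Here I would use Lemma~\ref{lem_shape}.(2), which explicitly identifies the torus of type $[(k',l)]$ with $\bigl(R^1_{k_1/k}(\GG_m)\times_k R^1_{k_2/k}(\GG_m)\bigr)/\mu_2$ when $l$ has a split factor, and confirms such a torus embeds in $G$ precisely when the required quaternion/quadratic data sits inside $C$.

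The main obstacle I anticipate is making part (1) genuinely rigorous rather than heuristic: translating ``the central factor $c$ trivializes over $k'$'' into the concrete statement that $N_{C_{k'}}$ is isotropic. The delicate step is controlling how the Galois descent of the short/long subgroups $H_{<},H_{>}$ (as in Lemma~\ref{lem_maximal}) interacts with base change to $k'$, and verifying that the quaternion algebra $Q=(C)^{H_{>}}$ necessarily splits over $k'$. I would handle this by tracing the cocycle: the type $[(k',l)]$ projects to $[k']\in H^1(k,C_2)$ under $W_0\to C_2$, and $C_2$ acts by $-\id$, so over $k'$ the embedding's type lands in $H^1(k',S_3)$; an $S_3$-type torus is anisotropic-kernel-free in a way that forces the ambient $G_{k'}$ to be split, because type-$G_2$ groups are split iff they contain a maximal torus whose type has trivial $C_2$-component. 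I expect the safest formal argument is via the norm form directly: exhibit from $i(T)_{k'}$ a $k'$-rational element of $C_{k'}$ on which $N_C$ vanishes, forcing isotropy.
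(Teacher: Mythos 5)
Your part (2) is essentially sound: $(i)\Rightarrow(ii)$ follows from part (1), $(ii)\Leftrightarrow(iii)$ is Lemma~\ref{lem_quadratic}, and your route for $(iii)\Rightarrow(i)$ --- embedding $k'$ into $C$, doubling to a quaternion subalgebra $Q$ and placing the torus inside $\bigl(\SL_1(Q)\times\SL_1(Q)\bigr)/\mu_2$ --- is exactly the alternative proof the paper itself points to (Proposition~\ref{prop_biquad} in the case $k_1=k_2=k'$), whereas the paper's own argument for $(ii)\Rightarrow(i)$ uses the Zariski-density/opposite-Borel trick to produce a maximal torus on which the conjugation of $k'/k$ acts by $-1$ on the roots.

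The genuine gap is in part (1), precisely in the case you flag as your ``main obstacle'': when $l$ is a cubic \emph{field}, every mechanism you propose breaks down. First, a direct computation gives $T^{(k',l)}\times_k k'\cong R^1_{l'/k'}(\GG_{m,l'})$ with $l'=l\otimes_k k'$ still a cubic field, and this torus is \emph{anisotropic}; so there is no split rank-one subtorus and no isotropic vector of $N_{C_{k'}}$ to be extracted from $i(T)_{k'}$ (note that a split group of type $G_2$ does contain anisotropic maximal tori, so isotropy of the torus is not the right invariant here). Second, the $A_1\times A_1$ mechanism via Lemma~\ref{lem_maximal} is unavailable: the stabilizer in $W_0$ of an $A_1\times A_1$ subsystem has $S_3$-component of order $2$, so a Galois-stable such subsystem exists only when $l$ has a split factor; for $l$ a cubic field the three subsystems are permuted by a $3$-cycle and there is no $k$- (or $k'$-) rational subgroup $H_>$, hence no quaternion subalgebra $Q=(C)^{H_>}$ whose splitting you could track. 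Third, the criterion ``a group of type $G_2$ is split as soon as it contains a maximal torus whose type has trivial $C_2$-component'' is exactly part (1) specialized to $k'=k\times k$, so invoking it is circular. The missing idea is the paper's two-step descent: base change to $l$ itself, where $l\otimes_k l$ acquires a split factor so that the already-settled case shows $G_{l\otimes_k k'}$ is split; then descend along the odd-degree extension $l\otimes_k k'/k'$ using Springer's theorem (Lemma~\ref{lem_coprime}) to conclude that $C_{k'}$, hence $G_{k'}$, is split.
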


\begin{proof} (1) Since $G$ is of type $G_2$,
 it is equivalent to show that $G\times_k k'$ is isotropic.

We may assume then $T= T^{(k',l)}$. We consider first the case when $l=k \times l_2$ where $l_2$ is a quadratic \'etale
$k$-algebra. Then we have $T \times_{k} k' \simlgr R^1_{l'/k'}\bigl(\GG_{m,l'}\bigr)
\simla R_{l_2 \otimes k'/k'}(\GG_{m, l_2 \otimes k'})$
hence  $T \times_{k} k'$ is isotropic.

It remains to consider the case when the cubic $k$--algebra $l$ is a field.
From the first case, we see that $G_{l'}$ is split.
In other words the $k'$--group $G_{k'}$ is split by the cubic field algebra $l=l\otimes_k k'$ of $k'$.
Hence  $C_{k'}$ is split hence $C$ splits.

\smallskip

\noindent (2) $(i) \Longrightarrow (ii)$ follows from (1).

\smallskip

\noindent $(ii) \Longrightarrow (i)$.
If $G$ is split, (i) holds according to Theorem \ref{GKR}.
We may assume then $G$ is not split, hence anisotropic.
In particular, $k$ is is infinite.
Since $G_{k'}$ splits,  $k'$ is a field and we denote by $\sigma: k' \to k'$ the conjugacy automorphism.
We use now some classical trick. Since $G(k')$ is Zariski dense in the Weil restriction $R_{k'/k}(G_{k'})$, there
 exists a Borel $k$--subgroup $B$ of $R_{k'/k}(G_{k'})$ such that its conjugate $\sigma(B)$ is opposite to
$B$. The $k$--group $T= B \cap \sigma(B) \cap G$ of $G$ is then a rank two torus.
If we write  $B= R_{k'/k}(B')$ with $B'$ a Borel $k'$-subgroup of $G_{k'}$, then
$T_{k'}$ is a maximal torus of $B'$.
 We denote by $i: T \to G$.
By seeing $i(T_{k'})$ as a maximal $k'$--torus of $B'$, it follows that
the action of $\sigma$ on the root system $\Psi(G_{k'}, T')$ is by $-1$.
Thus $\type(T,i)= (k',k^3)$ as desired.

For the equivalence $(ii) \Longleftrightarrow (iii)$,  see Lemma \ref{lem_quadratic}.
\end{proof}

\begin{remark}   {\rm Another proof of (2) is provided by the next Proposition \ref{prop_biquad}, it is the case
$k_1=k_2$.
}
\end{remark}

\subsection{The biquadratic case}\label{subsec_biquad}

In the dihedral group $D_{6} \subset \GL_2(\ZZ)$, it is convenient to change  coordinates
by considering the diagonal subgroup $(\ZZ/2 \ZZ)^2= \langle c_1 ,\  c_2 \rangle$.
The map $(\ZZ/2 \ZZ)^2 \simlgr C_2 \times (1 \times \ZZ/2\ZZ) \subset C_2 \times S_3$
is given by $(c_1,c_2) \mapsto (c_1, c_1c_2)$.

We are interested in the case when the class of $(k',l)$ belongs to the image
of $H^1( k, \ZZ/ 2\ZZ) \times H^1( k, \ZZ/ 2\ZZ) \to H^1(k, \ZZ/2\ZZ) \times H^1(k,S_3)$.
In terms of \'etale algebras, it rephrases by saying that there are quadratic \'etale $k$-algebras
$k_1/k$, $k_2/k$ such that $k'=k_2$ and $l= k \times l_2$ where $[k_2]= [k_1] + [l_2]$.
We call that case the biquadratic case. In that case, $T^{(k',l)}$ is $k$--isomorphic
to $\Bigl(R^1_{k_1/k}(\GG_m) \times R^1_{k_2/k}(\GG_m)\Bigr)/\mu_2$.

\begin{proposition}\label{prop_biquad} Let $k_1, k_2$ be quadratic \'etale $k$-algebras and denote by
 $\chi_1, \chi_2 \in H^1(k, \ZZ/2\ZZ)$ their class.
We consider  the couple $(k',l)=(k_2,k \times l_2)$
where $[l_2]= [k_1] + [k_2]$. We denote by $\uPsi= \uPsi_{(k',l)}$ defined in Section~\ref{sub_tori}
and by $X=\mathcal{E}(G,\uPsi)$  the $K$-variety of embeddings defined in Section~\ref{subsec_var}.

\smallskip

\noindent (a) The following are equivalent:

\begin{enumerate}
 \item $X(k) \not = \emptyset$, that is $G$ admits a maximal $k$--torus of type $[(k',l)]$;

 \item $C \otimes_k k_j$ is split for $j=1$ and $2$;

 \item  $C$ admits a quaternion subalgebra $Q$  such that there exists $c \in k^\times$ satisfying
 $$[Q] = \chi_1 \cup (c)= \chi_2 \cup (c) \in {_2\!\Br(k)}. $$

\end{enumerate}

\noindent (b) If the $k$--variety $X$ has a zero-cycle of odd degree  then
it has a $k$--point.
\end{proposition}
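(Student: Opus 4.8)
The plan is to prove part (a) as a cycle of implications and then derive part (b) from it. For part (a), the equivalence $(2) \Longleftrightarrow (3)$ is essentially the characterization of split-ness via the norm form. By Lemma~\ref{lem_quadratic}, $C \otimes_k k_j$ splits if and only if the quadratic norm $(k_j, n_{k_j/k})$ embeds isometrically into $(C, N_C)$, which (via the Cayley--Dickson machinery and the linkage property of Pfister forms) corresponds to the existence of a quaternion subalgebra $Q$ of $C$ whose norm form splits over $k_j$. Writing $\chi_j = (d_j)$ for the discriminant class of $k_j$, splitting of $Q_{k_j}$ amounts to $[Q] = \chi_j \cup (c)$ for a suitable $c$; matching this for both $j=1$ and $j=2$ simultaneously is exactly condition $(3)$. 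I would spell this out using Corollary~\ref{cor_SV} to pass between ``$C \cong C(Q,c)$'' and the isometric embedding $(Q,N_Q) \hookrightarrow (C,N_C)$.

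For $(1) \Longrightarrow (2)$: if $i : T \to G$ is an embedding of type $[(k',l)]$, then by Proposition~\ref{prop_basic}.(1) the group $G_{k'} = G_{k_2}$ is split, so $C \otimes_k k_2$ is split. The symmetry in $k_1, k_2$ is the crucial point: since $[l_2] = [k_1] + [k_2]$, the triple $(k_2, k \times l_2)$ and $(k_1, k \times l_2)$ give the \emph{same} torus $T \cong \bigl(R^1_{k_1/k}(\GG_m) \times_k R^1_{k_2/k}(\GG_m)\bigr)/\mu_2$ by Lemma~\ref{lem_shape}.(2) and Remark~\ref{rootdata_tori} (this is precisely the ``swap'' $k_{1,\sharp} = k_2$, $k_{2,\sharp} = k_1$). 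Thus the very same torus also realizes type $[(k_1, k \times l_2)]$, and applying Proposition~\ref{prop_basic}.(1) again forces $G_{k_1}$ to split as well.

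For $(2) \Longrightarrow (1)$, I would use the biquadratic structure to build the embedding explicitly via the subgroup $H(Q)$ of type $A_1 \times A_1$. Given condition $(3)$, choose the quaternion subalgebra $Q \subseteq C$ with $[Q] = \chi_1 \cup (c) = \chi_2 \cup (c)$, so that $C \cong C(Q,c)$ by Corollary~\ref{cor_SV} and we have the embedding $j : H(Q) = \bigl(\SL_1(Q) \times_k \SL_1(Q)\bigr)/\mu_2 \hookrightarrow G$ of Lemma~\ref{lem_maximal}. The condition $[Q] = \chi_i \cup (c)$ guarantees that $R^1_{k_i/k}(\GG_m)$ embeds as a maximal torus into the appropriate $\SL_1(Q)$-factor; assembling these via $H(Q)$ produces a rank-two torus $T \cong \bigl(R^1_{k_1/k}(\GG_m) \times_k R^1_{k_2/k}(\GG_m)\bigr)/\mu_2$ inside $G$, and a computation of the Galois action on the root subsystem $\Phi(H,T)$ of type $A_1 \times A_1$ (using Remark~\ref{rem_center}.(a), that $c = s_\alpha s_\beta$ for orthogonal long/short roots) shows its type is exactly $[(k_2, k \times l_2)] = [(k',l)]$. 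The main obstacle here is verifying that the orientation/type bookkeeping comes out as claimed rather than as the swapped pair; but since both pairs yield the same torus, the ambiguity is harmless.

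For part (b), suppose $X$ has a zero-cycle of odd degree, i.e. a family of closed points whose degrees have odd g.c.d. Each such point of degree $[k_\lambda : k]$ gives a $k_\lambda$-embedding, hence by part (a) applied over $k_\lambda$ forces $C \otimes_k k_\lambda \otimes_{k_\lambda} (k_j)_{k_\lambda}$ to be split, i.e. $C_{k_\lambda \cdot k_j}$ is split for $j = 1, 2$. I would then invoke Lemma~\ref{lem_coprime}: since the relevant extension degrees have odd g.c.d., splitting over the $k_\lambda$ descends to show $C \otimes_k k_j$ is split over $k$ itself for $j = 1, 2$, which is condition $(2)$. By the already-proven equivalence in part (a), $X(k) \neq \emptyset$. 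The delicate point is checking that the odd-degree hypothesis on the zero-cycle of $X$ transfers correctly to an odd-g.c.d. statement about the splitting fields of $C_{k_j}$, so that Lemma~\ref{lem_coprime} applies; I would treat each $j$ separately, using that a point of $X$ over a field $F$ yields in particular an embedding witnessing the splitting of $C_F \otimes_F (k_j)_F$.
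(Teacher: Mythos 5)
Your overall architecture (prove $(2)\Leftrightarrow(3)$, then close the cycle, then deduce (b) from (a) via Lemma~\ref{lem_coprime}) matches the paper's, and your treatment of $(3)\Rightarrow(1)$ and of part (b) is essentially the paper's argument. But there are two genuine problems. First, your $(1)\Rightarrow(2)$ is wrong as stated. A given embedding $i:T\to G$ has a single well-defined type, namely the class of the twisted root datum $\uPsi(G,i(T))$ in $H^1(k,W_0)=H^1(k,\ZZ/2\ZZ)\times H^1(k,S_3)$; here that class is $([k_2],[k\times l_2])$, and it is \emph{not} equal to $([k_1],[k\times l_2])$ unless $[k_1]=[k_2]$. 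The fact that $T^{(k_2,k\times l_2)}$ and $T^{(k_1,k\times l_2)}$ are isomorphic as abstract $k$--tori does not mean ``the very same torus also realizes type $[(k_1,k\times l_2)]$'' --- Remark~\ref{rootdata_tori} is precisely the warning that the abstract torus does not determine the type, and the root data for the two pairs are non-isomorphic. So you cannot apply Proposition~\ref{prop_basic}.(1) with $k'=k_1$. The correct (and simpler) argument, which is the paper's, bypasses the type entirely: by Lemma~\ref{lem_shape}.(2), $T\cong\bigl(R^1_{k_1/k}(\GG_m)\times_k R^1_{k_2/k}(\GG_m)\bigr)/\mu_2$, which is visibly isotropic over each $k_j$ (the $j$-th factor becomes $\GG_m$), hence $G_{k_j}$ is isotropic, hence split for a group of type $G_2$, hence $C_{k_j}$ splits.

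Second, your $(2)\Rightarrow(3)$ glosses over the only nontrivial point. From $C_{k_j}$ split you get, for each $j$ separately, an embedding of $k_j$ into $C$ as a unital composition subalgebra (Lemma~\ref{lem_quadratic}) and hence some quaternion subalgebra $Q_j$ split by $k_j$; but condition $(3)$ demands a \emph{single} $Q$ with $[Q]=\chi_1\cup(c)=\chi_2\cup(c)$ for a \emph{common} $c$. ``Matching this for both $j=1$ and $2$ simultaneously'' is exactly what has to be proved. The paper does it in two steps: it builds one quaternion subalgebra containing both $k_1$ and $k_2$, namely $Q=k_1\oplus k_1x$ for $x\in k_2\setminus k_1$ (Lemma~\ref{lem_doubling}, with the case $k_1=k_2$ handled by the ordinary doubling process), and then invokes the common slot lemma (equivalently, the comparison of the Pfister form $N_Q$ containing both $n_{k_1/k}$ and $n_{k_2/k}$) to produce the common $c$. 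Neither ingredient appears in your sketch, and without them the step does not go through. The remaining pieces --- $(3)\Rightarrow(1)$ via $C\cong C(Q,c)$ and the subgroup $\bigl(\SL_1(Q)\times\SL_1(Q)\bigr)/\mu_2$ with the type computation from Lemma~\ref{lem_shape}.(2), and part (b) via an odd-degree member of the zero-cycle and Springer's theorem over $k_1$ and $k_2$ --- are fine and agree with the paper (just remember to treat the case $k_1=k\times k$ separately in (b), where $C$ splits outright).
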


\begin{proof}
$a)$ If  $C$ is split, the statement is trivial since the three assertions hold.
We can then assume  than $C$ is non split.
We choose scalars $a_1,a_2 \in k$ such that $k_j \cong  k[t]/t^2 -a_j$ for $j=1$ and $2$ if
$k$ is of  odd characteristic and $k_j \cong  k[t]/t^2 +t +a_j$ in the characteristic two case.

\smallskip

\noindent $(1) \Longrightarrow (2)$:
We assume that $T=T^{k',l}\cong \Bigl( R^1_{k_1/k}(\GG_m) \times R^1_{k_2/k}(\GG_m)\Bigr)/ \mu_2 $ embeds then in $G$.
Then $T_{k_j}$ is isotropic so that $G_{k_j}$ is isotropic, hence split  for $j=1$ and $2$.
We conclude that $C_{k_j}$ is split for $j=1$ and $2$.

\smallskip

\noindent $(2) \Longrightarrow (3)$:
We shall construct a quaternion subalgebra  $Q$ of $C$
which contains $k_1$ and $k_2$.
Since $C_{k_j}$ splits for $j=1$ and $2$,
we know that $k_j$ embeds in $C$ as unital composition subalgebra (Lemma \ref{lem_quadratic}).
If $k_1=k_2$ then $Q$ can be obtained from $k_1$
by the doubling process from \cite[Proposition 1.2.3]{SV}.
So we can assume than $k_1\not= k_2$.
Let $x \in k_2 \setminus k_1$. Then Lemma \ref{lem_doubling}
shows that $Q=k_1\oplus k_1 x$ is a unital composition subalgebra
of $C$. It is of dimension $4$ so is a quaternion subalgebra
which contains $k_1$ and $k_2$.
 The common slot lemma yields that there exists $c \in k^\times$ such that
$[Q]=\chi_1 \cup (c)= \chi_2 \cup (c) \in \Br(k)$.
In odd characteristic, a reference for the common slot lemma  is \cite[4.13]{La}.
In characteristic free, it is a consequence of a fact on Pfister forms pointed out by Garibaldi-Petersson \cite[prop. 3.12]{GP}.
The $1$-Pfister quadratic forms  $n_{k_1/k}$ and $n_{k_2/k}$ are subforms of the Pfister quadratic form $N_Q$,
so there exists a bilinear quadratic Pfister form $h= \langle 1, c\rangle$
such that $N_Q \cong  h \otimes n_{k_1/k}  \cong N_Q= h \otimes n_{k_2/k}$.
Thus $[Q]=\chi_1 \cup (c)= \chi_2 \cup (c) \in \Br(k)$ according to the characterization of
quaternion algebras by their norm forms.

\smallskip

\noindent $(3) \Longrightarrow (1)$: We have that $C\cong C(Q,c)$, so we get an embedding \break $\Bigl(\SL_1(Q) \times \SL_1(Q)\Bigr)/ \mu_2
\to \Aut(C(Q,c)) \simlgr G$. By embedding $k_1$ in $Q$ (resp. \ccskip $k_2$ in $Q$), we get
an embedding $ R^1_{k_1/k}(\GG_m) \times  R^1_{k_2/k}(\GG_m) \to \SL_1(Q) \times \SL_1(Q)$ so that
an embedding $i: \Bigl( R^1_{k_1/k}(\GG_m) \times  R^1_{k_2/k}(\GG_m) \Bigr)/ \mu_2 \to \Bigr(  \SL_1(Q) \times \SL_1(Q) \Bigl)/\mu_2
\to G$. By the computations of the proof of Lemma \ref{lem_shape}.(2), it has indeed type $[(k',l)]$.

\smallskip

\noindent (b)
We assume that  $X$ has a $0$--cycle of odd degree, i.e. \cskip there are
finite field extensions  $K_1,\dots, K_r$ of $k$  such that $X(K_i)\not = \emptyset$ for $i=1,\dots,r$
and ${g.c.d.([K_1:K],\dots [K_r:K])}$ is odd.
By (a), it follows that $C_{K_i \otimes_k k_1}$ and $C_{K_i \otimes_k k_2}$ is split for
$i=1,...,r$.
Then there exists an index $i$ such that  $[K_i:k]$ is  odd.
If $k_1=k \times k$, then $C$ splits over $K_i$; it follows  that is $C$  split by Lemma \ref{lem_coprime}
whence $X(k) \not=\emptyset$ by Theorem \ref{GKR}.
We can then assume that $k_1$ is a field.
Then $K_i \otimes_k k_1$ is a field extension of $K_j$ so that $C_{K_j \otimes_k k_1}$
splits; since   $[K_i \otimes_k k_1: k_1]$ is odd; Lemma \ref{lem_coprime} shows then that
$C_{k_1}$ is split.
Similarly $C_{k_2}$ is split and  by (a), we conclude that $X(k) \not= \emptyset$.
\end{proof}

\medskip

In the following, we consider a special case where $k'$ and $l$ have the same discriminant.

\begin{corollary}\label{cor_disc} Let $k'/k$ be a quadratic \'etale algebra and let $l$ be a cubic \'etale $k$--algebra
 of discriminant $k'$. If $C$ admits a maximal $k$--torus of type $[(k',l)]$, then
$C$ splits.
\end{corollary}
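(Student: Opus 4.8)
The plan is to combine the necessary condition of Proposition~\ref{prop_basic} with the biquadratic criterion of Proposition~\ref{prop_biquad}, reducing in every case to a situation where the cubic algebra has a split factor. First I would treat the case where $k'$ is not a field: if $k' \cong k \times k$, then Proposition~\ref{prop_basic}.(1) gives that $G \times_k k'$ is split, and since $G \times_k k' \cong G \times_k G$ this forces $G$, hence $C$, to be split. So I assume from now on that $k'$ is a field. As $\mathrm{disc}(l) = k'$ is then a nontrivial quadratic \'etale algebra, $l$ is neither $k \times k \times k$ nor a cyclic cubic field (both of which have trivial discriminant); thus either $l \cong k \times k'$, or $l$ is a non-Galois cubic field.

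In the first case $l \cong k \times k'$ we are exactly in the biquadratic situation of \S\ref{subsec_biquad}, with $(k',l) = (k_2, k \times l_2)$ and $k_2 = l_2 = k'$. The defining relation $[l_2] = [k_1] + [k_2]$ then forces $[k_1] = 0$, i.e. $k_1 \cong k \times k$. Since the hypothesis gives $X(k) \neq \emptyset$, condition (1) of Proposition~\ref{prop_biquad} holds, so by its equivalence with (2) the algebra $C \otimes_k k_1$ is split; as $k_1$ is split, $C$ splits.

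In the remaining case $l$ is a cubic field, and I would base change to $l$ in order to manufacture a split factor in the cubic part. Over $l$ one has $l \otimes_k l \cong l \times l''$ with $l''$ quadratic \'etale over $l$, and since the discriminant commutes with base change, $[l''] = [\mathrm{disc}(l) \otimes_k l] = [k' \otimes_k l]$, so $l'' \cong k' \otimes_k l$. Base changing the embedding $i \colon T \to G$ to $l$, the group $G_l$ then admits a maximal $l$-torus of type $[(k' \otimes_k l,\, l \times l'')]$, which is again the biquadratic datum of Proposition~\ref{prop_biquad}, now with the role of $k_1$ played by a split algebra precisely because $l'' \cong k' \otimes_k l$ serves as both $k_2$ and $l_2$ over $l$. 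Applying that proposition over $l$ gives that $C_l$ is split, and since $[l : k] = 3$ is odd, Lemma~\ref{lem_coprime} (Springer's odd extension theorem) shows that $C$ itself splits.

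The main obstacle is precisely this last case: the crux is the identification $l \otimes_k l \cong l \times (k' \otimes_k l)$, which shows that over $l$ the discriminant hypothesis degenerates the biquadratic datum into one with $k_1$ split, so that Proposition~\ref{prop_biquad} forces $C_l$ to split; the descent back to $k$ is then immediate from the oddness of $[l:k]$.
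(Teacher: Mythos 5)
Your proof is correct and follows essentially the same route as the paper: in the split case $l\cong k\times k'$ you apply Proposition~\ref{prop_biquad} with $[k_1]=2[k']=0$, and in the cubic field case you base change to $l$, use $l\otimes_k l\cong l\times(k'\otimes_k l)$ to land back in the first case, and descend via Lemma~\ref{lem_coprime}. The only cosmetic difference is that you split off the case $k'\cong k\times k$ at the start via Proposition~\ref{prop_basic}, whereas the paper's case division (on whether $l$ is a field) absorbs it; both work.
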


\begin{proof} Assume firstly that $l$ is not a field, so that $l \cong  k \times k'$.
Then Proposition \ref{prop_biquad} yields that $C$ is split by the quadratic \'etale $k$--algebra $k_1$
 which satisfies $[k_1]=[k']+[l_2]=0$, whence $C$ is split.

If $l$ is a field, the octonion $l$--algebra $C_l$ admits a maximal $l$-torus of type \break
$[(k' \otimes_k l, l \otimes_k l)]$. Since $l \otimes_k l \simlgr l \times (l \otimes_k k')$, the
first case shows that $C_l$ is split.
We conclude that $C$ is split by Lemma \ref{lem_coprime}.
\end{proof}

\begin{remark}\label{rem_real}{\rm
Take $k = \R$ and let $C$  be  the ``anisotropic'' Cayley algebra (or simply Cayley algebra).
We consider the case where $(k',l)= (\CC, \R \times \C)$. By Corollary \ref{cor_disc}, there is no $\R$--embedding of a maximal torus
of type $(k',l)$. However, $G_{k'}$ splits and this example shows that
only the direct implication holds in Proposition \ref{prop_basic}.(1).
The only possible type is then $[(\CC, \R^3)]$ which is realized according to
Proposition \ref{prop_basic}.(2).
}
\end{remark}

We can now provide a description of such maximal tori.

\begin{proposition}\label{prop_biquad2} Let $k_1, k_2$ be quadratic \'etale $k$-algebras.
We consider  the couple $(k',l)=(k_2,k \times l_2)$
where $[l_2]= [k_1] + [k_2]$ and we assume that $C$ is split by $k_1$ and $k_2$.
We put $T= \bigl( R^1_{k_1/k}(\GG_m) \times R^1_{k_2/k}(\GG_m) \bigr) / \mu_2$ and consider a $k$--embedding
$i: T \to G$ of type $[(k',l)]$. Then there exists a quaternion subalgebra $Q$ of $C$ containing $k_1$ and $k_2$
and a Cayley-Dickson decomposition $C \cong C(Q,c)$ such that $i: T \to G \cong \Aut(C(Q,c))$ factorizes by
the $k$--subgroup $\Bigl(\SL_1(Q) \times \SL_1(Q)\Bigr) / \mu_2$ of $\Aut(C(Q,c))$.
\end{proposition}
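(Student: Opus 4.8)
The plan is to reduce the statement to Lemma \ref{lem_maximal} by first exhibiting a $k$--subgroup $H$ of $G$ of type $A_1 \times A_1$ which contains $i(T)$. Once such an $H$ is produced, Lemma \ref{lem_maximal} furnishes a quaternion algebra $Q$, a scalar $c \in k^\times$, an isomorphism $C \cong C(Q,c)$ and an isomorphism $H \simlgr H(Q)= \bigl(\SL_1(Q) \times \SL_1(Q)\bigr)/\mu_2$ fitting into the commutative square of that lemma. Composing $i$ with $T \to i(T) \hookrightarrow H \simlgr H(Q) \hookrightarrow \Aut(C(Q,c)) = G$ then gives exactly the asserted factorization, so that it only remains to check afterwards that $k_1$ and $k_2$ embed into $Q$.

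To construct $H$ I would argue by Galois descent. Since $G$ has no outer automorphisms, the Galois action on $\Phi\bigl(G_{k_s}, i(T)_{k_s}\bigr)$ is given by a homomorphism $\mu \colon \Gamma_k \to W_0$ whose conjugacy class equals $\type(T,i)=[(k',l)]=[(k_2,\,k \times l_2)]$. We are in the biquadratic case, so this class lies in the image of $H^1(k, W^{(k',l_2)}) \to H^1(k,W_0)$, where (in the notation of the proof of Lemma \ref{lem_shape}.(2)) $W^{(k',l_2)}=\langle s_\alpha, s_\beta\rangle$ for a short root $\alpha$ and a long root $\beta$ orthogonal to it. Hence, after replacing $(\alpha,\beta)$ by a $W_0$--conjugate pair of roots (legitimate by Remark \ref{rem_center}.(b)), the image $\mu(\Gamma_k)$ is contained in $\langle s_\alpha, s_\beta\rangle$. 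As both $s_\alpha$ and $s_\beta$ preserve the set $\{\pm\alpha,\pm\beta\}$ and respect root lengths, the family of root subgroups $\{U_{\pm\alpha}, U_{\pm\beta}\}$ of $G_{k_s}$ relative to $i(T)_{k_s}$ is permuted by $\Gamma_k$, because $\sigma(U_\gamma)=U_{\mu(\sigma)\gamma}$. Therefore the $k_s$--subgroup $H_{k_s}=\langle\, i(T)_{k_s},\, U_{\pm\alpha},\, U_{\pm\beta}\,\rangle$ is stable under $\Gamma_k$; being a Galois--stable closed subgroup of $G_{k_s}$ containing the $k$--torus $i(T)$, it descends to a $k$--subgroup $H \subseteq G$ which is semisimple of type $A_1\times A_1$ and has $i(T)$ as a maximal $k$--torus.

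With $H$ in hand I would invoke Lemma \ref{lem_maximal} to obtain $Q$, $c$, $C\cong C(Q,c)$ and $H\simlgr H(Q)$, and then identify $k_1$ and $k_2$ inside $Q$. The descent of the previous step simultaneously produces the ``short'' and ``long'' $k$--subgroups $H_< , H_> \cong \SL_1(Q)$ attached to $\alpha$ and $\beta$, and the decomposition $i(T)=(S_< \times S_>)/\mu_2$ into their maximal tori corresponds, under $i$, to the decomposition $T=\bigl(R^1_{k_1/k}(\GG_m)\times R^1_{k_2/k}(\GG_m)\bigr)/\mu_2$ of Lemma \ref{lem_shape}.(2): indeed $i$ has the prescribed type and the Galois action preserves root lengths, so $i$ matches the short factor $R^1_{k_1/k}(\GG_m)$ with $H_<$ and the long factor $R^1_{k_2/k}(\GG_m)$ with $H_>$. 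Now $S_<\cong R^1_{k_1/k}(\GG_m)$ is a maximal torus of $\SL_1(Q)$, hence equals $R^1_{E/k}(\GG_m)$ for the quadratic \'etale subalgebra $E\subseteq Q$ centralizing it; since a norm-one torus determines its algebra up to isomorphism, $E\cong k_1$, and likewise $S_>\cong R^1_{k_2/k}(\GG_m)$ yields a copy of $k_2$ in $Q$. Thus $Q$ contains both $k_1$ and $k_2$, as required.

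The main obstacle is the descent step. One must be careful that the Galois action on $\uPsi(G,i(T))$ is represented by a homomorphism into the Weyl group $\langle s_\alpha, s_\beta\rangle$ of an \emph{honest} $A_1\times A_1$ subsystem of $\Phi(G_{k_s},i(T)_{k_s})$ (and not merely into a conjugate obtained by an uncontrolled modification), so that the resulting permutation of $U_{\pm\alpha}, U_{\pm\beta}$ is exactly what guarantees that $H_{k_s}$ is $\Gamma_k$--stable and descends to $k$ together with $i(T)$. The remaining bookkeeping --- applying Lemma \ref{lem_maximal} and matching the norm-one subtori of $i(T)$ with the subalgebras $k_1,k_2\subseteq Q$ via the short/long dictionary of Lemma \ref{lem_shape}.(2) --- is then routine.
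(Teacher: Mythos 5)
Your proof is correct and follows essentially the same route as the paper's: both descend the $A_1\times A_1$ subgroup generated by $i(T)$ and the root groups of a Galois--stable $A_1\times A_1$ subsystem (using that the biquadratic type forces the Galois action on the roots to factor through $\langle s_\alpha,s_\beta\rangle$), and then invoke Lemma \ref{lem_maximal}. The only difference is that you treat the degenerate cases uniformly and spell out, via the short/long dictionary of Lemma \ref{lem_shape}.(2), why $Q$ actually contains $k_1$ and $k_2$ --- a verification the paper leaves implicit.
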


\begin{proof}
\noindent{\it Case  $k_1 \otimes_k k_2$ is a field.}
We denote by $\Gamma= \ZZ/2\ZZ \times \ZZ/2\ZZ$ the  Galois group of the biquadratic field extension $k_1 \otimes_k k_2$ .
This group  acts on the root system $\Phi(G_{k_s}, i(T_{k_s}))$ through
a $W_0$--conjugate of the standard  subgroup $\ZZ/2\ZZ \times \ZZ/2\ZZ$ of $W_0$
generated by the central symmetry and the symmetry with the horizontal axis
(see figure in \S \ref{sub_diedral}).
It follows that  $\Gamma$ stabilizes  a subroot system $\Phi_1$ of type $A_1 \times A_1$ of
$\Phi(G_{k_s}, T_{k_s})$.
By Galois descent, the $k_s$-subgroup generated by the root subgroups of $\Phi_1$
descends to a $k$--subgroup $H$ of $G$ which is semisimple
of type $A_1 \times A_1$. Lemma \ref{lem_maximal} shows that
there is a Cayley-Dickson's decomposition $C=Q \oplus Q.a$ such that
$H= H(Q)$. We have then a factorization of $i: T \to G$ by $H(Q) \simlgr \bigl(\SL_1(Q) \times \SL_1(Q)\bigr)/ \mu_2$.

The other cases ($k_1$ or $k_2$ split, $k_1 = k_2$) are simpler of the same flavour
and left to the reader.
\end{proof}

\subsection{The cubic field case: a first example}

Beyond the previous ``equal discriminant case'', the embedding problem for a given octonion algebra $C$
and a couple $(k',l)$ whenever $l$ is a cubic field is much more complicated.
The property to carry  a maximal torus of ``cubic type''
encodes information on the relevant $k$--group and we shall investigate firstly  specific  examples  over Laurent series fields.
 The next fact is inspired by similar considerations on
central simple algebras by Chernousov/Rapinchuk/Rapinchuk \cite[\S 2]{CRR}.

Let us start with a more general setting.
Let $G_0$ be a semisimple Chevalley group defined over $\ZZ$ equipped with a maximal split subtorus $T_0$.
Denote by $\Psi_0$ the root datum attached to $(G_0,T_0)$.
Let $G'/k$ be a quasi-split form of $G_0$ and denote by $T'$
a maximal $k$--torus of $G'$ which is the centralizer of a maximal $k$--split torus of $G'$.
We denote by  $W'=N_{G'}(T')/T'$ the Weyl group of $T'$.


\begin{lemma}\label{lem_residue}
Let $K=k((t))$. Let $E$ be a $W'$-torsor defined over $k$ and put $T=E \wedge^{W'} T'$.
Assume that  $H^{1}(k,\widehat{T}^0)=0$, where $\widehat{T}^0$ is the Galois lattice of cocharacters of $T$.
Let $z : \Gal(K_s/K) \to G'(K_s)$ be a Galois cocycle and put $G={_zG'}/K$.
Assume there is  an embedding $i:T_K\to G$ satisfying  $\type_{can}(i,T_K)=[E]_K \in H^1(K, W')$.
Then $[z]$ is ``unramified'', i.e. \cskip $[z] \in \mathrm{Im}\bigl( H^1(k,G') \to H^1(K,G') \bigr)$.
In particular there exists
a semisimple $k$--group $H$ such that $G \cong H \times_k K$.
\end{lemma}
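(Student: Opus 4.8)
The plan is to reduce the assertion about the group cohomology class $[z]\in H^1(K,G')$ to a statement about the Galois cohomology of the torus $T$ over the Laurent series field $K=k((t))$, where the hypothesis $H^1(k,\widehat T^0)=0$ is exactly what is needed. First I would apply Theorem \ref{theo_steinberg} over the base field $K$ to the class $[z]$ and to the given maximal torus $i:T_K\to G={}_zG'$. This produces a $K$-embedding $j:T_K\to G'_K$ together with a class $[a]\in H^1(K,T_K)$ such that $j_*[a]=[z]$ and $\type_{id}(T_K,j)=\type_{can}(T_K,i)=[E]_K$.

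Next I would produce a reference embedding defined over $k$. By Theorem \ref{GKR}.(2) applied to the $W'$-torsor class $[E]\in H^1(k,W')$, there is a $k$-embedding $j_0:T\to G'$ with $\type_{id}(T,j_0)=[E]$, whose source torus is $E\wedge^{W'}T'=T$ (the torus being determined by its type, cf.\ Lemma \ref{lem_shape}.(1)). Its base change $j_{0,K}:T_K\to G'_K$ then satisfies $\type_{id}(T_K,j_{0,K})=[E]_K=\type_{id}(T_K,j)$. Since $j$ and $j_{0,K}$ are two embeddings of the same torus $T_K$ into $G'_K$ with equal oriented type, Proposition \ref{prop_image} gives $\mathrm{Im}(j_*)=\mathrm{Im}(j_{0,K,*})\subseteq H^1(K,G')$. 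Hence $[z]=j_*[a]$ lies in $\mathrm{Im}(j_{0,K,*})$, i.e.\ there is $[a']\in H^1(K,T_K)$ with $j_{0,K,*}[a']=[z]$.

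The crux is then the structure of $H^1(K,T_K)$ for the constant torus $T_K=T\times_k K$. Since $T$ splits over $k_s$, it splits over the maximal unramified extension $K^{\mathrm{ur}}=k_s((t))$, so its inertia cohomology vanishes by Hilbert 90; the Hochschild--Serre sequence for $1\to I\to\Gamma_K\to\Gamma_k\to 1$ then identifies $H^1(K,T_K)$ with $H^1\bigl(\Gamma_k,T(k_s((t)))\bigr)$. Writing $k_s((t))^\times=k_s^\times\oplus t^{\ZZ}\oplus U$ with $U=1+tk_s[[t]]$ gives, since $t$ is $\Gamma_k$-fixed, a $\Gamma_k$-decomposition $T(k_s((t)))\cong T(k_s)\oplus\widehat T^0\oplus (U\otimes\widehat T^0)$ in which the middle summand comes from the uniformizer $t$ and the last summand is cohomologically trivial. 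This yields the splitting
\[
H^1(K,T_K)\;\cong\;H^1(k,T)\,\oplus\,H^1(k,\widehat T^0),
\]
whose first summand is precisely the unramified part $\mathrm{Im}\bigl(H^1(k,T)\to H^1(K,T_K)\bigr)$ and whose second summand is the ramified part; this is the content of the computation of Appendix \ref{sec_tori}. By the hypothesis $H^1(k,\widehat T^0)=0$ the ramified summand vanishes, so every class of $H^1(K,T_K)$ is unramified, and in particular $[a']=\mathrm{res}_{K/k}[a_0]$ for some $[a_0]\in H^1(k,T)$.

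Finally I would conclude by functoriality. As $j_0$ is defined over $k$, pushforward commutes with base change, so
\[
[z]=j_{0,K,*}\bigl(\mathrm{res}_{K/k}[a_0]\bigr)=\mathrm{res}_{K/k}\bigl(j_{0,*}[a_0]\bigr),
\]
which exhibits $[z]$ as the image of $[z_0]:=j_{0,*}[a_0]\in H^1(k,G')$; thus $[z]$ is unramified. Setting $H:={}_{z_0}G'$, an inner, hence semisimple, $k$-form of $G'$, the equality $[z]=[(z_0)_K]$ yields $G={}_zG'_K\cong({}_{z_0}G')\times_k K=H\times_k K$, as required. I expect the main obstacle to be the cohomological computation of the third step: one must know precisely that the ramified part of $H^1(K,T_K)$ is governed by $H^1(k,\widehat T^0)$ (so that the stated hypothesis is exactly the right one), and some care is needed because the summand carrying $H^1(k,\widehat T^0)$ arises through inflation in the Hochschild--Serre sequence yet is genuinely ramified rather than unramified.
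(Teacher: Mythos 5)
Your proposal is correct and follows essentially the same route as the paper: Steinberg's theorem (Theorem \ref{theo_steinberg}) to lift $[z]$ through a torus, Kottwitz's theorem (Theorem \ref{GKR}) to produce a reference embedding $j_0$ over $k$, Proposition \ref{prop_image} to identify the images, and the split localization sequence $0 \to H^1(k,T) \to H^1(K,T) \to H^1(k,\widehat T^0) \to 0$ of Appendix \ref{sec_tori} together with the vanishing hypothesis to descend the class. Your in-line re-derivation of that sequence via the decomposition of $k_s((t))^\times$ is only a sketch of what Lemma \ref{lem_aniso} proves carefully (the acyclicity of the unit filtration is where the work is), but since you cite the appendix for it, the argument is complete.
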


\begin{proof}
By our form of Steinberg's Theorem \ref{theo_steinberg},
there is a $K$--embedding $i': T_K \to G'_{K}$ such
that the class $[z] \in H^1(K,G'_{K})$ belongs to the image of $i'_*: H^1(K,T) \to H^1(K,G'_{K})$
and furthermore $\type_{can}(T_K,i)= \type_{id}(T_K,i')=[E]_K \in H^1(K,W')$.

On the other hand, we know by Theorem \ref{GKR} that there exists a $k$--embedding $j: T \to G'$ such that $\type_{id}(T,j)=[E]$.
By Proposition \ref{prop_image} the images of $(i')_*$  and  $(j_K)_*: H^1(K,T) \to H^1(K,G')$ coincide.
It follows that  $[z] \in H^1(K,G') $ belongs to  the image of $(j_K)_*: H^1(K,T) \to H^1(K,G')$.
We appeal now to the localization  sequence $ 0 \to H^1(k,T) \to  H^1(K, T) \to H^1(k,\widehat T^0) \to 0$ provided by the appendix (Lemma
\ref{lem_aniso}).  Using our vanishing hypothesis $H^1(k,\widehat T^0)=0$ and the commutative diagram
\[
\xymatrix{
 H^1(k,T) \ar[r]  \ar[d]^{j_{*,k}}
 & H^1(K, T) \ar[d]^{j_{*,K}} \ar[r] & 0\\
 H^1(k,G') \ar[r] & H^1(K, G') &,
 }
\]
we conclude that $[z]$ comes from  $H^1(k,G')$.
\end{proof}

Since every semisimple $K$-group of type $G_2$ is an inner form of its splits form, the  following corollary follows readily.

\begin{corollary}\label{cor_residue}
$K=k((t))$ and let $G/K$ be a semisimple  $K$--group of type $G_2$.
Consider a couple $(k',l)$ such that $k'/k$ is a quadratic \'etale algebra and
$l/k$ is a cubic field separable extension. Denote by $E/k$ the  $W_0$-torsor
associated to $(k',l)$ and put $T/k=E \wedge^{W_0} T_0$.
If the $K$-torus $T \times_k K$ admits an embedding $i$ in $G$ such that  $\type_{can}(T_K,i)=[(k',l)]$, then  there exists
a semisimple $k$--group $H$ of type $G_2$ such that $G \cong H \times_k K$.
\end{corollary}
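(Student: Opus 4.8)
The plan is to realize the statement as a direct application of Lemma \ref{lem_residue}, taking $G'=G_0$ to be the split Chevalley group of type $G_2$ (defined over $\ZZ$, hence over $k$), $T'=T_0$ a maximal split torus, and $W'=W_0$. The key structural input, already recorded just before the statement, is that $G_2$ has no outer forms: the Dynkin diagram of $G_2$ carries no nontrivial automorphism, so the quasi-split form of $G$ is the split form $G_0$ and $G$ is an \emph{inner} twist $G\cong {_zG_0}$ for some cocycle $z\in Z^1(K,G_0)$. Letting $E$ be the $W_0$-torsor over $k$ attached to $(k',l)$ and $T=E\wedge^{W_0}T_0=T^{(k',l)}$, the given hypothesis $\type_{can}(T_K,i)=[(k',l)]=[E]_K$ is precisely the type condition demanded by Lemma \ref{lem_residue}.

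The only hypothesis of Lemma \ref{lem_residue} that is not immediate is the vanishing $H^1(k,\widehat{T}^0)=0$. When $k'$ is a field, both $k'$ and $l$ are fields ($l$ being a cubic field by assumption), so Lemma \ref{lem_seq}.(2) applies verbatim and yields $H^1\bigl(k,\widehat{(T^{(k',l)})}^0\bigr)=0$. Feeding this into Lemma \ref{lem_residue} shows that $[z]$ is unramified and produces a semisimple $k$-group $H$ with $G\cong H\times_k K$.

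It remains to dispose of the degenerate case $k'=k\times k$, which I would treat directly before invoking the lemma. Applying Proposition \ref{prop_basic}.(1) over the base field $K$ to the embedding $i$ (legitimate since for $G_2$ the type and the canonical oriented type coincide) shows that $G\times_K(k'\otimes_k K)$ is split; but $k'\otimes_k K\cong K\times K$, so this simply says that $G$ is split over $K$. Then $G\cong (G_0)_k\times_k K$ and one takes $H=(G_0)_k$. In either case the descended group $H$ is automatically of type $G_2$, since the type is a geometric invariant stable under base field extension and $H\times_k K\cong G$.

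I do not expect a genuine obstacle: all the real content is front-loaded into Lemma \ref{lem_residue} and Lemma \ref{lem_seq}.(2). The one point requiring care is checking every hypothesis of Lemma \ref{lem_residue}, in particular the vanishing $H^1(k,\widehat{T}^0)=0$; this is exactly where the assumption that $l$ is a field enters, and it forces the separate (but elementary) treatment of the split case $k'=k\times k$.
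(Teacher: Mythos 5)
Your proof is correct and follows essentially the same route as the paper: write $G$ as an inner twist ${}_zG_0$ (possible since $G_2$ has no outer forms), verify $H^1(k,\widehat{T}^0)=0$ via Lemma \ref{lem_seq}.(2), and apply Lemma \ref{lem_residue} with $G'=G_0$, $T'=T_0$. Your separate treatment of the case $k'=k\times k$ is in fact a genuine improvement: there $T^{(k',l)}\cong R^1_{l/k}(\GG_{m,l})$ and $H^1(k,\widehat{T}^0)\cong\ZZ/3\ZZ\neq 0$, so the hypothesis of Lemma \ref{lem_seq}.(2) fails and the paper's one-line proof silently skips this case, whereas your direct argument via Proposition \ref{prop_basic}.(1) (forcing $G$ split over $K$, hence constant) handles it cleanly.
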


\begin{proof} We can assume that $G={_z(G_0)}/K$ where $z:\Gal(K_s/K) \to G(K_s)$ is a Galois cocycle.
By Lemma \ref{lem_seq} (2), we have $H^1(k,\widehat{T}^0)=0$. The corollary then follows from Lemma~\ref{lem_residue}
applied to $G'=G_0/k$ and $T'=T_0$.
\end{proof}


\begin{theorem} \label{theo_cycle} Let  $Q$ be a quaternion division algebra over $k$, $k'$ a quadratic \'etale subalgebra of
$Q$ and let $l/k$ be a Galois cubic field extension. As before, let $K=k((t))$, $K'=k'((t))$, $L=l((t))$.
Let $C/K= C(Q_K,t)$ be the octonion algebra algebra built out  from the Cayley-Dickson doubling process.

Let $\uPsi=\uPsi_{(K',L)}$ defined in Section~\ref{sub_tori}, and let $X=\mathcal{E}(G,\uPsi)$ be the $K$-variety of embeddings defined in Section~\ref{subsec_var}.
Then $X(K)= \emptyset$, $X(K') \not = \emptyset$ and $X(L) \not = \emptyset$.
 \end{theorem}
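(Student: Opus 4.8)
The plan is to treat the three assertions separately, the equality $X(K)=\emptyset$ being the delicate one, and to feed all of them from two preliminary base-change computations.

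First I would record the splitting behaviour of the relevant algebras. Since $Q$ is a division algebra, its quadratic \'etale subalgebra $k'$ is a field, and as a maximal subfield it splits $Q$; hence $Q_{K'}=Q\otimes_k K'\cong M_2(K')$, so that $C\otimes_K K'=C(Q_{K'},t)$ has hyperbolic norm form and is split. Because $l/k$ is Galois cubic, one computes $L\otimes_K L\cong L\times L\times L$, $K'\otimes_K L\cong(k'\otimes_k l)((t))$ (a quadratic field extension of $L$), and $K'\otimes_K K'\cong K'\times K'$. Note also that for type $G_2$ the Dynkin diagram is rigid, so unoriented and canonically oriented types agree and $\mathcal{E}(G,\uPsi)$ carries the only available orientation.

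For $X(K')\neq\emptyset$ I would argue that, $C_{K'}$ being split, $G_{K'}$ is the split group of type $G_2$, whence by Theorem~\ref{GKR}(2) it admits a maximal $K'$--torus of \emph{every} type in $H^1(K',W_0)$, in particular of type $\uPsi_{K'}$. For $X(L)\neq\emptyset$ I would use that over $L$ the type is $[(K'\otimes_K L,\,L\otimes_K L)]=[(k'\otimes_k L,\,L^3)]$, whose cubic part $L^3$ is split; thus Proposition~\ref{prop_basic}(2) applies with base field $L$. Its condition (ii) requires that $G_L$ split over the quadratic algebra $k'\otimes_k L$, and indeed $C_{k'\otimes_k L}=(C_{K'})_{k'\otimes_k L}$ is split because $C_{K'}$ already is, so $G_{k'\otimes_k L}$ is split. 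The implication $(ii)\Rightarrow(i)$ then produces an $L$--embedding of the required type, i.e. $X(L)\neq\emptyset$.

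The hard part is $X(K)=\emptyset$, which I would prove by contradiction via the residue machinery. Suppose $X(K)\neq\emptyset$; then $G=\Aut(C)$ admits a maximal $K$--torus of type $[(K',L)]$. As $K'=k'\otimes_k K$ and $L=l\otimes_k K$, this is the base change of $[(k',l)]$, carried by $T\times_k K$ with $T=E\wedge^{W_0}T_0$. Since $k'$ and $l$ are fields, the cohomological hypothesis $H^1(k,\widehat T^0)=0$ holds by Lemma~\ref{lem_seq}(2), so Corollary~\ref{cor_residue} forces $G\cong H\times_k K$ for some semisimple $k$--group $H$ of type $G_2$. By the equivalence between octonion algebras and their automorphism groups \cite[33.24]{KMRT}, this means $C\cong C_0\otimes_k K$ descends to an octonion $k$--algebra $C_0$. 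But $N_C\cong n_Q\otimes\langle 1,-t\rangle=n_Q\perp(-t)\,n_Q$ has nontrivial second residue $\partial_t(N_C)$, equal to the class of $n_Q$ up to sign and nonzero in $W(k)$ because $n_Q$ is anisotropic ($Q$ being division), whereas a form extended from $k$ satisfies $\partial_t(N_{C_0}\otimes_k K)=0$. This contradiction gives $X(K)=\emptyset$.

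The main obstacle is exactly this last step: the content of $X(K)=\emptyset$ is the non-descent of $C=C(Q_K,t)$, detected by the ramification that the slot $c=t$ introduces into the norm form, combined with the specialization statement of Corollary~\ref{cor_residue}. Once the splitting $Q_{K'}\cong M_2(K')$ is established, the statements over $K'$ and $L$ are comparatively routine applications of Theorem~\ref{GKR} and Proposition~\ref{prop_basic}.
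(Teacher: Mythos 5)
Your proposal is correct and follows essentially the same route as the paper: split $C$ over $K'$ to get $X(K')\neq\emptyset$ via Theorem \ref{GKR}, reduce $X(L)\neq\emptyset$ to Proposition \ref{prop_basic}.(2) using $L\otimes_K L\cong L^3$, and rule out $X(K)$ by observing that $N_C=N_Q\perp\langle -t\rangle N_Q$ has nonzero second residue (equivalently, fails Springer's decomposition criterion for being defined over $k$), so $G$ does not descend to $k$, contradicting Lemma \ref{lem_residue}/Corollary \ref{cor_residue}. The only differences are organizational (proof by contradiction and citing Corollary \ref{cor_residue} rather than Lemma \ref{lem_residue} directly), not mathematical.
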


\begin{proof}
We have  $N_C= N_{Q,K} \otimes \langle 1, t \rangle$. Since $N_Q$ is an anisotropic $k$--form,
the quadratic form $N_C$ is anisotropic and cannot be defined over $k$ according to
Springer's decomposition theorem \cite[\S 19]{EKM}.
It follows that the $K$--group $G=\Aut(C)$ cannot be defined over $k$;
Lemma \ref{lem_residue} shows there is no embbeding of a $K$--torus
with type $[(K',L)]$ and therefore $X(K)= \emptyset$.

Since $K'$ splits $C$, $G \times_K K'$ is split so that we have $X(K')\not = \emptyset $  by Theorem \ref{GKR}.
It remains to show that $X(L)$ is not empty.
We have $[(K',L)] \otimes_K L \cong [K' \otimes_K L , L^3]$.
Since $K'$ splits $C$, $K' \otimes_K L $ splits $C$ and
Proposition \ref{prop_basic}.(2) yields that $X(L) \not=\emptyset$.
\end{proof}

\begin{remarks}{\rm
\noindent (a) The requirements on the field $k$ are mild and are satisfied by  any local or global field.

\smallskip

\noindent (b) Geometrically speaking, the variety $X/K$ is a homogeneous space under a $K$--group
of type $G_2$ whose geometric stabilizer is a maximal $K$-torus.
How far as we know, it is the simplest example of homogeneous space under a semisimple
simply connected group with a $0$--cycle of degree
one and no rational points; compare with Florence \cite{Fl} where stabilizers are finite and non-commutative and
Parimala \cite{Pa} where stabilizers are parabolic subgroups.
}
\end{remarks}

\section{\'Etale cubic algebras and hermitian forms }\label{sec_hermitian}

Our goal is to investigate further the cubic case by using
results of Haile-Knus-Rost-Tignol on  hermitian 3-forms \cite{HKRT}.

Let $C$ be an octonion algebra over $k$ and put $G= \Aut(C)$. Let $i: T \to G$ be a $k$--embedding of a rank
$2$ torus and we denote by $[(k',l)]$ its type.

We denote by  $R_{>0}$ the subset of  long roots of the root
system $R=\Phi\bigl(G_{k_s}, i( T_{k_s}) \bigr)$.
Then  $R_{>}$ is a root system of type $A_2$ and is $\Gamma_k$-stable, hence defines
a twisted datum. We consider the $k_s$--subgroup of $G_{k_s}$ generated by $T_{k_s}$ and
the root groups attached to elements of  $R_{>}$, it is a semisimple simply connected
of type $A_2$ and descends to a semisimple $k$--group $J(T,i)$ of $G$.
Our goal is to study such embeddings  $(T,i)$ by means of the subgroup $J(T,i)$.

We shall see in the sequel that such a $k$-group  $J(T,i)$ is a special unitary group form
some  hermitian 3-form for $k'/k$.

\begin{remarks}\label{rems_J} {\rm (a) J.-P. Serre explained us another way to construct the $k$--subgroup $J(T,i)$.
Define the finite $k$--group of multiplicative type
$$\mu_{T,k_s}= \ker\Bigl( T_{k_s} \xrightarrow{\prod \alpha}
\prod\limits_{\alpha \in  R_{>}} \GG_{m,k_s} \Bigl);$$
it descends to a $k$-subgroup $\mu_T$ of $T$.
We claim that $$
J(T,i)= Z_G\bigl( \mu_T \bigr).
$$
For checking that fact, it is harmless to assume that $k$ is algebraically closed.
For simplicity, we put $J= J(T,i)$, it is isomorphic to
$\SL_3$.
Since $\Phi\bigl(J, i( T))\bigr)= R_{>}$, we have
that $\mu_T=Z(J)$ \cite[XIX.1.10.3]{SGA3}; it follows that $\mu_T \cong \mu_3$
and that $J \subseteq  Z_G\bigl( \mu_T \bigr)$. Since $J$ is a semisimple subgroup
of maximal rank of $G$,  the Borel/de Siebenthal's theorem provides
a $k$--subgroup $\mu_n$ of $T$ such that $J=Z_G(\mu_n)$ \cite[Prop. 6.6]{Pe}.
Then $\mu_n \subseteq Z(J) \cong \mu_3$ so that $\mu_n= Z(J)=\mu_T$. Thus
$J=Z_G(\mu_T)$.

\smallskip

\noindent (b) If $k$ is of characteristic $3$, we can associate to $T$ another $k$--subgroup $J_{<}(T,i)$ of type $A_2$.
Let  $R_{<}$ be the subset of short  roots of the root
system $R=\Phi\bigl(G_{k_s}, i( T_{k_s}) \bigr)$. It a $3$-closed symmetric subset \cite[lemma 2.4]{Pe}
so  the $k_s$--subgroup of $G_{k_s}$ generated by $T_{k_s}$ and
the root groups attached to elements of  $R_{<}$ defines a semisimple $k_s$--subgroup $J_{<}$ of $G_{k_s}$ ({\it ibid}, Th. 3.1);
furthermore we have $\Phi\bigl(J, i( T_{k_s}) \bigr)=R_{<}$. The $k_s$--group $J_{<}$ descends to a semisimple $k$--group
 $J_{<}(T,i)$. It is  semisimple of type $A_2$ and adjoint since $R_{<}$ spans $\widehat T(k_s)$.

}
\end{remarks}

\subsection{Rank 3 hermitian forms and octonions}

Let $k'/k$ be a quadratic \'etale algebra.
From a construction of Jacobson \cite[\S 5]{J} (see  \cite[\S 6]{KPS} for the generalization to an arbitrary base field),
we recall that we can attach to a rank $3$ hermitian form $(E,h)$ (for $k'/k)$) with trivial (hermitian) discriminant
an octonion $k$--algebra $C(k',E,h) = k' \oplus E$.
Furthermore, the $k$--group $\SU(k',E,h)$ embeds in $\Aut(C(k',E,h))$ by $g.(x,e) = (x, g.e)$.
We denote by $J(k',E,h)$ this $k$--subgroup and we observe that
$k'$ is the $k$--vector subspace of $C(k',E,h)$ of fixed points for the action  of
$J(k',E,h)$ on $C(k',E,h)$. Also $J(k',E,h)$ is the $k$--subgroup of  $\Aut(C(k',E,h))$ acting trivially
on $k'$.

In  a converse way (see \cite[exercise 6.(b) page 508]{KMRT})), if we are given an embedding of
unital composition $k$--algebra $k' \to C$,
we denote by $E$ the orthogonal subspace  of $k'$ for $N_C$.
For any $x,y \in k'$ and $z \in E$, we have $0=\langle xy,z\rangle_C = \langle y, \sigma_C(x) \, z\rangle_C$ by using the identity
 \cite[1.3.2]{SV}, so that the multiplication $C \times C \to C$ induces a bilinear $k$--map  $k' \times E \to E$.
Then $E$ has a natural $k'$--structure and the restriction of $N_C$ to $E$ defines a hermitian form $h$ (of trivial discriminant)
such that  $C=C(k',E,h)$.

Furthermore, if we have two subfields $k'_1, k'_2$ of $C$ isomorphic to $k'$,
the ``Skolem--Noether'' property \cite[33.21]{KMRT} shows that there exists $g \in G(k)$ mapping $k'_1$ to $k'_2$.
Hence the hermitian forms $(E_1,h_1)$, $(E_2,h_2)$ are isometric.

\begin{remark} {\rm Of course, in such a situation,  $h$ can be diagonalized as $\langle - b, - c, bc \rangle$
and we have  $n_{C(k',E,h)}= n_{k'/k} \otimes \langle \langle b,c  \rangle \rangle$.
If we take $\langle - 1, - 1, 1 \rangle$, we get one form of the split octonion algebra
$C_0$ and then a  $k$--subgroup $J_0=\SL_3$ of $\Aut(C_0)$.
}
\end{remark}

\begin{lemma} In the above setting,  we put $G= \Aut(C(k',E,h))$ and $J= J(k',E,h)$.

\smallskip

\noindent (1) There is a natural exact sequence of algebraic $k$--groups
 $1 \to J \to N_G(J) \to
\ZZ / 2 \ZZ  \to 1$.

\smallskip

\noindent (2) The map $N_G(J)(k) \to
\ZZ / 2 \ZZ$ is onto and      the induced action of $\ZZ/ 2\ZZ$ on $k'$ is the Galois action.

\end{lemma}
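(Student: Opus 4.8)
The plan is to realise $N_G(J)$ as the normaliser of the finite centre of $J$, to identify the quotient with $\Aut(\mu_3)=\ZZ/2\ZZ$, and then to produce a $k$--rational lift of the nontrivial class by extending the conjugation of $k'$ to an automorphism of $C$. Write $\mu = Z(J)$ for the centre of $J$. Over $k_s$ the quadratic algebra $k'$ splits and $J$ becomes an $\SL_3$ of maximal rank inside the split $G_2$, with centre $\mu\cong\mu_3$, so $\mu$ is a $k$--form of $\mu_3$ and, by the centraliser computation of Remark \ref{rems_J}(a), $J = Z_G(\mu)$. Since the centre is characteristic, $N_G(J)$ normalises $\mu$; conversely any subgroup normalising $\mu$ normalises $Z_G(\mu)=J$, so $N_G(J)=N_G(\mu)$. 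The conjugation action of $N_G(\mu)$ on $\mu$ has kernel $Z_G(\mu)=J$, yielding an injection $N_G(J)/J \hookrightarrow \underline{\Aut}(\mu)$. As the only ring automorphisms of $\ZZ/3\ZZ$ are $\pm 1$, the automorphism group scheme of any form of $\mu_3$ is the constant group $\ZZ/2\ZZ$; hence $N_G(J)/J \hookrightarrow \ZZ/2\ZZ$.

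To get surjectivity of this inclusion, and so exactness of the sequence in (1), I would argue geometrically. Let $T$ be a maximal torus of $G$ contained in $J$, so that $\mu\subset T$, and lift the central element $c \in W_0$ — which acts by $-\id$ on the root space (see \S\ref{sub_diedral}) — to $n \in N_G(T)(k_s)$. Because $c$ sends each long root to its negative, $n$ normalises the subgroup $J_{k_s}$ generated by $T$ and the long root groups; because $c$ acts by inversion on $T$ it acts by inversion, hence nontrivially, on $\mu \cong \mu_3$. Thus $n \in N_G(J)(k_s)\setminus J(k_s)$ surjects onto the generator, which proves (1).

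For (2) it suffices to exhibit one $k$--rational element of $N_G(J)$ outside $J$. The key remark is that $k' = C^{J}$ is the fixed subalgebra of $J$, so any $\tau \in G(k)$ with $\tau(k')=k'$ automatically normalises $J$: for $g \in J$ and $x \in k'$ one computes $(\tau g \tau^{-1})(x)=\tau(g(\tau^{-1}x))=\tau(\tau^{-1}x)=x$, so $\tau g\tau^{-1}\in J$. Now apply the extension theorem for isomorphisms of composition subalgebras \cite[33.21]{KMRT} to the two embeddings $\iota,\ \overline{\iota}\colon k'\to C$, namely the inclusion and its composite with the conjugation of $k'/k$, which have the same image: this produces $\tau \in \Aut(C)(k)=G(k)$ with $\tau|_{k'}$ equal to the conjugation of $k'/k$. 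Then $\tau(k')=k'$ gives $\tau \in N_G(J)(k)$, while $\tau|_{k'}\neq \id$ gives $\tau\notin J(k)$; hence $N_G(J)(k)\to\ZZ/2\ZZ$ is onto. Finally $N_G(J)$ acts on the characteristic subspace $C^{J}=k'$ with $J$ acting trivially, so the action factors through $N_G(J)/J=\ZZ/2\ZZ$, its nontrivial element being represented by $\tau$; as $\tau|_{k'}$ is the conjugation, this is exactly the Galois action of $\Gal(k'/k)$ on $k'$.

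The root--system bookkeeping of the geometric step and the equality $J=Z_G(\mu)$ are already contained in Remark \ref{rems_J}, so the one point demanding care is the $k$--rationality in (2): one must invoke a form of Skolem--Noether strong enough to \emph{prescribe} the conjugation on $k'$, not merely to map $k'$ onto $k'$ set--theoretically. Should one wish to avoid \cite[33.21]{KMRT}, the element $\tau$ can instead be built explicitly — diagonalise $h=\langle a_1,a_2,a_3\rangle$ over $k$ in a $k'$--orthogonal basis of $E$ and set $\tau(x,e)=(\overline{x},\phi(e))$ with $\phi$ the $\sigma$--semilinear map fixing that basis — after which one checks that $\phi$ is a semilinear isometry compatible with the cross product in Jacobson's multiplication, so that $\tau$ is an algebra automorphism restricting to conjugation on $k'$.
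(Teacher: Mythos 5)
Your proof is correct, and part (2) coincides with the paper's argument: both extend the conjugation of $k'/k$ to an element of $G(k)$ via the Skolem--Noether property \cite[33.21]{KMRT} and use the characterization of $J$ as the subgroup acting trivially on $k'=C^J$ to see that this element normalizes $J$. For part (1) you take a slightly different, but equally valid, route. The paper computes $Z_G(J)=Z(J)$ (via $Z_G(J)\subseteq Z_G(T)=T$ for a maximal torus $T$ of $J$) and reads the sequence off the diagram $1\to Z_G(J)\to N_G(J)\to \Aut(J)\to \Autext(J)=\ZZ/2\ZZ$, deferring surjectivity of $N_G(J)\to\ZZ/2\ZZ$ until the rational element of (2) is produced. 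You instead use the identity $J=Z_G(\mu)$ from Remark \ref{rems_J}(a) to get $N_G(J)=N_G(\mu)$ and embed $N_G(J)/J$ into $\Aut(\mu)\cong\ZZ/2\ZZ$, and then you establish surjectivity already at the geometric level by lifting the central Weyl element $c\in W_0$ (which preserves the long roots and inverts $\mu_3$) to $N_G(T)(k_s)$. What your variant buys is a self-contained proof of (1) that does not lean on (2); what the paper's buys is brevity, since the rational element needed for (2) does double duty. Both arguments rest on the same two pillars — the identification of the quotient with $\ZZ/2\ZZ$ and the rationality supplied by Skolem--Noether — so there is no gap to report.
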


\begin{proof} (1) We
consider the commutative exact diagram of $k$--groups
\[
 \xymatrix{
&&& 1 \ar[d] \\
1  \ar[r]   &  Z(J) \ar[d]\ar[r]& J  \ar[d]\ar[r] & J/Z(J) \ar[r] \ar[d]&  1 \\
1  \ar[r]   &  Z_G(J) \ar[r]& N_G(J)  \ar[r] & \Aut(J) \ar[r] \ar[d] &  1 \\
&&& \Autext(J) =\ZZ/2\ZZ \ar[d] \\
&&& 1
}
\]
Let $T$ be a maximal $k$--torus of $J$, it is still maximal in $G$. Then
$Z_G(J) \subseteq Z_G(T) =T$ hence $Z_G(J) \subseteq Z(J)$, so that $Z(J)= Z_G(J)$.
The diagram provides then an exact sequence
$1  \to J \to N_G(J) \to \ZZ/2 \ZZ$.
We postpone the surjectivity.

\smallskip

\noindent (2) Now by the ``Skolem-Noether property'' \cite[33.21]{KMRT}, the Galois action
$\sigma: k' \to k'$ extends to an element  $g \in G(k)$.
Given $u \in J(k)$, $gug^{-1}$ is an element of $G(k)$ which acts
trivially on $k'$, so belongs to $J(k)$. Since it holds for any field extension of
$k$, we have that $g \in N_G(J)(k)$. We conclude that
the map $N_G(J) \to \ZZ/2 \ZZ$ is surjective and that the induced action of
$\ZZ/2 \ZZ$ on $k'$ is the Galois action.
\end{proof}

Let $C$ be an octonion algebra and put $G= \Aut(C)$ and let $J$ be a semisimple  $k$--subgroup of type $A_2$ of $G$.
Then $J$ is of maximal rank and we can appeal again to the Borel-de Siebenthal's classification theorem \cite[th. 3.1]{Pe}.
If the characteristic of $k$ is not $3$, then $J$  is geometrically conjugated to the standard  $\SL_3$ in $G_2$ and is then
simply connected. If the characteristic $k$ is $3$, then $J$ may arise as in Remark \ref{rems_J}.(b) from the short roots
associated to a maximal $k$--torus of $J$; in that case $J$ is adjoint.
We can state  a similar statement as Lemma \ref{lem_maximal}.

\begin{lemma} \label{lem_subgroup}
Let
$J$ be a semisimple simply connected $k$--subgroup of type $A_2$ of $G=\Aut(C)$ and we denote by $k'/k$ the quadratic
\'etale algebra attached to the quasi--split form of $J$.
Then there exists a rank 3 hermitian form $(E,h)$ for $k'/k$ an
isomorphism $C \cong C(k',E,h)$,  an isomorphism  $J \simlgr J(k',E,h)$ such that the following
diagram commutes
\[
 \xymatrix{
J \enskip \ar[d]^{\wr}  \ar@{^{(}->}[rr] && \enskip G \ar[d]^{\wr} \\
J(E,h) \enskip \ar@{^{(}->}[rr]^{j} &&  \enskip \Aut(  C(k',E,h)).
}
\]

\end{lemma}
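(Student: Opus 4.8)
The plan is to mimic the proof of Lemma \ref{lem_maximal}, with $J$ (a long-root subgroup of type $A_2$) playing the role of the long factor $H_>$ and with a quadratic \'etale subalgebra, obtained as the fixed-point algebra of $J$, playing the role of the quaternion subalgebra $Q$. First I would record the split model. Since $J$ is simply connected, the Borel--de Siebenthal classification recalled before the lemma shows that over $k_s$ we may take $J_{k_s}$ to be the standard long-root $\SL_3 \subseteq G_{k_s}=\Aut(C_{0,k_s})$ coming from the Jacobson construction $C_0 = k'_0 \oplus E_0$ with $k'_0 = k\times k$. In this model $\SL_3$ acts trivially on $k'_0$ and acts on $E_0 \cong k_s^3 \oplus (k_s^3)^\vee$ through the standard representation and its dual; hence the space of $J_{k_s}$-invariants in $C_{0,k_s}$ is exactly $k'_0$, and $N_{C_0}$ restricts to the (hyperbolic, hence nondegenerate) norm form of $k'_0$. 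The observation I would isolate here is that the outer automorphism of $J_{k_s}$ interchanges the standard and the dual summand of $E_0$, and therefore acts on $k'_0$ by the nontrivial automorphism of $k\times k$.

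Next I would descend. Set $V := C^J$, the space of $J$-invariants in $C$; it contains $1_C$ and is stable under multiplication, so it is a unital $k$-subalgebra. By the split computation $V_{k_s}$ is two-dimensional and $N_C|_V$ is nondegenerate, so $V$ is a quadratic \'etale subalgebra of $C$. The point I most want to pin down, and the one I expect to be the main obstacle, is the identification $V \cong k'$ as $k$-algebras, where $k'$ is the quadratic \'etale algebra attached to the quasi-split form of $J$. Here the split observation above is decisive: it says that the natural action of $\Autext(J) = \ZZ/2\ZZ$ on $V_{k_s}$ is the nontrivial automorphism of $k\times k$, which gives a $\Gamma_k$-equivariant identification of the $\ZZ/2\ZZ$-form that defines the quasi-split inner class of $J$ with the $\ZZ/2\ZZ$-form $V$. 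By Galois descent of quadratic \'etale algebras this yields $V \simlgr k'$.

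Now $k' = V$ is realized as a unital composition subalgebra $k' \hookrightarrow C$, so I would invoke the converse Jacobson construction recalled just before the lemma: setting $E = (k')^\perp$ for $N_C$ produces a rank $3$ hermitian form $(E,h)$ of trivial discriminant together with an identification $C = C(k',E,h)$. Under this identification $J(k',E,h)=\SU(k',E,h)$ is precisely the $k$-subgroup of $\Aut(C(k',E,h))$ fixing $k'$ pointwise. Since $J$ fixes $V=k'$ pointwise by the very definition of $V$, we get $J \subseteq J(k',E,h)$; both groups are connected semisimple of type $A_2$, hence of dimension $8$, so the inclusion is forced to be an equality $J = J(k',E,h)$. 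Taking $C \simlgr C(k',E,h)$ and $J \simlgr J(k',E,h)$ as the two vertical isomorphisms, the inclusion $J \hookrightarrow G$ is by construction carried to $j : J(k',E,h) \hookrightarrow \Aut(C(k',E,h))$, so the square commutes and the lemma follows.
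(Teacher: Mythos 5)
Your proposal is correct and follows essentially the same route as the paper: identify $(G,J)$ over $k_s$ with the standard pair $(G_0,\SL_3)$, take the fixed-point subalgebra $C^J$ (a quadratic \'etale composition subalgebra by Galois descent from the split computation), apply the converse Jacobson construction to $E=(k')^\perp$, and conclude $J=J(k',E,h)$ by a dimension count. The only difference is that you justify the identification $C^J\cong k'$ by tracking the action of $\Autext(J)$ on the invariants, where the paper simply asserts that $C^J$ is the discriminant algebra of $J$; your extra detail is a correct elaboration of that step rather than a different argument.
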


\begin{proof} Given a $k$-maximal torus $T$ of $G$, we consider the root system $\Psi(G_{k_s},T_{k_s})$. There
are exactly 6 long roots in $\Psi(G_{k_s},T_{k_s})$ which form an $A_2$-subsystem of $\Psi(G_{k_s},T_{k_s})$.
Let $H$ be the subgroup of $G_{k_s}$ which is generated by $T_{k_s}$ and the root groups of long roots. Since the Galois
action preserves the length of a root, the group $H$ is defined over $k$. Hence given a $k$-maximal torus
$T$, there is exactly one subgroup $H$ of $G$ which is a twisted form of $\SL_3$ and contains $T$. Since all maximal
$k$-split tori are conjugated over $k$,
the split group $G_0$ of type $G_2$ has one single conjugacy $G_0(k)$--class of $k$--subgroups isomorphic
to  $\SL_3$.
It follows that the couple $(G,J)$ is isomorphic over $k_s$ to
the couple $(G_0,J_0)$. In particular, by Galois descent, the subspace  of fixed points
of $J$  on $C$ is an \'etale subalgebra $l$ of rank $2$ which is a unital composition subalgebra of $C$.
We define then the orthogonal subspace $E$ of $l$ in $C$.
Then $E$ has a natural structure of $l$--vector space
and carries a hermitian form $h$ of trivial (hermitian) discriminant  such that $C(l,E,h)=C$
(see \cite[exercise 6.(b) page 508]{KMRT}).
But $J$ acts trivially on $l$, so that $J \subseteq  J(l,E,h)$
By dimension reasons, we conclude that  $J =  J(l,E,h)$.
Then $l/k$ is the discriminant \'etale algebra of $J$ hence $k'=l$.
\end{proof}

\begin{remark}{\rm
 Note that in the above proof, we didn't put any assumption on the characteristic of $k$.
 However, in characteristic $\not = 2,3$, Hooda proved the above lemma
in a quite different way \cite[th. 4.4]{H}.
}
\end{remark}

\subsection{Embedding maximal tori}

From now on, we assume for simplicity than the characteristic exponent of $k$ is not $2$.

\begin{lemma}\label{lem_emb} Let $G =\Aut(C)$ be a semisimple $k$--group of type $G_2$.
Let $k'$ (resp. \cskip $l$) be a quadratic (resp. \cskip cubic) \'etale algebra of $k$.
Let $i:T \to G$ be a $k$--embedding  of a  maximal $k$--torus
such  that $\type( T,i) = [(k',l)]$ and denote by $J(T,i)$ the  associated $k$--subgroup of $G$.

\smallskip

(1)  The discriminant algebra of $J(T,i)$ is $k'/k$.

\smallskip

(2) By Lemma \ref{lem_subgroup}, we can write $C= C(k',E,h)$ and identify $J(T,i)$ with $J(k',E,h)$.
Then there  is a $k'$--embedding  $f:k' \otimes_k l \to M_3(k')$ such that
 $f\circ(\sigma \otimes id)=\tau_h\circ f$ on $k' \otimes_k l $, where  $\tau_h$ is the involution on $M_3(k')$ induced by $h$.

\end{lemma}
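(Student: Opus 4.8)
The plan is to read off both statements from the combinatorics of how the Galois action on the $G_2$ root system restricts to the long-root $A_2$ subsystem, together with the dictionary between maximal tori of $\SU_3$ and cubic \'etale subalgebras. Throughout write $R=\Phi(G_{k_s},i(T_{k_s}))$ and recall from \S\ref{sub_diedral} that $\Gamma_k$ acts on $R$ through $W_0=\langle c\rangle\times S_3$, where $c$ acts by $-\id$ and $S_3$ permutes the $\varepsilon_i$. The long roots $R_{>}$ form an $A_2$-system, and a direct inspection shows that $\Aut(R_{>})=\langle -\id\rangle\times W(R_{>})\cong\ZZ/2\ZZ\times S_3$ as well, with $-\id\notin W(R_{>})=S_3$. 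The restriction map $\Aut(R)\to\Aut(R_{>})$ carries $c=-\id$ to $-\id$ and the permutation factor $S_3$ to $W(R_{>})=S_3$; since both sides are genuine direct products, this restriction is an isomorphism respecting the two factors.

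For (1): the discriminant (equivalently, the quasi-split outer form) of the simply connected type $A_2$ group $J=J(T,i)$ is governed by the image of the Galois action in $\Out(A_2)=\ZZ/2\ZZ$, i.e. by the class of the $-\id$-component. By the previous paragraph this component is exactly the central factor of $W_0$, whose class is $k'$. Hence the discriminant algebra of $J(T,i)$ is $k'$. This is consistent with Proposition \ref{prop_basic}.(1): over $k'$ the central factor dies, so $J_{k'}$ becomes inner, i.e. $\SL_3$.

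For (2): write $B=\End_{k'}(E)\cong M_3(k')$ and let $\tau=\tau_h$ be the adjoint (second kind) involution of $h$, so that $J=J(k',E,h)=\SU(B,\tau)$ and $i(T)\subset J$. Put $L=C_B(i(T))$. As $i(T)$ is a maximal torus of the $A_2$-group $J$, $L$ is a maximal commutative \'etale $k'$-subalgebra of $B$ of $k'$-rank $3$; since $i(T)$ is defined over $k$, $L$ is $\tau$-stable, and because $\tau$ is of the second kind we obtain $L=k'\otimes_k L_0$ with $L_0=L^{\tau}$ a cubic \'etale $k$-algebra and $\tau|_L=\sigma\otimes\id$. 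It remains to identify $L_0$ with $l$. By the first paragraph the type of $(T,i)$ in $J$ is the restriction of $[(k',l)]$, and since that restriction respects the direct product $\ZZ/2\ZZ\times S_3$ and the outer factor acts trivially on the Weyl factor, the oriented type of $(T,i)$ relative to the quasi-split form with discriminant $k'$ lands in the untwisted $H^1(k,S_3)$ and equals $l$. On the other hand, the maximal torus of $\SU(B,\tau)$ attached to $L=k'\otimes_k L_0$ has oriented type $[L_0]\in H^1(k,S_3)$, as one sees after base change to $k'$, where $J_{k'}=\SL_3$ and the torus attached to a cubic \'etale algebra has that algebra as oriented type. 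Therefore $[L_0]=l$, i.e. $L_0\cong l$. Taking $f$ to be the composite $k'\otimes_k l\xrightarrow{\ \sim\ }k'\otimes_k L_0=L\hookrightarrow B=M_3(k')$ yields a $k'$-algebra embedding, and the compatibility $f\circ(\sigma\otimes\id)=\tau_h\circ f$ is automatic since $\tau_h|_L$ is the unique second-kind involution of $L$ with fixed algebra $L_0$.

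The main obstacle is the step $L_0\cong l$. One must resist concluding it from a mere isomorphism of tori: by Remark \ref{rootdata_tori} the $k$-torus $T^{(k',l)}$ does not determine the pair $(k',l)$, so the identification genuinely requires tracking the oriented type (the $A_2$ root datum together with its Galois action), not just the isomorphism class of the torus. The two ingredients that make this precise are the restriction isomorphism $\Aut(R)\xrightarrow{\ \sim\ }\Aut(R_{>})$ of the first paragraph and the standard correspondence between maximal tori of $\SU_3$ and cubic \'etale subalgebras.
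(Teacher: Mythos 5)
Your overall strategy mirrors the paper's: part (1) is read off from the Galois action on the long-root subsystem exactly as in the paper, and for part (2) you replace the paper's citation of Theorem 1.15(2) of [Le] by a direct argument with the centralizer $L=C_B(i(T))$ and the dictionary between maximal tori of $\SU_3$ and cubic \'etale subalgebras, which is a reasonable self-contained substitute. However, your first paragraph contains a false claim on which both parts then rest: the restriction isomorphism $\Aut(R)\simlgr\Aut(R_{>})$ does \emph{not} carry the factor $S_3=\langle s_{\alpha_1},s_{2\alpha_1+\alpha_2}\rangle$ of $W_0$ into $W(R_{>})$. Indeed $\langle 3\alpha_1+2\alpha_2,\alpha_1^{\vee}\rangle=3\cdot 2+2\cdot(-3)=0$, so $s_{\alpha_1}$ fixes the long root $3\alpha_1+2\alpha_2$ while swapping $\alpha_2$ and $3\alpha_1+\alpha_2$; since $W(A_2)$ acts simply transitively on the six roots of $A_2$, no nontrivial element of $W(R_{>})$ fixes a root, so $s_{\alpha_1}|_{R_{>}}$ is the nontrivial diagram automorphism and represents the nontrivial class of $\Out(A_2)=\Aut(R_{>})/W(R_{>})$. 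The subgroup of $W_0$ that restricts onto $W(R_{>})$ is the \emph{other} complement of $\langle c\rangle$, the one generated by the long-root reflections $s_\beta=c\, s_\alpha$, i.e. $\{(\mathrm{sgn}\,\pi,\pi)\}$ in the coordinates of \S\ref{sub_tori}. Consequently the composite $W_0\to\Out(A_2)$ is $(\epsilon,\pi)\mapsto\epsilon\cdot\mathrm{sgn}(\pi)$, not the first projection, and your computation actually yields that the discriminant algebra of $J(T,i)$ has class $[k']+[\mathrm{disc}(l)]$. As a sanity check, a maximal torus $R^{1}_{l/k}(\GG_m)$ of the split long-root subgroup $\SL_3\subset G_0$ has Galois action through $W(R_{>})$, hence type $[(\mathrm{disc}(l),l)]$ in the convention of \S\ref{sub_tori}, while its associated $J$ is $\SL_3$ itself, with split discriminant algebra; similarly, in Proposition \ref{prop_biquad2} the subalgebra of $C$ fixed by $i(T)$ together with the long factor $\SL_1(Q)$ is $k_1$, of class $[k']+[\mathrm{disc}(l)]$.

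To be fair, the paper's own proof of (1) makes exactly the same move (``since the Weyl group of $\Psi(J_{k_s},i(T)_{k_s})$ is $S_3$, the $\star$-action is the projection onto $\ZZ/2\ZZ$''), silently identifying the short-root copy of $S_3$ used in the definition of the type with the long-root copy $W(R_{>})$; these are distinct complements of $\langle c\rangle$ in $W_0$, so you have reproduced the weak point rather than closed it. The damage to your part (2) is only partial: the cubic algebra $L_0=L^{\tau}$ is governed by the image of the Galois action in $W_0/\langle c\rangle\cong S_3$, which is insensitive to the choice of complement, so the identification $L_0\cong l$ is sound; but the quadratic \'etale algebra over which the hermitian form lives and into whose matrix algebra $k'\otimes_k l$ is to be embedded must be the true discriminant algebra of $J(T,i)$, so the sign issue propagates there. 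Before using the lemma you should either pin down which copy of $S_3$ the convention of \S\ref{sub_tori} really intends, or restate (1) with the class $[k']+[\mathrm{disc}(l)]$ and adjust (2) and its downstream applications accordingly.
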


\begin{proof} (1) We put $J=J(T,i)$. We consider the  Galois action on the root system $\Psi\bigl( G_{k_s}, i(T)_{k_s} \bigr)$
and its subroot system $\Psi\bigl( J_{k_s}, i(T)_{k_s} \bigr)= \Psi\bigl( G_{k_s}, i(T)_{k_s} \bigr)_{>}$.
 It is given by a map  $f: \Gamma_k \to \ZZ/ 2 \ZZ  \times S_3$
defining $[(k',l)]$. Since the Weyl group of  $\Psi\bigl( J_{k_s}, i(T)_{k_s} \bigr)$ is $S_3$, it follows that
the $\star$-action of $\Gamma_k$ on the Dynkin diagram $A_2$ is the projection $\Gamma_k \to \ZZ/ 2\ZZ$.
Therefore   the discriminant algebra of $J(T,i)$ is $k'/k$.

\smallskip

\noindent (2) We have then a $k$--embedding $i: T \to J=\SU(k',E,h)$. Its  type (absolute with respect to $J$) is
$[(k',l)] \in H^1(k, \ZZ/2\ZZ \times S_3)$. By \cite[th 1.15.(2)]{Le}, there is a $k'$--embedding  $k' \otimes_k l \to M_3(k')$ with respect to
the conjugacy involution $\sigma \otimes id$ on $k' \otimes_k l $ and the involution $\tau_h$ attached to $h$.

\end{proof}

\begin{proposition}\label{prop_emb} Let $G =\Aut(C)$ be a semisimple $k$--group of type $G_2$.
 Let $k'$ (resp. $l$) be a quadratic (resp. cubic) \'etale $k$-algebra.
 We denote by $X$ the variety of $k$--embedding of maximal tori in $G$ attached to the twist of $\Psi_0$
by  $(k',l)$ (seen as a $W_0$--torsor). The following are equivalent:

\smallskip

(i) $X(k) \not = \emptyset$, that is there exists an embedding $i:T \to G$ of a maximal $k$--torus of type
$[(k',l)]$.

\smallskip

(ii) there exists a  rank 3 hermitian form $(E,h)$ for $k'/k$ of trivial (hermitian) discriminant
such that $C\cong C(k',E,h)$ and such that there exists a  $k'$-embedding of $k' \otimes_k l \to \End_{k'}(E)$ with respect to
the conjugacy involution on $k'$ and the involution $\tau_h$ attached to $h$.

\smallskip

(iii) there exists a  rank 3 hermitian form $(E,h)$ for $k'/k$ of trivial (hermitian) discriminant
such that $C\cong C(k',E,h)$ and an element $\lambda \in  l^\times$
such that $(l \otimes_k k', \mathrm{t'_\lambda}) \simeq (E,h)$,  where $\mathrm{t'_\lambda}(x,y)=\mathrm{tr}_{l\otimes k'/k'}(\lambda x \, \sigma(y))$.

\end{proposition}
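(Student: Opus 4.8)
The plan is to run the cycle $(i)\Rightarrow(ii)\Rightarrow(iii)\Rightarrow(i)$. Throughout I write $l'=k'\otimes_k l$ for the cubic \'etale $k'$--algebra, equipped with the conjugation $x\mapsto \bar x=(\sigma\otimes\id)(x)$ induced by $k'/k$, whose fixed algebra is $l$. The implication $(i)\Rightarrow(ii)$ is essentially a repackaging of the structural results already proved: given $i:T\to G$ of type $[(k',l)]$, Lemma \ref{lem_emb}.(1) identifies the discriminant algebra of $J(T,i)$ with $k'$, Lemma \ref{lem_subgroup} produces a rank $3$ hermitian form $(E,h)$ of trivial discriminant together with an isomorphism $C\cong C(k',E,h)$ carrying $J(T,i)$ to $J(k',E,h)=\SU(k',E,h)$, and Lemma \ref{lem_emb}.(2) then yields precisely the compatible $k'$--embedding $f:k'\otimes_k l\to \End_{k'}(E)$ with $\tau_h\circ f=f\circ(\sigma\otimes\id)$ demanded by $(ii)$.

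The technical heart is $(ii)\Rightarrow(iii)$, a transfer argument for hermitian forms. The embedding $f$ makes $E$ a module over the cubic \'etale $k'$--algebra $l'$; since $\dim_{k'}E=3=\dim_{k'}l'$, the module $E$ is free of rank one, say $E=l'\cdot e_0$. Because $l'/k'$ is \'etale its trace pairing is nondegenerate, so the assignment $H\mapsto \tr_{l'/k'}\circ H$ is a bijection from $l'$--hermitian forms on $E$ onto those $k'$--hermitian forms $h$ on $E$ for which multiplication by every $a\in l'$ is adjoint to multiplication by $\bar a$, i.e. $h(ax,y)=h(x,\bar a y)$. Our $h$ lies in the target exactly by the compatibility $\tau_h\circ f=f\circ(\sigma\otimes\id)$, so $h=\tr_{l'/k'}\circ H$ for a unique $l'$--hermitian $H$. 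On the rank-one module $E=l'e_0$ one has $H(ae_0,be_0)=\lambda a\bar b$ with $\lambda=H(e_0,e_0)$, and hermitian symmetry forces $\bar\lambda=\lambda$, i.e. $\lambda\in l$; nondegeneracy gives $\lambda\in l^\times$. Transporting through the $l'$--linear isometry $E\cong l'$, $ae_0\mapsto a$, yields $h(a,b)=\tr_{l'/k'}(\lambda a\bar b)=t'_\lambda(a,b)$, so $(E,h)\simeq(l',t'_\lambda)$, which is $(iii)$.

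For $(iii)\Rightarrow(i)$ I would build the embedding by hand. From $(iii)$ we have $C\cong C(k',l',t'_\lambda)$, whence $J:=\SU(k',l',t'_\lambda)$ is a semisimple simply connected $A_2$--subgroup of $G=\Aut(C)$ (using the standing hypothesis that the characteristic exponent is not $2$). Letting $l'$ act on $E=l'$ by multiplication, the elements $a\in (l')^\times$ with $a\bar a=1$ and $N_{l'/k'}(a)=1$ form a maximal $k$--torus of $J$, and one recognizes it as exactly the torus $T^{(k',l)}$ of Section \ref{sub_tori}. I would then check that the resulting embedding $i:T^{(k',l)}\to J\hookrightarrow G$ has type $[(k',l)]$: the long roots of $G$ at $i(T)$ are precisely the roots of $J$, the $S_3$--part of the Galois action permutes the three primitive idempotents of $l\otimes_k k_s$ and so records the cubic algebra $l$, while the central $\ZZ/2\ZZ$--part is read off from the discriminant $k'$ of $J$. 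This exactly reverses the bookkeeping of Lemma \ref{lem_emb}.(1), and closes the cycle.

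The main obstacle is twofold, and both parts lie in the steps just sketched. First, in $(iii)\Rightarrow(i)$ one must verify that the concretely built torus carries the full $W_0$--type $[(k',l)]$ in $G$, and not merely the $A_2$--type $l$ of its embedding into $J$; this is genuinely necessary, since by Remark \ref{rootdata_tori} the isomorphism class of the torus alone does not determine the type. Second, the correspondence $H\mapsto\tr_{l'/k'}\circ H$ used in $(ii)\Rightarrow(iii)$ must be shown bijective onto the $l'$--compatible $k'$--hermitian forms. Both facts reduce to standard input — the nondegeneracy of the \'etale trace form and the computation of the Galois action on the long-root $A_2$--system — but this is where the real content sits. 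Alternatively, one may obtain the type assertion directly by reading \cite[th. 1.15.(2)]{Le} as an equivalence for embeddings of tori into special unitary groups, which simultaneously handles $(ii)\Leftrightarrow(i)$ in both directions.
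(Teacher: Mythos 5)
Your cycle $(i)\Rightarrow(ii)\Rightarrow(iii)\Rightarrow(i)$ is correct, but it diverges from the paper's argument in two of the three steps. The paper proves $(i)\Leftrightarrow(ii)$ directly in both directions: the forward direction via Lemma \ref{lem_emb}.(2) exactly as you do, and the converse by quoting \cite[th.~1.15.(2)]{Le}, which produces an embedding $i:T\to \SU(k',E,h)$ of absolute type $[(k',l)]$ whose composite into $G$ still has type $[(k',l)]$; the equivalence $(ii)\Leftrightarrow(iii)$ is then outsourced entirely to the embedding criterion of \cite[prop.~1.4.1]{BLP}. You instead (a) reprove the relevant direction of the \cite{BLP} criterion by hand — the observation that injectivity of $f$ forces $E$ to be free of rank one over $l'=k'\otimes_k l$, followed by the trace-form descent $h=\tr_{l'/k'}\circ H$ and the computation $H(ae_0,be_0)=\lambda a\bar b$ with $\lambda=\bar\lambda\in l^\times$ — and (b) close the loop through $(iii)\Rightarrow(i)$ by exhibiting the torus $\{a\in(l')^\times : a\bar a=1,\ N_{l'/k'}(a)=1\}=T^{(k',l)}$ concretely inside $\SU(k',l',t'_\lambda)\subset G$. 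Both substitutions are sound: the transfer argument is the standard one and your unitarity/determinant computation is right. What your route buys is self-containedness and an explicit model of the embedded torus; what it costs is that the verification that this concrete torus has full $W_0$--type $[(k',l)]$ in $G$ (and not merely the expected type in the $A_2$--subgroup) is left as a sketch — you correctly identify this as the crux, in view of Remark \ref{rootdata_tori}, and your proposed bookkeeping (long roots of $G$ at $i(T)$ are the roots of $J$, the $S_3$--part read off the idempotents of $l\otimes_k k_s$, the $\ZZ/2\ZZ$--part off the discriminant $k'$) is exactly the reverse of Lemma \ref{lem_emb}.(1) and would need to be written out; alternatively your fallback of invoking \cite[th.~1.15.(2)]{Le} is precisely what the paper does.
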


\begin{proof} The implication $(i) \Longrightarrow (ii)$ follows from Lemma \ref{lem_emb}.(2).
Conversely, we assume (ii). Then $G \cong \Aut(C(k',E,h))$ admits the $k$--subgroup  $J(k',E,h) \simlgr \SU(k',E,k)$.
By \cite[th 1.15.(2)]{Le}, there is a $k$--embedding $i: T \to \SU(k',E,k)$ of maximal torus
whose absolute type (with respect to $J$) is $[(k',l)]$.  The $k$--embedding $i: T \to \SU(k',E,k) \to G$
has also absolute type $[(k',l)]$.

The equivalence  $(ii) \Longleftrightarrow (iii)$ follows from the embedding criterion of
$k' \otimes_k l \to \End_{k'}(E)$ given by  proposition 1.4.1 of \cite{BLP}.
\end{proof}

\medskip

Let $k'$, $l$ be as in Proposition \ref{prop_emb}.
Let $\delta\in k^{\times}/k^{\times^{2}}$ be the discriminant of $l$ and $d\in k^{\times}/k^{\times^{2}}$ be the discriminant of $k'$.
Let $B$ be a central simple algebra over $k'$ with an involution $\sigma$ of the second kind. Let $\mathrm{Trd}$ be the reduced trace on $B$.
Let $(B,\sigma)_{+}$ be the $k$-vector space of $\sigma$-symmetric elements of $B$. Let $Q_\sigma$ be the quadratic form on $(B,\sigma)_{+}$
defined by $$Q_{\sigma}(x,y)=\mathrm{Trd}(xy).
$$
Let us recall some  results in  \cite{HKRT}.

\begin{lemma}\label{lem_quad} Assume that $k$ is not of characteristic $2$.
Let $B$ be a central simple $K$-algebra of odd degree $n=2m-1$ with involution $\sigma$ of the second kind. There is a quadratic form $q_{\sigma}$ of
dimension $n(n-1)/2$ and trivial  discriminant over $k$ such that
$$Q_\sigma\simeq \langle 1\rangle\perp\langle 2\rangle\cdot\langle\langle\alpha\rangle\rangle\otimes q_{\sigma}.$$
\end{lemma}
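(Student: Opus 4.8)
The plan is to treat this as a reformulation of the trace‑form computation of Haile--Knus--Rost--Tignol: the cleanest route, given that the paragraph announces that we are recalling results of \cite{HKRT}, is to match conventions and quote their analysis, but the underlying mechanism I would spell out is base change to the centre $k'$ followed by Galois descent. Throughout write $K=k'$ for the centre, let $d=[k']\in k^\times/k^{\times2}$ be the discriminant of $k'$, and fix $\alpha\in k^\times$ with $k'=k[\sqrt{\alpha}]$ (available since $\mathrm{char}\,k\neq 2$), so that $\langle\langle\alpha\rangle\rangle=n_{k'/k}$ is the norm form of $k'/k$.

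The first step is base change. Because $\sigma$ is of the second kind it acts on $k'$ by $\sqrt{\alpha}\mapsto-\sqrt{\alpha}$, whence $\mathrm{Skew}(B,\sigma)=\sqrt{\alpha}\,(B,\sigma)_+$ and multiplication gives a $k'$-linear isomorphism $(B,\sigma)_+\otimes_k k'\xrightarrow{\sim}B$. For symmetric $e_i,e_j$ one has $\sigma(\mathrm{Trd}(e_ie_j))=\mathrm{Trd}(\sigma(e_j)\sigma(e_i))=\mathrm{Trd}(e_je_i)=\mathrm{Trd}(e_ie_j)$, so the Gram entries $\mathrm{Trd}(e_ie_j)$ already lie in $k$; hence in a fixed $k$-basis of $(B,\sigma)_+$ the Gram matrix of $Q_\sigma$ equals that of the reduced trace form $T_B\colon x\mapsto\mathrm{Trd}_{B/k'}(x^2)$ of $B$ viewed over $k'$. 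Thus $Q_\sigma\otimes_k k'\simeq T_B$. Over $k'$ the Pfister factor $\langle\langle\alpha\rangle\rangle$ dies, so the asserted isometry forces, and is essentially equivalent to, a statement purely about the Witt class of $T_B$ relative to the unit form; conversely, since $\langle\langle\alpha\rangle\rangle\,W(k)=\ker\bigl(W(k)\to W(k')\bigr)$, once the remainder $Q_\sigma\ominus\langle 1\rangle$ is shown to vanish over $k'$ it is automatically $\langle\langle\alpha\rangle\rangle$-divisible over $k$, and writing that divisible part as $\langle 2\rangle\langle\langle\alpha\rangle\rangle\otimes q_\sigma$ (the scalar $\langle 2\rangle$ being the normalisation already visible in the split model $B=M_n(k')$, where the off‑diagonal entries contribute $2\,n_{k'/k}(x_{ij})$) produces the sought $q_\sigma$.

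The heart of the matter — and the step I expect to be the genuine obstacle — is the fine structure over $k$: one must isolate the single unit summand $\langle 1\rangle$ (coming from the identity together with a canonical étale subalgebra of $(B,\sigma)_+$), verify that everything orthogonal to it becomes hyperbolic after extension to $k'$, and then pin the residual form to have exactly dimension $n(n-1)/2$ with trivial signed discriminant. This is where the hypothesis that $n=2m-1$ is odd is used twice over: it lets the unit part split off cleanly, and it makes the $2$-primary contribution of the Brauer class $[B]$ vanish (its order divides the odd integer $n$, so its image in ${_2\mathrm{Br}}(k')$ is trivial), which is what controls the discriminant and Hasse--Witt invariant of the remaining factor. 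Carrying out this discriminant and dimension bookkeeping rigorously is delicate, and since it is precisely the content of the odd‑degree trace‑form analysis of \cite{HKRT}, the economical conclusion is to invoke their result for the exact normalisation of $q_\sigma$ once the reduction $Q_\sigma\otimes_k k'\simeq T_B$ above has been recorded.
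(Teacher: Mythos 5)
The paper's entire proof of this lemma is the single line ``We refer to Proposition 4 in \cite{HKRT}'', and since you also end by invoking that same proposition for the decisive step, your route is in substance the paper's route. Your two preparatory observations are correct and are indeed the mechanism behind the HKRT argument: the Gram entries $\mathrm{Trd}(e_ie_j)$ lie in $k$, multiplication gives $(B,\sigma)_+\otimes_k k'\simeq B$, hence $Q_\sigma\otimes_k k'$ is the reduced trace form $T_B$ of $B$ over $k'$; and $\ker\bigl(W(k)\to W(k')\bigr)=\langle\langle\alpha\rangle\rangle\,W(k)$.

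However, two of your intermediate assertions fail a dimension and Witt-class check, and the failure reveals a misprint in the statement you were asked to prove. One has $\dim_k(B,\sigma)_+=n^2$, whereas $\langle 1\rangle\perp\langle 2\rangle\cdot\langle\langle\alpha\rangle\rangle\otimes q_\sigma$ with $\dim q_\sigma=n(n-1)/2$ has dimension $1+n(n-1)=n^2-n+1$; so as written the two forms cannot be isometric for $n>1$. The correct normalisation --- which is what Proposition 4 of \cite{HKRT} actually gives, and what the paper itself uses later when it writes $Q_{\tau_h}=3\langle 1\rangle\perp\langle 2\rangle\cdot\langle\langle d\rangle\rangle\otimes\langle-b,-c,bc\rangle$ for $n=3$ in the proof of Proposition \ref{prop_criterion} --- is $Q_\sigma\simeq n\langle 1\rangle\perp\langle 2\rangle\cdot\langle\langle\alpha\rangle\rangle\otimes q_\sigma$. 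Correspondingly, your claim that everything orthogonal to a \emph{single} unit summand becomes hyperbolic over $k'$ is false: over $k'$ the form $T_B$ is Witt-equivalent to $n\langle 1\rangle$, not to $\langle 1\rangle$ (already for $B=M_3(\C)$ over $\R$ the summand $3\langle 1\rangle$ is anisotropic). It is $Q_\sigma\ominus n\langle 1\rangle$ that lies in $\ker\bigl(W(k)\to W(k')\bigr)$, and the unit part $n\langle 1\rangle$ is split off by the diagonal (a maximal \'etale subalgebra of $(B,\sigma)_+$), as your own split-model computation of the off-diagonal contributions $2\delta_i\delta_j^{-1}n_{k'/k}(x_{ij})$ already shows. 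With that correction your outline matches the HKRT argument; as written it attempts to establish an isometry between forms of different dimensions.
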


\begin{proof}
We refer to Proposition 4 in \cite{HKRT}.
\end{proof}

\begin{theorem}\label{theo_isometric} Assume that $k$ is not of characteristic $2$ or $3$.
Let $\sigma$, $\tau$ be involutions of the second kind on a central simple algebra $B$ of degree $3$.
 Then $\sigma$ and $\tau$ are isomorphic if and only if
$Q_{\sigma}$ and $Q_{\tau}$ are isometric.
\end{theorem}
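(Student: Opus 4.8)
I would establish the two implications separately, doing the forward one by hand and reducing the converse to the degree-three analysis of \cite{HKRT} via Lemma \ref{lem_quad}. For the implication ``$\sigma\cong\tau\Rightarrow Q_\sigma\simeq Q_\tau$'', suppose $\phi$ is an automorphism of $B$ with $\tau=\phi\circ\sigma\circ\phi^{-1}$. Then for $x\in(B,\sigma)_+$ one has $\tau(\phi(x))=\phi(\sigma(x))=\phi(x)$, so $\phi$ carries $(B,\sigma)_+$ into $(B,\tau)_+$; since $\phi$ respects the reduced trace and $\mathrm{Trd}(x^2)\in k$ for symmetric $x$, we get $Q_\tau(\phi(x))=\mathrm{Trd}(\phi(x^2))=\mathrm{Trd}(x^2)=Q_\sigma(x)$. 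Thus $\phi$ restricts to an isometry $(B,\sigma)_+\simlgr(B,\tau)_+$ and $Q_\sigma\simeq Q_\tau$.

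For the converse the plan is to isolate the ``core'' quadratic form furnished by Lemma \ref{lem_quad} and then to invoke the fact that, in degree three, this core form is a complete invariant of the involution. Since $\sigma$ and $\tau$ share the centre $K$, the scalar $\alpha$ (a representative of the discriminant of $K/k$) appearing in Lemma \ref{lem_quad} is common to both, so the hypothesis $Q_\sigma\simeq Q_\tau$ reads $\langle 1\rangle\perp\langle 2\rangle\cdot\langle\langle\alpha\rangle\rangle\otimes q_\sigma\simeq\langle 1\rangle\perp\langle 2\rangle\cdot\langle\langle\alpha\rangle\rangle\otimes q_\tau$. Cancelling the summand $\langle 1\rangle$ by Witt's theorem and removing the common scalar factor $\langle 2\rangle$, I would obtain $\langle\langle\alpha\rangle\rangle\otimes q_\sigma\simeq\langle\langle\alpha\rangle\rangle\otimes q_\tau$, where $q_\sigma$ and $q_\tau$ are three-dimensional of trivial discriminant.

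It then remains to recover the involution from $q_\sigma$. Here I would use the dictionary between involutions of the second kind on $B$ (up to isomorphism) and rank-three hermitian forms for $K/k$ (up to similarity): writing $B=\End_K(E)$ with $\sigma$ adjoint to a hermitian form $h_\sigma$, the core form $q_\sigma$ is exactly the trace-form invariant attached to $h_\sigma$ in \cite{HKRT}, and $\sigma\cong\tau$ is equivalent to $h_\sigma$ and $h_\tau$ being similar. The main obstacle is precisely this final step, and it is more than the cancellation above delivers: that cancellation only shows that $q_\sigma$ and $q_\tau$ agree modulo the ideal generated by $\langle\langle\alpha\rangle\rangle$, i.e. become isometric over $K$, which is not formally enough to force an isometry over $k$. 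Upgrading this to a genuine isometry of the core forms (equivalently, to similarity of the hermitian forms, in the spirit of Jacobson's classification of hermitian forms by their transfer) is where the special degree-three computations of Haile--Knus--Rost--Tignol are indispensable, and I would quote \cite{HKRT} for this completeness.
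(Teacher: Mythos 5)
Your proposal is correct and takes essentially the same route as the paper: the paper's entire proof of this statement is a citation of Theorem 15 of \cite{HKRT}, which is precisely the result you invoke for the converse (the genuinely hard, degree-three-specific direction), and your hand verification of the forward implication together with the Witt-cancellation preliminaries are routine additions. One minor caveat: your heuristic explanation of the converse via the dictionary with rank-three hermitian forms for $k'/k$ presupposes $B$ split, whereas the theorem (and Theorem 15 of \cite{HKRT}, which you ultimately quote) covers arbitrary $B$ of degree $3$.
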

\begin{proof}
We refer to Theorem 15 in \cite{HKRT}.
\end{proof}

Let $(B,\sigma)$ be as in Lemma \ref{lem_quad} with degree $B=3$ and assume that $6$ is invertible in $k$.
Let $b_0$, $c_0\in k^{\times}$ such that $q_\sigma\simeq\langle -b_0,-c_0,b_0 c_0 \rangle.$
Define $\pi(B,\sigma)$ to be the Pfister form $\langle\langle d, b_0, c_0\rangle\rangle$.
An involution $\sigma$ of the second kind is called \emph{distinguished} if $\pi(B,\sigma)$ is hyperbolic.
Let $(E,h)$ be a rank 3 hermitian form over $k'$ with trivial (hermitian) discriminant.
We can find $b$, $c\in k^{\times}$ such that $h\simeq\langle -b,-c,bc\rangle_{k'}.$

Now consider the special case where $(B,\sigma)=(\End_{k'}(E),\tau_h)$.
Then we have $q_{\tau_{h}}=\langle -b,-c,bc\rangle$ and $\pi(\End_{k'}(E),\tau_h)=\langle\langle d, b, c\rangle\rangle$,
 which is the norm form of the octonion $C(k',E,h)$. It is then possible to recover with that method at least the two following facts.

\begin{remarks} {\rm
 (a) Theorem \ref{GKR} for $G_2$, i.e. \cskip all possible type of tori occur in the split case.
Given a couple $(k',l)$, we can write the split octonion algebra $C$ as $C(k',E,h)$ for $E=(k')^3$
$h= \langle - 1, -  1, 1\rangle$.
First we note that $l$ can be embedded into $\End_{k'}(E)$ since $\End_{k'}(E)$ is split.
As $N_C$ is isotropic, we have $\tau_h$ is distinguished. By Corollary
 18 in \cite{HKRT}, every cubic \'etale algebra $l$ can
 be embedded as a subalgebra in $\End_{k'}(E)$ with its image in $(\End_{k'}(E),\tau_h)_{+}$. By Proposition \ref{prop_emb}.(2), there is
 an embedding $i:T \to G$ of type $[(k',l)]\in H^1(k, W_0)$.

 \smallskip

\smallskip (b) Corollary \ref{cor_disc}, that is the ``equal discriminant case'':  the discriminant algebra of $l$ is $k'$.
In this case  there is an embedding $i:T \to G$ of type $[(k',l)]$ if and only if $N_C$ is isotropic.
For a proof in the present setting, we
assume there is  an embedding $i:T \to G$ of type $[(k',l)]$. According to Proposition \ref{prop_emb}.(2),
there exists an $3$--hermitian form $(E,h)$  of trivial   determinant
such that $C \cong C(k',E,h)$ and   an embedding $l\underset{k}{\otimes} k'\to \End_{k'}(E)$ with respect
to the conjugacy involution on $k'$ and the involution $\tau_h$ attached to $h$. Then $(\End_{k'}(E),\tau_h)_+$
contains a cubic \'etale algebra isomorphic to $l$ whose discriminant is $d$. By Theorem 16.(e) in \cite{HKRT}, we
have $\pi(\End_{k'}(E),\tau_h)=N_C$ is isotropic. Thus $C$ is split.
}
\end{remarks}

\medskip

\begin{proposition}\label{prop_criterion} Assume that $k$ is not of characteristic $2,3$.
Let $G =\Aut(C)$ be a semisimple $k$--group of type $G_2$.
 Let $k'$ (resp. $l$) be a quadratic (resp. cubic) \'etale $k$-algebra.
 Then there is a $k$-embedding $i:T \to G$ of type $[(k',l)]\in H^1(k, W_0)$ if and only
if the following two conditions both hold:
\smallskip
\item[(i)] There is a rank 3 $k'/k$-hermitian form $(E,h)$ of trivial (hermitian) discriminant such that $C\simeq C(k', E,h)$.
\item[(ii)] Let $b,c\in k^{\times}$ such that $\langle-b,-c,bc\rangle_{k'}$ is isometric to the form $h$ in (i).
 Then there is $\lambda\in l^\times$ such that $N_{l/k}(\lambda)\in k^{\times^{2}}$ and the $k$-quadratic form
 $\langle\langle d \rangle\rangle\otimes \langle\delta\rangle \cdot t_{l/k}(\langle\lambda\rangle)$ is isometric to
 $\langle\langle d\rangle\rangle\otimes \langle -b,-c,bc\rangle$, where $t_{l/k}$ denotes the Scharlau transfer
with respect to the trace map $\mathrm{tr}: l \to k$.
\end{proposition}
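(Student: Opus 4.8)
The plan is to derive the criterion by feeding the hermitian reformulation of Proposition \ref{prop_emb} into the quadratic-form dictionary of Knus--Haile--Rost--Tignol recorded in Lemma \ref{lem_quad} and Theorem \ref{theo_isometric}. First I would invoke Proposition \ref{prop_emb}: a $k$-embedding $i:T \to G$ of type $[(k',l)]$ exists if and only if there is a rank $3$ hermitian form $(E,h)$ for $k'/k$ of trivial discriminant with $C \cong C(k',E,h)$ together with an element $\lambda \in l^\times$ such that $(l\otimes_k k', t'_\lambda)\simeq(E,h)$. The first half is exactly condition (i); moreover, as explained before Lemma \ref{lem_subgroup} through the Skolem--Noether property, once $C$ and $k'$ are fixed the form $h$ is determined up to isometry, so the scalars $b,c$ with $h\simeq\langle -b,-c,bc\rangle_{k'}$ occurring in (ii) are well defined. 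It then remains to convert the hermitian isometry $(l\otimes_k k', t'_\lambda)\simeq(E,h)$ into the quadratic statement (ii).

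Since $E$ is free of rank $3$ over $k'$, both $\End_{k'}(E)$ and $\End_{k'}(l\otimes_k k')$ are isomorphic to $M_3(k')$, and the hermitian isometry amounts to an isomorphism between the adjoint involutions of the second kind $\tau_h$ and $\tau_{t'_\lambda}$. By Theorem \ref{theo_isometric} this is equivalent to the isometry $Q_{\tau_{t'_\lambda}}\simeq Q_{\tau_h}$. Applying Lemma \ref{lem_quad} to both sides (with $\alpha=d$ the discriminant of $k'$), cancelling the common summand $\langle 1\rangle$ by Witt's theorem and absorbing the scalar $\langle 2\rangle$, this reduces to
\[
\langle\langle d\rangle\rangle \otimes q_{\tau_{t'_\lambda}} \;\simeq\; \langle\langle d\rangle\rangle \otimes \langle -b,-c,bc\rangle,
\]
where I have used the identity $q_{\tau_h}=\langle -b,-c,bc\rangle$ recorded just after Theorem \ref{theo_isometric}.

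The decisive point is the identification of the three-dimensional trivial-discriminant form $q_{\tau_{t'_\lambda}}$. Writing $k'=k\oplus k\omega$ with $\omega^2=d$ and decomposing $x=x_0+x_1\omega$ with $x_j\in l$, a direct trace computation gives $t'_\lambda(x,x)=\mathrm{tr}_{l/k}(\lambda x_0^2)-d\,\mathrm{tr}_{l/k}(\lambda x_1^2)$, so the underlying $k$-diagonal of $t'_\lambda$ agrees with the Scharlau transfer $t_{l/k}(\langle\lambda\rangle)$ up to the factor $\langle\langle d\rangle\rangle$. As $\det t_{l/k}(\langle\lambda\rangle)=N_{l/k}(\lambda)\,\delta$ modulo squares, the trivial-discriminant normalisation demanded by Lemma \ref{lem_quad} forces $q_{\tau_{t'_\lambda}}=\langle\delta\rangle\, t_{l/k}(\langle\lambda\rangle)$ precisely when $N_{l/k}(\lambda)\in k^{\times 2}$; this is exactly where the twist $\langle\delta\rangle$ and the norm condition of (ii) originate. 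Substituting into the displayed isometry yields condition (ii), and since every step is reversible one obtains the converse as well.

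I expect this last step to be the main obstacle. Pinning down $q_{\tau_{t'_\lambda}}$ requires carefully separating the hermitian discriminant of $t'_\lambda$ (an invariant in $k^\times/N_{k'/k}(k'^\times)$) from the $k$-quadratic invariant extracted by Lemma \ref{lem_quad}, and reconciling the isomorphism of involutions furnished by Theorem \ref{theo_isometric}, which a priori only encodes a \emph{similarity} of hermitian forms, with the genuine isometry required in Proposition \ref{prop_emb}; the scaling freedom $\lambda\mapsto\mu\lambda$, which multiplies $t'_\lambda$ by $\mu\in k^\times$, is what bridges the two, and the restriction $N_{l/k}(\lambda)\in k^{\times 2}$ is what keeps this rescaling compatible with the trivial-discriminant hypothesis throughout.
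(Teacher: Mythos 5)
Your overall architecture coincides with the paper's: reduce to the hermitian reformulation of Proposition \ref{prop_emb} and then translate into quadratic forms via Lemma \ref{lem_quad} and Theorem \ref{theo_isometric}. The genuine gap is at the very step you flag as decisive, the identification $q_{\tau_{t'_\lambda}}\simeq\langle\delta\rangle\cdot t_{l/k}(\langle\lambda\rangle)$. Your ``direct trace computation'' computes the \emph{trace form} of the hermitian module $(l\otimes_k k',t'_\lambda)$, i.e.\ the $6$-dimensional $k$-quadratic form $x\mapsto t'_\lambda(x,x)$ on $l\otimes_k k'$, and correctly finds $\langle\langle d\rangle\rangle\otimes t_{l/k}(\langle\lambda\rangle)$. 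But $q_{\tau_{t'_\lambda}}$ is extracted from the $9$-dimensional form $Q_\sigma(x)=\mathrm{Trd}(x^2)$ on the $\sigma$-symmetric elements of $\End_{k'}(l\otimes_k k')$; it is not the trace form of the hermitian form, and discriminant bookkeeping cannot pin down a $3$-dimensional form. Note that your computation produces $t_{l/k}(\langle\lambda\rangle)$ with \emph{no} factor $\langle\delta\rangle$ and no constraint on $N_{l/k}(\lambda)$, whereas the $\lambda$ furnished by Proposition \ref{prop_emb}(iii) is only constrained modulo $N_{k'/k}(k'^{\times})$ by triviality of the hermitian discriminant, which does not yield $N_{l/k}(\lambda)\in k^{\times 2}$. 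So the precise shape of condition (ii) is not reached by your argument.

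The paper closes exactly this gap by citing the relevant results of \cite{HKRT}: Corollary 12 there produces, for an involution of the second kind on a degree-$3$ algebra leaving the cubic \'etale algebra $l$ elementwise invariant, an element $\lambda\in l^\times$ with $N_{l/k}(\lambda)\in k^{\times 2}$ and $q_\sigma\simeq\langle\delta\rangle\cdot t_{l/k}(\langle\lambda\rangle)$ --- this is where both the twist by $\langle\delta\rangle$ and the norm-square condition actually come from; in the converse direction Corollary 14 constructs, for any such $\lambda$, an involution fixing $l$ elementwise with the prescribed $Q_\sigma$, after which Theorem \ref{theo_isometric} identifies it with $\tau_h$ and hence embeds $l$ into $(M_3(k'),\tau_h)_+$. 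Your converse (``every step is reversible'') hides the same difficulty: one must \emph{construct} an involution with prescribed restriction to $l$ and prescribed $Q_\sigma$, not merely undo isometries. Finally, your proposed fix for the similarity-versus-isometry issue --- rescaling $\lambda\mapsto\mu\lambda$ with $\mu\in k^\times$ --- changes $N_{l/k}(\lambda)$ by $\mu^3\equiv\mu$ modulo squares and so destroys the condition $N_{l/k}(\lambda)\in k^{\times 2}$ unless $\mu$ is a square; the correct observation is that a similarity factor between rank-$3$ hermitian forms of equal discriminant is automatically a norm from $k'$ and therefore harmless.
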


\begin{proof}
Suppose that there is a $k$-embedding $i:T \to G$ of type $[(k',l)]\in H^1(k, W_0)$.
By Proposition~\ref{prop_emb} (2), there  is a rank 3 $k'/k$-hermitian form $(E,h)$ such
 that $C\simeq C(k',E,h)$,
and there exists an embedding $\iota: k' \otimes_k l \to \End_{k'}(E)$ with respect to the
 conjugacy involution on $k'$
and the involution $\tau_h$ attached to $h$.
By \cite[Corollary 12]{HKRT}, we can find $\lambda\in l^\times$ such
 that $N_{l/k}(\lambda)\in k^{\times^{2}}$ and the $q_{\tau_{h}}$ in Lemma~\ref{lem_quad} is the
$k$-quadratic form
 $\langle\delta\rangle \cdot t_{l/k}(\langle\lambda\rangle)$.
Since $Q_{\tau_{h}}=3\langle 1\rangle\perp \langle 2\rangle\cdot\langle\langle d\rangle\rangle\otimes \langle -b,-c,bc\rangle$,
the condition $(ii)$ follows from the Witt cancellation.

Conversely, suppose that $(i)$ and $(ii)$ hold.
By Proposition~\ref{prop_emb} (2), it suffices to prove that there is a $k$-embedding of $l$
into $(M_3(k'),\tau_h)_{+}$.
Note that every cubic \'etale $k$--algebra $l$  can be embedded into $M_3(k')$ as $k$-algebras.
By Corollary 14 in \cite{HKRT}, for every $\lambda\in l^{\times}$ such that $N_{l/k}(\lambda)\in k^{\times^{2}}$, there is an involution $\sigma$
 of the second kind  on $M_{3}(k')$ leaving $l$ elementwise invariant such that $$Q_\sigma=
\langle 1,1,1 \rangle\perp\langle 2\rangle\cdot\langle\langle d\rangle\rangle\otimes \langle \delta\rangle\cdot t_{l/k}(\langle\lambda\rangle).$$
Condition $(ii)$ implies that we can choose $\lambda$ so that $Q_\sigma$ and $Q_{\tau_{h}}$ are
 isometric. By Theorem \ref{theo_isometric}, the
 involutions $\sigma$ and $\tau_h$ are isomorphic and hence there is a $k$-embedding of $l$
into $(M_3(k'),\tau_h)_{+}$.
\end{proof}

\section{Hasse principle}\label{sec_hasse}

We assume that the base field $k$ is a number field.

\begin{sproposition}\label{prop_hass} Let $(k',l)$ be a couple where $k'$ is a quadratic \'etale $k$--algebra
and $l/k$  cubic \'etale $k$--algebra. Let $G$ be a semisimple  $k$--group of type
$G_2$ and let $X$ be the $G$--homogeneous space
of the embeddings of maximal tori with respect to the type $[(k',l)]$.
Then $X$ satisfies the Hasse principle.
\end{sproposition}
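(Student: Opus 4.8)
The plan is to establish the local--global principle by assuming $X(k_v)\neq\emptyset$ for every place $v$ of $k$ and producing a $k$-point, reducing in each case to the explicit criteria of Sections~\ref{sec_embed}--\ref{sec_hermitian} and to classical Hasse principles for forms over the number field $k$. The argument divides according to whether the cubic \'etale algebra $l$ is a field, the substance being concentrated in the cubic field case.

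First I would treat the case where $l$ is not a field. When $l=k\times k\times k$, Proposition~\ref{prop_basic}.(2) identifies $X(k)\neq\emptyset$ with the splitting of $C_{k'}$, and when $l=k\times l_2$ with $l_2$ quadratic \'etale, Proposition~\ref{prop_biquad} identifies it with the simultaneous splitting of $C_{k_1}$ and $C_{k_2}$ for the quadratic \'etale algebras $k_1,k_2$ attached to $(k',l)$. In every such instance the condition is the splitting of an octonion algebra over a quadratic \'etale extension, equivalently the hyperbolicity of a $3$-fold Pfister form. A place $w$ of such an algebra $k_j$ lies over a place $v$ of $k$ and $k_j\otimes_k k_v=\prod_{w\mid v}(k_j)_w$, so the hypothesis, fed into the same criterion over each $k_v$, forces $C_{k_j}$ to split at every completion of $k_j$; by Hasse--Minkowski a Pfister form hyperbolic at all places is hyperbolic, so $C_{k_j}$ splits over $k_j$ and the criterion yields $X(k)\neq\emptyset$.

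It remains to handle the case in which $l$ is a cubic field, which is the core. Proposition~\ref{prop_basic}.(1) shows that $C_{k'}$ must be split, and this holds globally by the Hasse--Minkowski argument just used, so we may fix a writing $C\cong C(k',E,h)$ for a rank~$3$ hermitian form $(E,h)$ of trivial discriminant. By Proposition~\ref{prop_emb}, $X(k)\neq\emptyset$ is then equivalent to the existence of a $k'$-embedding of $k'\otimes_k l$ into $\End_{k'}(E)$ intertwining the conjugation of $k'\otimes_k l$ with the involution $\tau_h$; equivalently, by Proposition~\ref{prop_criterion}, to the existence of $\lambda\in l^\times$ with $N_{l/k}(\lambda)$ a square realizing a prescribed isometry of $6$-dimensional $k$-quadratic forms. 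Here the Hasse principle for hermitian forms over $k'/k$ should ensure that the local forms supplied by the hypothesis are the localizations of a single global $(E,h)$ compatible with $C$, so that the local embeddings are all into the same algebra with involution $(\End_{k'}(E),\tau_h)$.

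The hard part will be the passage from these local embeddings to a global one, that is, the Hasse principle for embeddings of the \'etale algebra with involution $(k'\otimes_k l,\ \sigma\otimes\mathrm{id})$ into the central simple algebra with involution of the second kind $(\End_{k'}(E),\tau_h)$. I would deduce it from the local--global principle for embeddings of \'etale algebras with involution into central simple algebras with involution of the second kind over number fields, as established by Prasad--Rapinchuk \cite{PR1,PR2} and in the precise form needed here through the embedding criterion of \cite{BLP}; the underlying group $J=\SU(k',E,h)$ being simply connected of type $A_2$, no obstruction survives beyond the local conditions. The compatible family of local embeddings then produces a global one, hence the required $\lambda\in l^\times$ and a $k$-embedding $i\colon T\to G$ of type $[(k',l)]$. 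I expect this globalization---first fixing the hermitian form by the Hasse principle for hermitian forms, then patching the involution-preserving embeddings---to be the sole genuine difficulty, the cases with $l$ non-field reducing directly to Hasse--Minkowski.
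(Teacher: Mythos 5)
Your proof is essentially correct, but it follows a genuinely different route from the one in the paper. The paper's argument is short and uniform: since $X$ is a homogeneous space under the simply connected group $G$ with geometric stabilizer the torus $T^{(k',l)}$, the only obstruction to the Hasse principle is the Borovoi obstruction $\gamma\in\Sha^2\bigl(k,T^{(k',l)}\bigr)$ (\cite{Le}, prop.~2.8); one disposes of the split case by Theorem~\ref{GKR}, and otherwise $G$ is anisotropic at some real place $v$, the local hypothesis forces $T^{(k',l)}$ to be $k_v$--anisotropic, and Kneser's lemma \cite[lemme 1.9.3]{Sa} then gives $\Sha^2\bigl(k,T^{(k',l)}\bigr)=0$. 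You instead push the problem through the explicit criteria of Sections~\ref{sec_embed} and~\ref{sec_hermitian}: Hasse--Minkowski for the $3$-fold Pfister form $N_C$ over quadratic \'etale extensions settles the cases where $l$ is not a field (via Propositions~\ref{prop_basic} and~\ref{prop_biquad}), and the cubic field case is reduced, via Proposition~\ref{prop_emb} and the uniqueness up to isometry of the hermitian form $(E,h)$ attached to a fixed copy of $k'$ in $C$, to the local--global principle for embeddings of $(k'\otimes_k l,\sigma\otimes\mathrm{id})$ into $(\End_{k'}(E),\tau_h)$, which holds by Prasad--Rapinchuk for involutions of the second kind (note that $k'\otimes_k l$ is a field here since $2$ and $3$ are coprime, so even the field case of their theorem suffices). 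What the paper's route buys is brevity and independence from the classification machinery of Section~\ref{sec_hermitian}; what yours buys is an explicit, case-by-case verification that also localizes exactly where the arithmetic input enters (Hasse--Minkowski versus the unitary embedding theorem).

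One point needs repair. Your justification ``the underlying group $J=\SU(k',E,h)$ being simply connected of type $A_2$, no obstruction survives beyond the local conditions'' is not valid as stated: simple connectedness alone does not kill the Brauer--Manin obstruction for a homogeneous space with toric stabilizer --- the obstruction still lives in $\Sha^2(k,T)$ for the relevant maximal torus, and it is precisely the content of \cite{PR2} (or, equivalently here, of Lemma~\ref{lem_seq}.(2) combined with Poitou--Tate duality and the self-duality of the $G_2$ root datum) that this group vanishes for odd-degree unitary involutions. Since you do cite \cite{PR2} and \cite{BLP} for the actual theorem, the argument goes through, but the parenthetical reason should be replaced by the correct one.
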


\begin{proof}
 Since $G_0$ is simply connected, we have $H^1(k_v,G_0)=1$ for each
finite place $v$ of $k$. The Hasse principle states that the map
$$
H^1(k,G_0 ) \simlgr \prod\limits_{v  \enskip \mathrm{real \, \, place}} H^1(k_v, G_0)
$$
is bijective. If $G$ is split, $X(k)$ is not empty (th. \ref{GKR}), so we may assume that $G$ is not split.
By \cite[prop. 2.8]{Le}, $X(k)$ is not empty iff the Borovoi obstruction $\gamma \in
 \Sha^2(k, T^{(k',l)} )$ vanishes.
There is a real place $v$ such that  $G_{k_v}$ is not split and then $k_v$-anisotropic.
Since there is a $k_v$--embedding of $T^{(k',l)}$ in $G_{k_v}$, the torus  $T^{(k',l)}$ is
$k_v$--anisotropic.
By a lemma due to Kneser \cite[lemme 1.9.3]{Sa},  we know that $ \Sha^2(k, T^{(k',l)} )=0$, so that
$\gamma=0$. Thus $X(k) \not = \emptyset$.
\end{proof}

\begin{sremark} {\rm  Under the hypothesis of Proposition \ref{prop_hass}, the existence of a $k$-point on $X$
 is controlled by Borovoi obstruction.
It follows from the restriction-corestriction principle in Galois cohomology that
$X$ has a $k$-point if and only  $X$ has a $0$--cycle of degree one.
In other words, examples like in Theorem \ref{theo_cycle} do not occur over number fields.
}
\end{sremark}

\begin{scorollary}\label{cor_hasse}
Let $k$ be a number field, and $k'$ (resp. $l$)  be  quadratic (resp. cubic) \'etale algebra over $k$.
Let $\delta\in k^{\times}/k^{\times^{2}}$ be the discriminant of $l$ and $d\in k^{\times}/k^{\times^{2}}$ be the discriminant of $k'$.
Let $\Sigma $ be the set of (real) places where $G$ is not split.
Then $T^{(k',l)}$ can be embedded in $G$ with respect to the type $[(k',l)]$ if and only if $d=- 1\in k_v^{\times}/k_v^{\times 2}$
and $\delta=1 \in k_v^{\times}/k_v^{\times 2}$ for each $v \in \Sigma$.
\end{scorollary}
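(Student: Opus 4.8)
The plan is to deduce the criterion from the Hasse principle established in Proposition~\ref{prop_hass}, reducing the global embedding question to a purely local one at each place of $k$. Recall that, by the definition of $X$ together with Lemma~\ref{lem_shape}.(1), the assertion that $T^{(k',l)}$ embeds in $G$ with type $[(k',l)]$ is exactly the statement $X(k)\neq\emptyset$. Proposition~\ref{prop_hass} gives $X(k)\neq\emptyset$ if and only if $X(k_v)\neq\emptyset$ for every place $v$ of $k$, so the whole problem is to decide local solvability place by place.

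First I would dispose of all places at which $G_{k_v}$ is split. Since $G$ is an inner form of the simply connected group $G_0$ of type $G_2$, one has $H^1(k_v,G_0)=1$ at every non-archimedean place $v$, so $G_{k_v}$ is split there; the same holds at every complex place and at every real place outside $\Sigma$. At each such $v$, Theorem~\ref{GKR} guarantees that every type is realized, hence $X(k_v)\neq\emptyset$ with no constraint on $(k',l)$. Thus only the real places $v\in\Sigma$, where $G_{k_v}$ is the anisotropic (compact) form, can impose conditions, and $X(k)\neq\emptyset$ if and only if $X(k_v)\neq\emptyset$ for every $v\in\Sigma$.

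The heart of the argument is the local analysis at a place $v\in\Sigma$, which is the situation of Remark~\ref{rem_real}. By Proposition~\ref{prop_basic}.(1), an embedding of type $[(k',l)]$ forces $G_{k'\otimes_k k_v}$ to be split; since the compact form over $k_v$ splits only after passing to $\CC$, this requires $k'\otimes_k k_v\cong\CC$, i.e.\ $d=-1$ in $k_v^\times/k_v^{\times 2}$. Assuming $d=-1$, the discriminant $\delta$ determines $l\otimes_k k_v$: one has $l\otimes_k k_v\cong k_v^3$ when $\delta=1$, and $l\otimes_k k_v\cong k_v\times\CC$ when $\delta=-1$. In the case $\delta=-1$ the cubic algebra $l\otimes_k k_v$ has discriminant $-1=d$, so this is the equal-discriminant situation, and Corollary~\ref{cor_disc} would force $C_{k_v}$ to split if such a torus existed, contradicting the anisotropy of $G_{k_v}$; hence $X(k_v)=\emptyset$. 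Conversely, when $\delta=1$ we have $l\otimes_k k_v\cong k_v^3$ and $G_{k'\otimes_k k_v}=G_{\CC}$ is split, so Proposition~\ref{prop_basic}.(2) produces an embedding of type $[(\CC,k_v^3)]$ and $X(k_v)\neq\emptyset$. Therefore, for $v\in\Sigma$, $X(k_v)\neq\emptyset$ if and only if $d=-1$ and $\delta=1$ in $k_v^\times/k_v^{\times 2}$, and assembling this over all $v\in\Sigma$ yields the stated criterion.

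The only genuinely delicate point is the real-place analysis of the previous paragraph, where one must rule out the equal-discriminant type over the compact $G_2$; this is exactly the phenomenon recorded in Remark~\ref{rem_real}, so no new computation is needed beyond matching the local \'etale algebras $k'\otimes_k k_v$ and $l\otimes_k k_v$ to the square-class invariants $d$ and $\delta$. Everything else is a formal application of the Hasse principle together with the splitting of $G$ at non-archimedean, complex, and non-$\Sigma$ real places.
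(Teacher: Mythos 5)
Your proof is correct and follows essentially the same route as the paper: reduce to a local question via Proposition~\ref{prop_hass}, dispose of the split places with Theorem~\ref{GKR}, and then analyze the real anisotropic places. Your explicit local analysis (Proposition~\ref{prop_basic}.(1) forcing $d=-1$, Corollary~\ref{cor_disc} excluding $(\CC,\R\times\CC)$, and Proposition~\ref{prop_basic}.(2) realizing $(\CC,\R^3)$) is precisely the content of Remark~\ref{rem_real}, which the paper's proof simply cites.
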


\begin{proof}
According to Proposition \ref{prop_hass},  $T^{(k',l)}$ can be embedded in $G$ with respect to the type $[(k',l)]$ iff
this holds everywhere locally or equivalently (by Theorem \ref{GKR}) iff
this holds  locally on $\Sigma$.
The problem boils down to the real anisotropic case where
the only type is $[(\C, \R^3)]$ according to Remark
\ref{rem_real}.
\end{proof}

\begin{sexamples} \label{ex_eff} {\rm
Keep the notations in Corollary~\ref{cor_hasse}.

\smallskip
\smallskip (a) Consider the special case where $k$ is the field of rational numbers $\QQ$.
Suppose that $G$ is anisotropic over $\QQ$. Since there is only one real place of $\QQ$, by Corollary~\ref{cor_hasse},
the torus $T^{(k',l)}$ can be embedded in $G$ with respect to type $[(k',l)]$ if and only if $k'$ is imaginary and the discriminant of $l$ is positive.

\smallskip
\smallskip (b) Let $k$ be a number field. Suppose that $G$ is anisotropic. Note that in this case, $k$ is a real extension over $\QQ$.
Let $k'$ be an imaginary field extension of $k$ and let the discriminant of $l$ be $[a]\in k^{\times}/k^{\times 2}$ for some positive $a\in\QQ$.
Then by Corollary~\ref{cor_hasse}, the torus $T^{(k',l)}$ can always be embedded in $G$ with respect to type $[(k',l)]$.
}
\end{sexamples}

\section{Appendix: Galois cohomology of tori and semisimple groups over Laurent series fields}\label{sec_tori}

This appendix provides firstly a reference for a well-known fact on the Galois cohomology of
tori  in the vein of the short exact sequence computing the tame Brauer group
of a Laurent series field. This fact is used in the proof of Lemma \ref{lem_residue}.
Secondly we apply our version of Steinberg's theorem to Bruhat-Tits' theory, answering a
question of A. Merkurjev.

We recall that an affine algebraic $k$--group $G$ is a $k$--torus
if there exists a finite Galois extension $k'/k$ such that
$G \times_k k' \simlgr (\GG_{m,k'})^r$.
If $T$ is a $k$--torus, we consider its Galois lattice  of characters
$\widehat T= \Hom_{k_s-gp}\bigl( T_{k_s} ,  \GG_{m,k_s} \bigr)$
and its  Galois lattice  of cocharacters
${\widehat T}^0= \Hom_{k_s-gp}\bigl( \GG_{m,k_s} , T_{k_s}  \bigr)$.

\begin{slemma}\label{lem_aniso} We put $K= k((t))$.
 Let $T/k$ be an algebraic $k$--torus. Then we have a natural split exact sequence
$$
0 \to H^1(k, T) \to H^1(K, T) \buildrel \partial \over  \lgr  H^1\bigl(k,{\widehat T}^0\bigr) \to 0.
$$
\end{slemma}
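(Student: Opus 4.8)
The plan is to compute $H^1(K,T)$ through the maximal unramified extension and to identify the ``residue'' map $\partial$ with the valuation, in exactly the same spirit as the computation of the tame Brauer group of $K$. Write $K^{\mathrm{nr}}=\bigcup_{L} L((t))$ for the maximal unramified extension of $K$, the union running over the finite separable subextensions $L/k$ inside $k_s$; its residue field is $k_s$ and $\Gal(K^{\mathrm{nr}}/K)\cong \Gamma_k$. Since $T$ is defined over $k$ and splits over some finite $L/k$, it splits over $K^{\mathrm{nr}}$, so Hilbert~90 over the field $K^{\mathrm{nr}}$ gives $H^1(K^{\mathrm{nr}},T)=0$. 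The Hochschild--Serre spectral sequence for $\Gal(K_s/K^{\mathrm{nr}})\lhd \Gamma_K$ then collapses in low degree to a canonical isomorphism $H^1(K,T)\cong H^1\bigl(\Gamma_k, T(K^{\mathrm{nr}})\bigr)$, the edge map being inflation.

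Next I would exploit the valuation. Evaluating characters and applying the valuation $v$ of $K^{\mathrm{nr}}$ produces a $\Gamma_k$-equivariant exact sequence
$$
0 \to T(\cO^{\mathrm{nr}}) \to T(K^{\mathrm{nr}}) \xrightarrow{\ v\ } \widehat T^0 \to 0,
$$
where $\cO^{\mathrm{nr}}$ is the valuation ring and surjectivity uses that $k_s$ is separably closed. Passing to $\Gamma_k$-cohomology and observing that $T(K)\to (\widehat T^0)^{\Gamma_k}$ is onto (a $k$-rational cocharacter $\lambda$ lifts to $\lambda(t)\in T(K)$ with $v(\lambda(t))=\lambda$), the first connecting map vanishes and I obtain the left-exact sequence
$$
0 \to H^1\bigl(\Gamma_k, T(\cO^{\mathrm{nr}})\bigr) \to H^1(K,T) \xrightarrow{\ v_*\ } H^1(k,\widehat T^0).
$$

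It then remains to identify $H^1(\Gamma_k, T(\cO^{\mathrm{nr}}))$ with $H^1(k,T)$ and to split $v_*$. For the first point I would use the reduction sequence $1\to \widehat T^0\otimes U^{(1)} \to T(\cO^{\mathrm{nr}}) \to T(k_s)\to 1$, where $U^{(1)}=1+\mathfrak m^{\mathrm{nr}}$; filtering $U^{(1)}$ by the higher unit groups $U^{(n)}$ gives successive quotients $U^{(n)}/U^{(n+1)}\cong (k_s,+)$ with the natural (untwisted, since $K^{\mathrm{nr}}/K$ is unramified) Galois action, so each graded piece $\widehat T^0\otimes (k_s,+)$ is cohomologically trivial in positive degrees by the normal basis theorem (additive Hilbert~90). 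Hence $H^{\ge 1}(\Gamma_k, \widehat T^0\otimes U^{(1)})=0$, reduction induces $H^1(\Gamma_k,T(\cO^{\mathrm{nr}}))\xrightarrow{\sim}H^1(k,T)$, and the composite with the edge map is the natural restriction $H^1(k,T)\to H^1(K,T)$, which is the first map of the asserted sequence. For surjectivity and the splitting I would write down a section directly: using the canonical identification $T\cong \widehat T^0\otimes_{\ZZ}\GG_m$, the cup product with the uniformizer gives $s(\xi)=\mathrm{res}_{K/k}(\xi)\cup t$, and since $v(t)=1$ one checks $v_*\circ s=\mathrm{id}$. This is precisely the tame-symbol splitting, and it forces $v_*=\partial$ to be surjective, completing the split short exact sequence.

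The formal skeleton (Hochschild--Serre, the valuation sequence, and the cup-product section) is routine. The genuine technical point, and the step I would write out with care, is the vanishing $H^{\ge 1}(\Gamma_k,\widehat T^0\otimes U^{(1)})=0$: one must justify the passage from the finite quotients $U^{(1)}/U^{(n)}$ to $U^{(1)}$ while keeping the module discrete, and this is the only place where a positive-characteristic subtlety of the unit filtration could intervene.
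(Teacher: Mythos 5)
Your proof is correct and follows the same strategy as the paper's: reduce via Hilbert 90 to cohomology of the Galois group of an unramified extension, split off $\widehat T^0$ by the valuation, and kill the contribution of the principal units by filtering them with additively acyclic graded pieces; the explicit section by cup product with $t$ is the same splitting the paper obtains from the $\Gamma$-equivariant section $n\mapsto t^n$ of the valuation. The one substantive difference is that you work over the maximal unramified extension $K^{\mathrm{nr}}$ with the full profinite group $\Gamma_k$, whereas the paper fixes a finite Galois splitting extension $k'/k$ and works with $\Gamma=\Gal(k'/k)$ acting on $T(k'((t)))$. This is not merely cosmetic at the step you yourself flag: the valuation ring $\cO^{\mathrm{nr}}=\bigcup_L L[[t]]$ is \emph{not} complete for the unit filtration (a compatible system of truncations need not lie in any single $L[[t]]$), so the passage from the quotients $U^{(1)}/U^{(n)}$ to $U^{(1)}$ does not follow directly from completeness; you need to first write $H^1(\Gamma_k,-)$ as a colimit over open subgroups, reducing to the finite levels $\Gal(L/k)$ acting on $1+tL[[t]]$, where completeness does hold. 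The paper's choice of a finite splitting extension makes $k'[[t]]$ complete from the outset, so the limit argument (their reference \cite[6.3.2]{GS}) applies immediately; your version is equally valid once that colimit reduction is written out, and the issue is one of topology of $\cO^{\mathrm{nr}}$ rather than of positive characteristic.
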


\begin{proof} Let $k'$ be a Galois extension which splits $T$. We put $\Gamma=\Gal(k'/k)$ and  $K'=k'((t))$.
We have the exact sequence \cite[I.2.6.(b)]{Se}
$$
0 \to H^1(\Gamma, T(k')) \to H^1(k,T ) \to H^1(k',T).
$$
Since $T_{k'}$ is split, the theorem 90 of Hilbert shows that  $H^1(k',T)=0$, whence an isomorphism
$H^1(\Gamma, T(k')) \simlgr  H^1(k,T )$. Similarly, we have $H^1(\Gamma, T(K')) \simlgr  H^1(K,T )$.
We consider the ($\Gamma$--split) exact sequence
$$
1 \to (k'[[t]])^\times \to (K')^\times \to \ZZ \to 0
$$
induced by the valuation. Tensoring with $\widehat T^0$, we get a $\Gamma$-split exact sequence
$$
1 \to T\bigl( k'[[t]] \bigr) \to T(K') \to \widehat T^0 \to 1.
$$
It gives rise to a split exact sequence
$$
0 \to H^1\bigl( \Gamma,  T\bigl( k'[[t]] \bigr) \bigr)
\to H^1\bigl( \Gamma,  T(K') \bigr) \to H^1\bigl(\Gamma ,{\widehat T}^0\bigr) \to 0.
$$
Now we use the  filtration argument of \cite[6.3.1]{GS}
by putting $$U^j = \bigl\{ x \in k'[[t]]^\times \, \mid \, v_t(x-1) \geq j \bigr\}$$ for each $j \geq 0$.
 The $V^j={\widehat T}^0 \otimes  U^j$'s filter  $T\bigl( k'[[t]] \bigr)$
and each $V^{j}/V^{j+1} \cong {\widehat T}^0  \otimes_k k'$ is a $k'$--vector space
equipped with a semi-linear action, hence is $\Gamma$--acyclic\footnote{Speiser's lemma shows that
$V^{j}/V^{j+1}= E_j \otimes_k k'$ for
a $k$-vector space $E_j$
on which $\Gamma$ acts trivially.}
According to the limit fact
\cite[6.3.2]{GS}, we conclude that the specialization map
$H^1\bigl( \Gamma,  T\bigl( k'[[t]] \bigr) \bigr) \to H^1\bigl( \Gamma,  T( k') \bigr)$ is an isomorphism.
We have then a split exact sequence
$$
0 \to H^1\bigl( \Gamma,  T( k') \bigr)
\to H^1\bigl( \Gamma,  T(K') \bigr) \to H^1\bigl(\Gamma,{\widehat T}^0\bigr) \to 0.
$$
Since $H^1\bigl(k',{\widehat T}^0\bigr)=0$, we have
$H^1\bigl(\Gamma ,{\widehat T}^0\bigr) \simlgr  H^1\bigl(k,{\widehat T}^0\bigr)$ whence the desired exact sequence.
\end{proof}

\medskip

Now we relate Bruhat-Tits' theory and our version of Steinberg's theorem \ref{theo_steinberg}.
 Let $G'$ be a quasisplit semisimple $k$--group equipped with a maximal $k$-split subtorus $S'$. We denote by $W'$ the Weyl group
 of the  maximal torus $T'=C_{G'}(T')$ of $G'$.
 Put $K=k((t))$ and denote by $K_{nr}$ the maximal unramified closure of $K$.

\begin{sproposition} \label{prop_BT}
Let $E$ be a $G'_K$--torsor. Then the following are equivalent:

\medskip

(i)  $E(K_{nr}) \not=\emptyset$.

\smallskip

(ii) there exists a $k$--torus embedding $i_0: T_0 \to G'$ such that
 $[E]$ belongs to the image of $i_{0,*}: H^1(K,T_0) \to H^1(K,G)$.
\end{sproposition}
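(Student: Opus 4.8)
The plan is to prove the two implications separately. The content is in (i)$\Rightarrow$(ii), where I would feed the residual triviality of $E$ into the three results of Section~\ref{sec_image} — our form of Steinberg's theorem (Theorem~\ref{theo_steinberg}), Kottwitz's theorem (Theorem~\ref{GKR}) and the image comparison Proposition~\ref{prop_image} — together with one structural input from Bruhat--Tits theory. The implication (ii)$\Rightarrow$(i) is elementary: if $[E]=i_{0,*}[a]$ with $i_0\colon T_0\to G'$ a $k$-torus embedding and $[a]\in H^1(K,T_0)$, then $T_0$, being a $k$-torus, splits over a finite separable subextension of $k_s$ and hence over $K_{nr}$; by Hilbert~$90$ we get $H^1(K_{nr},T_0)=0$, so the restriction of $[a]$, and therefore of $[E]=i_{0,*}[a]$, to $K_{nr}$ is trivial, i.e. $E(K_{nr})\neq\emptyset$.

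For (i)$\Rightarrow$(ii) I would set $H={_E}G'_K$. Since $E$ is split over $K_{nr}$ and $G'$ already splits over $k_s\subseteq K_{nr}$, the inner form $H$ is split over $K_{nr}$. The step I expect to be the main obstacle, and which I would invoke as the single external input, is the Bruhat--Tits structure theory of such residually split groups: it provides a maximal $K$-torus $i\colon T\to H$ that is itself split over $K_{nr}$. As $T$ is split over the unramified extension $K_{nr}$, its cocharacter module is unramified, so $T=T_{0,K}$ for a $k$-torus $T_0$. Applying Theorem~\ref{theo_steinberg} over $K$ to $[E]$ and to $i\colon T\to H$ yields a $K$-embedding $j\colon T\to G'_K$ and a class $[a]\in H^1(K,T)$ with $j_*[a]=[E]$ and $\type_{can}(T,i)=\type_{id}(T,j)=:\gamma\in H^1(K,W')$.

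The reason for choosing $T$ split over $K_{nr}$ is that $\gamma$ is then unramified. Indeed, over $K_{nr}$ both $H$ and $T$ become split, so $i_{K_{nr}}$ realises a split maximal torus of a split group and $\gamma|_{K_{nr}}$ is trivial; since $W'$ is a finite \'etale $k$-group and $k[[t]]$ is Henselian with residue field $k$, the kernel of $H^1(K,W')\to H^1(K_{nr},W')$ is exactly the image of $H^1(k,W')$, whence $\gamma=\gamma_0|_K$ for some $\gamma_0\in H^1(k,W')$. By Theorem~\ref{GKR}(2) there is a $k$-embedding $i_0\colon T_0\to G'$ of oriented type $\gamma_0$ (the torus realising $\gamma_0$ over $k$ being precisely $T_0$). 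The base change $i_{0,K}$ then has oriented type $\gamma=\type_{id}(T,j)$, and Proposition~\ref{prop_image}, applied over $K$ to the two equally oriented embeddings $i_{0,K}$ and $j$ of $T$ into $G'_K$, gives $\mathrm{Im}((i_{0,K})_*)=\mathrm{Im}(j_*)$. As $[E]=j_*[a]$, I conclude that $[E]\in\mathrm{Im}\bigl(i_{0,*}\colon H^1(K,T_0)\to H^1(K,G')\bigr)$, which is (ii).

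Alternatively, one may package the Bruhat--Tits input as the statement that the unramified classes $\ker\bigl(H^1(K,G'_K)\to H^1(K_{nr},G'_K)\bigr)$ coincide with the image of $H^1(k,G')$; granting this, (i)$\Rightarrow$(ii) follows at once by applying Theorem~\ref{theo_steinberg} over $k$ to a representative $z\in Z^1(k,G')$ of $[E]$ and to a maximal $k$-torus of ${_z}G'$, the resulting $k$-torus embedding being the sought $i_0$. In either formulation the decisive point is that Steinberg's theorem converts the unramified (residual) triviality of $E$ into an honest embedding of a $k$-defined torus, and the only genuinely non-formal ingredient is the Bruhat--Tits existence of a maximal torus split over $K_{nr}$ (equivalently, of the residue-field reduction of $[E]$).
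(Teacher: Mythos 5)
Your proof is correct and follows essentially the same route as the paper: Bruhat--Tits theory supplies a maximal $K$-torus of ${_E}G'_K$ split over $K_{nr}$, hence of the form $T_{0,K}$, the oriented type descends to a class $\gamma_0\in H^1(k,W')$, and Theorem~\ref{GKR} plus Theorem~\ref{theo_steinberg} (with Proposition~\ref{prop_image} to match images) yield the embedding $i_0$; the converse via Hilbert~90 is identical. The only difference is that you make explicit the inflation--restriction argument for $W'$ and the appeal to Proposition~\ref{prop_image}, which the paper leaves implicit in its citation of Theorem~\ref{theo_steinberg}.
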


\begin{proof}
We denote by  $G/K={^EG'}$ the inner twist of $G'_K$ by $E$.

\smallskip

\noindent $(i) \Longrightarrow (ii):$
Then $G$ is split by the extension $K_{nr}/K$ and the technical
condition (DE) of Bruhat-Tits theory is satisfied \cite[prop. 5.1.6]{BT}.
It follows that $G$ admits a maximal $K$-torus  $j: T \to G$ which is split
over $K_{nr}$ ({\it ibid}, cor. 5.1.2).

In particular, there exists a $k$--torus $T_0$ such that $T=T_{0,K}$.
We consider now the  oriented type $\gamma= \type_{can}(T',j) \in H^1(K,W')$ provided by the action of
the absolute Galois group of $K$ on the root system $\Phi(G_{K_{s}}, j(T)_{K_s})$.
Since $T$ and $G$ are split by $K_{nr}$, it is given by the action of
$\Gal(K_{nr}/K) \cong \Gal(k_s/k)$  on the root system $\Phi(G_{K_{nr}}, j(T)_{K_{nr}})$
and  then defines a constant class $\gamma_0  \in H^1(k,W')$ such that $\gamma= (\gamma_0)_K$.

In the other hand,  by Kottwitz embedding theorem \ref{GKR}, there exists a $k$--embedding $i_0: T_0 \to G'$ of oriented type
$\gamma_0$. By  theorem \ref{theo_steinberg}, we conclude that
$[E]$ belongs to the image of $i_{0,*}: H^1(K,T_0) \to H^1(K,G')$.

\smallskip

\noindent $(ii) \Longrightarrow (i):$ We assume there is a $k$--embedding $i_0:T_0 \to G'$
such that
$[E]$ belongs to the image of $i_{0,*}: H^1(K,T_0) \to H^1(K,G')$.
Since $T_{0,K}$ is split by $K_{nr}$, the Hilbert 90 theorem shows  that $H^1(K_{nr},T_0)=0$, whence
$E(K_{nr})\not=\emptyset$.
\end{proof}

\begin{remarks}
\noindent (a) If $k$ is perfect, we have that $\mathrm{cd}(K_{nr})=1$ (Lang,
see \cite[th. 6.2.11]{GS}) so condition (i) is always satisfied according to Steinberg's theorem.

\smallskip

\noindent (b) If $k$ is not perfect, there exist examples when condition (i) is not satisfied, even
in the semisimple split simply connected case, see \cite[prop. 3 and Th. 1]{G1}.
\end{remarks}

\bigskip

\bigskip

\medskip

\end{document}